\newtheorem{theo}{Theorem}[section]
\newtheorem{lemm}[theo]{Lemma}
\newtheorem{rema}[theo]{Remark}
\newtheorem{prop}[theo]{Proposition}
\newtheorem{conj}[theo]{Conjecture}
\newcommand{\nn}{\nonumber}
\newcommand{\io}{\int_\Omega}
\begin{document}
\title{Global boundedness of solutions to a parabolic-parabolic chemotaxis system with local sensing in higher dimensions}
\author{
Kentaro Fujie\footnote{fujie@tohoku.ac.jp}\\
{\small Tohoku University,}\\
{\small Sendai, 980-8578, Japan}
\and
Takasi Senba\footnote{senba@fukuoka-u.ac.jp}\\
{\small Fukuoka University,}\\
{\small Fukuoka, 814-0180, Japan}
\medskip
}
\date{\small\today}
\maketitle
\begin{abstract}
This paper deals with classical solutions to the parabolic-parabolic system
\begin{align*}
\begin{cases}
	u_t=\Delta (\gamma (v) u )
		&\mathrm{in}\ \Omega\times(0,\infty), \\[1mm]
	v_t=\Delta v - v + u
	&\mathrm{in}\ \Omega\times(0,\infty), \\[1mm]
\displaystyle	\frac{\partial u}{\partial \nu} = \frac{\partial v}{\partial \nu} = 0 
	&\mathrm{on}\ \partial\Omega \times (0,\infty), \\[1mm]
	u(\cdot,0)=u_0, \ v(\cdot,0)=v_0 &\mathrm{in}\ \Omega,
\end{cases}
\end{align*}
where $\Omega$ is a smooth bounded domain in $\mathbf{R}^n$($n \geq 3$), $\gamma (v)=v^{-k}$ ($k>0$) and the initial data $(u_0,v_0)$ is positive and regular.  
This system has striking features similar to those of the logarithmic Keller--Segel system. 
It is established that classical solutions of the system exist globally in time and remain uniformly bounded in time if $k \in (0,n/(n-2))$, independently the magnitude of mass.  
This constant $n/(n-2)$ is conjectured as the optimal range guaranteeing global existence and boundedness in the corresponding logarithmic Keller--Segel system. 
We will derive sufficient estimates for solutions through some single evolution equation that some auxiliary function satisfies. 
The cornerstone of the analysis is the refined comparison estimate for solutions, which enables us to control the nonlinearity of the auxiliary equation. \\ \\
 {\bf Key words:} global existence; uniform boundedness; chemotaxis \\
 {\bf AMS Classification:} 35B45, 35K57, 35Q92, 92C17.
\end{abstract}
\newpage
%
%
%
\section{Introduction}\label{section_introduction}
We consider the following fully parabolic system:
 \begin{align}
\label{eqn:BaseMod}
\begin{cases}
	u_t=\Delta (\gamma (v) u )
		&\mathrm{in}\ \Omega\times(0,\infty), \\[1mm]
	\tau v_t=\Delta v - v + u &\mathrm{in}\ \Omega\times(0,\infty), \\[1mm]
	\frac{\partial u}{\partial \nu} = \frac{\partial v}{\partial \nu} = 0 &\mathrm{in}\ \partial\Omega\times(0,\infty).   
\end{cases}
 \end{align}
Here $\Omega$ is a bounded domain of $\mathbf{R}^n$ ($n\geq 1$) with smooth boundary, the relaxation time $\tau$ is a nonnegative constant, and $u$ and $v$ denote the density of cells and the concentration of a chemical substance, respectively. This system is a simplified system introduced in \cite{Science, Fuel} to describe the process of stripe pattern formation of cells. 
The function $\gamma (v)$ represents a signal-dependent motility function, which is positive and decreasing. 
This system is also introduced to describe chemotaxis movement taking account of the local sensing effect (\cite{KSb}).

From a mathematical view point, positive smooth solutions to this system have properties similar to those of the so-called Keller--Segel system:
 \begin{align}
\label{eqn:KellerSegel}
\begin{cases}
	u_t=\nabla \cdot \left( \nabla u - u \nabla \chi (v)\right)
		&\mathrm{in}\ \Omega\times(0,\infty), \\[1mm]
	\tau v_t=\Delta v - v + u &\mathrm{in}\ \Omega\times(0,\infty), \\ 
		\frac{\partial u}{\partial \nu} = \frac{\partial v}{\partial \nu} = 0 &\mathrm{in}\ \partial\Omega\times(0,\infty),
\end{cases}
 \end{align}
which is also introduced to describe the aggregation of cells.  
Here $\chi(v)$ is a signal-dependent sensitivity function. 
Since the first equation of (\ref{eqn:BaseMod}) can be rewritten as 
\[
u_t = \nabla \cdot \gamma (v) \left( \nabla u - u \nabla \log \frac{1}{\gamma (v)} \right), 
\]
stationary structures of both systems coincide (see \cite{FJ21}) if
$$
\chi (v) = \log \frac{1}{\gamma (v)}.
$$
Especially, in the special case where $\gamma (v) = e^{-v}$ and $\chi (v) =v$, (\ref{eqn:BaseMod}) and (\ref{eqn:KellerSegel}) have the Lyapunov functionals:
\begin{eqnarray*}
&&\frac{d}{dt} \mathcal{F}(u(t),v(t)) +\io |v_t|^2\,dx 
+ \int_\Omega ue^{-v} |\nabla (\log u - v)|^2 \,dx = 0 \quad (\mbox{see \cite{FJ21, JW}}) \\
\mbox{ and } && \\
&&\frac{d}{dt} \mathcal{F}(u(t),v(t)) +\io |v_t|^2\,dx 
+ \int_\Omega u |\nabla (\log u - v)|^2\, dx = 0 \quad (\mbox{see \cite{NSY}}),
\end{eqnarray*}
respectively, where
\[
 \mathcal{F}(u(t),v(t)) := \int_\Omega \left( u(t)\log u(t) - u(t)v(t) + \frac{1}{2}|\nabla v(t)|^2 + \frac{1}{2}v(t)^2 \right)\, dx.
\]

It is of interest to know whether the solutions of both systems behave similarly or not.  
In the case where $(\gamma (v), \chi (v) )= (e^{-v}, v)$ and $n=2$, by using these Lyapunov functionals and the mass conservation law, global existence and blowup of solutions are studied; 
Solutions to both systems \eqref{eqn:BaseMod} and \eqref{eqn:KellerSegel} exist globally in time and are uniformly bounded in time if $\int_\Omega u(x,0) dx < \theta^\ast$ (\cite{FJ21, JW, NSY}); 
there exist unbounded solutions to these systems if $\int_\Omega u(x,0) dx > \theta^\ast$ (\cite{FJ21,  JW, HW}),
where $\theta^\ast = 8\pi$ for radially symmetric solutions 
and $\theta^\ast =4\pi$ for nonradial solutions. 
Although \eqref{eqn:KellerSegel} has finite time blowup solutions (\cite{MizWin}), 
solutions of \eqref{eqn:BaseMod} exist globally in time and blow up at infinite time (\cite{FJ21}). 
Similar result is established in the higher dimensional setting ($n \geq 3$). 
Global existence of some weak solutions to \eqref{eqn:BaseMod} with $\gamma(v)=e^{-v}$ is established in \cite{BLT}. 
Moreover in \cite{FSpreprint} global existence of classical solutions to \eqref{eqn:BaseMod} is proved for general smooth and decreasing function $\gamma$ and infinite time blowup solutions are constructed when $\gamma (v) = e^{-v}$
despite the fact that \eqref{eqn:KellerSegel} with $\chi (v)=v$ has finite time blowup solutions even for small data in the higher dimensional setting (\cite{Win2013}).

In the present paper, we pay attention to the case where $\gamma (v) = v^{-k}$ and $\chi (v) = k \log v$ with $k>0$.  
In these cases, the first equations of (\ref{eqn:BaseMod}) and (\ref{eqn:KellerSegel}) can be rewritten as 
\[
u_t = \nabla\cdot v^{-k} \left( \nabla u - k \nabla \log v \right) 
\]
and
\[
u_t = \nabla\cdot \left( \nabla u - k \nabla \log v \right),
\]
respectively. 
As to the system \eqref{eqn:KellerSegel} in the higher dimensional setting $n \geq3$, it is conjectured that the threshold number separating global existence and blowup of classical solutions is $k=n/(n-2)$ in \cite{FS2018} based on observations on some nonlinear heat equation which can be seen as the limit of the system \eqref{eqn:KellerSegel}.
This conjecture is still open, however there are several results in support of this conjecture. 
Actually, if $k<\frac{n}{n-2}$ global existence and boundedness are established under some technical conditions on $\tau$ and domain (\cite{FS2018}) and in some special framework of weak solutions (\cite{LW}); 
finite time blow up solutions of the corresponding parabolic-elliptic system are constructed for $k>\frac{2n}{n-2}$ (\cite{NS98}). 
Concerning solutions to (\ref{eqn:BaseMod}), there are also many researches corresponding to this conjecture.  
Boundedness of solutions are established for small $k>0$ (for parabolic-parabolic case \cite{D2019,  FJ21, FJpreprint}; for parabolic-elliptic case \cite{Anh19,FJpreprint}), but unfortunately in these results the conditions on $k$ is strictly smaller than the conjectured range $(0,\frac{n}{n-2})$.  
\textcolor{black}{ In \cite{YK2017} global existence and boundedness are established for any $k>0$, but some smallness of another parameter is required, 
and this smallness condition restricts the size of the initial data (We will see details in Section \ref{sec:discussion}). }
Recently, Jiang and Lauren\c{c}ot \cite{JL} study the simplified system ($\tau =0$) and show the following:
\begin{itemize}
\item If $\gamma (v)$ is positive and bounded, then solutions of \eqref{eqn:BaseMod} with $\tau =0$ exist globally in time;
\item If $\gamma (v) = v^{-k}$ and $k \in(0, n/(n-2))$, then global solutions of \eqref{eqn:BaseMod} with $\tau=0$ are uniformly bounded in time. 
\end{itemize}

For the fully parabolic system, in \cite{FSpreprint} the authors show that solutions of \eqref{eqn:BaseMod} with $\tau >0$ exist globally in time if $\gamma$ satisfies the following condition
\begin{description}
\item[(A0)]  $ \gamma \in C^3((0,\infty)), \quad
 \gamma >0 \ \mbox{ in } (0,\infty),\quad
  \gamma^\prime \leq 0 \ \mbox{ on } (0,\infty)$.
\end{description}
In the present paper, our interest is in whether global solutions of \eqref{eqn:BaseMod} with $\tau>0$ and $\gamma (v) = v^{-k}$ remain bounded or blow up at infinite time for the conjectured range $k \in (0,n/(n-2))$, independently of the size of the initial data. 
That is, without any technical assumptions (any restriction on $\tau$) we will give an affirmative answer to the corresponding boundedness problem to the above conjecture. 

We consider the following initial boundary-value problem:
 \begin{align}
\label{eqn:kinemod}
\begin{cases}
	u_t=\Delta (\gamma (v) u )
		&\mathrm{in}\ \Omega\times(0,\infty), \\[1mm]
	\tau v_t=\Delta v - v + u
	&\mathrm{in}\ \Omega\times(0,\infty), \\[1mm]
\displaystyle	\frac{\partial u}{\partial \nu} = \frac{\partial v}{\partial \nu} = 0 
	&\mathrm{on}\ \partial\Omega \times (0,\infty), \\[1mm]
	u(\cdot,0)=u_0, \ v(\cdot,0)=v_0 &\mathrm{in}\ \Omega,
\end{cases}
 \end{align}
where $\tau>0$, $\Omega$ is a bounded domain in $\mathbf{R}^n$ ($n\geq 3$) with smooth boundary $\partial \Omega$.
Moreover, we assume the following:
\begin{description}
 \item[(A1)] $\gamma (v)=v^{-k}$ with some $k \in (0, \frac{n}{n-2})$.
 \item[(A2)]  $(u_0,v_0)\in (W^{1,p_0}(\Omega))^2 \;\text{with some}\;p_0>n,\quad u_0\geq0,\; v_0 > 0 \quad  \mbox{in } \overline\Omega, \quad u_0\not\equiv0$.
\end{description}
We note here that global existence of classical solutions is guaranteed since $\gamma (v)=v^{-k}$ with $k>0$ satisfies {\bf (A0)}. 
Our main result reads as follows.
\begin{theo}
 \label{theo:TG}
 Let $\tau>0$.
Suppose {\bf (A1)} and {\bf (A2)}.  
Then solutions of \eqref{eqn:kinemod} exist globally in time and 
remain bounded uniformly in time:
$$
\sup_{t\in [0,\infty)} \left( \|u(t)\|_{L^\infty(\Omega)} + \|v(t)\|_{L^{\infty}(\Omega)} \right)<\infty.
$$ 
\end{theo}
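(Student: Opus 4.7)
Since \cite{FSpreprint} already gives global existence of classical solutions under assumption (A0) (which is satisfied by $\gamma(v)=v^{-k}$), it suffices to rule out blow-up at infinity by establishing uniform-in-time $L^\infty$ bounds on $u$ and $v$. Throughout I would use the standard a priori data: mass conservation $\int_\Omega u(t) \equiv \int_\Omega u_0 =: M$, positivity $u \geq 0$ and $v > 0$, and the strict local-in-time positive lower bound of $v$ coming from the maximum principle for its linear parabolic equation. The plan is the two-step strategy announced in the introduction: first reduce the coupled system to a single evolution equation for an auxiliary function, then close a priori estimates on that equation by means of a refined comparison bound.

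For the reduction, a natural choice is to set $\Phi(t) := (-\Delta_N)^{-1}\bigl(u(t)-M/|\Omega|\bigr)$, normalized by $\int_\Omega \Phi = 0$ and $\partial_\nu \Phi = 0$. Because the boundary conditions $\partial_\nu u = \partial_\nu v = 0$ imply $\partial_\nu(\gamma(v)u)=0$, applying $(-\Delta_N)^{-1}$ to the $u$-equation yields the pointwise identity $\Phi_t + \gamma(v)u = \overline{\gamma(v)u}(t)$, the overline denoting spatial average. Combining with $\tau v_t - \Delta v + v = u = M/|\Omega| - \Delta\Phi$, one obtains a single semilinear parabolic equation for a suitable combination such as $Z := \tau v + \Phi$ (or $Z := v-\Phi$), whose only genuine nonlinearity is $\gamma(v)u = v^{-k}u$ and whose spatially averaged forcing $\overline{\gamma(v)u}$ can be handled separately.

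The cornerstone, as the authors emphasize, is the refined comparison estimate bounding the nonlinearity $v^{-k}u$ by a manageable function of the auxiliary variable. I would exploit the parabolic maximum principle on the $v$-equation to compare $v$ with a Neumann heat-semigroup convolution of $u$, which via the relation $u = -\Delta\Phi + M/|\Omega|$ ties $v$ to $\Phi$. The precise threshold $k < n/(n-2)$ should enter through the dimensional balance between the two-derivative gain of $(-\Delta_N)^{-1}$ and the homogeneity of $v^{-k}u$: by Sobolev embedding $\Phi \in L^\infty$ yields $u$ in the scaling-critical space $L^{n/(n-2)}$, which when multiplied against the $v^{-k}$ prefactor produces a subcritical effective nonlinearity exactly when $k(n-2)<n$.

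With the nonlinearity controlled, the conclusion should follow by standard $L^p$-testing and Moser-type iteration on the auxiliary evolution equation: first obtain uniform-in-time $\|Z\|_{L^p(\Omega)}$ bounds for every $p<\infty$, upgrade to $\|v\|_\infty$ via $L^p\to L^\infty$ semigroup regularization on the $v$-equation, and finally recover $\|u\|_\infty$ using $u = v^k\cdot \gamma(v)u$ together with the control of $\gamma(v)u$ or by parabolic regularity applied to $u_t=\Delta(\gamma(v)u)$ with $\gamma(v)$ now bounded from above and below. The main obstacle I foresee is the refined comparison itself: because $n/(n-2)$ is scaling-critical, the comparison must extract the full smoothing of both the parabolic and elliptic operators with no slack, and must be engineered to avoid any circular dependence on $\|u\|_\infty$ or $\|v\|_\infty$, which are precisely the quantities one is trying to bound.
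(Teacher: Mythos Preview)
Your overall architecture---auxiliary function via an inverse Laplacian, a comparison estimate, $L^p$ testing, then Moser iteration---matches the paper's. But the proposal has a genuine gap at exactly the step you flag as the obstacle. The paper's auxiliary function is $w:=(1-\Delta_N)^{-1}u$, and the ``refined comparison'' is the \emph{two-sided pointwise} estimate $d_1 v \le w \le d_2 v$ on $\Omega\times(0,\infty)$ (Lemma~\ref{lemm:Estvw}). The direction $v\lesssim w$ was already known and follows, as you say, from the heat-semigroup/maximum-principle comparison on the $v$-equation. What is new and drives the whole argument is the reverse bound $w\lesssim v$, which your proposal does not supply. The paper obtains it by computing $(\tau\partial_t-\Delta+1)v^{1-k}$ and $(-\Delta+1)w^{1-k}$ (with $\log$ in the borderline $k=1$), combining these with the $w$-identity $w_t+\gamma(v)u=(1-\Delta_N)^{-1}(\gamma(v)u)$, and running several layered parabolic/elliptic comparisons; three separate cases $k<1$, $k=1$, $k>1$ are needed. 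Without $w\lesssim v$ you cannot replace $u/v^k$ by $u/w^k=(-\Delta w+w)/w^k$, which is what decouples the $w$-equation and makes the $L^p$ energy inequality closable.

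Your account of where $k<n/(n-2)$ enters is also off. The implication ``$\Phi\in L^\infty$ yields $u\in L^{n/(n-2)}$'' is backwards: an $L^\infty$ bound on $(1-\Delta_N)^{-1}u$ gives no Lebesgue improvement on $u$. In the paper the threshold appears the other way around: mass conservation gives $u\in L^1$, hence by Br\'ezis--Strauss elliptic regularity $w\in L^q$ for every $q<\tfrac{n}{n-2}$. This is used (Lemma~\ref{lem:key}) to absorb the source term $(1-\Delta_N)^{-1}(u/v^k)$, which the two-sided comparison bounds by $Cw^{1-k}$, $C(1+\log w)$, or a constant according to $k<1$, $k=1$, $k>1$; the resulting power $w^{\ell+q}$ with $q<\tfrac{n}{n-2}$ is then dominated by the good term $\|w^{(\ell+1)/2}\|_{H^1}^2$ precisely when $k<\tfrac{n}{n-2}$. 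Finally, the passage from $\|v\|_\infty<\infty$ to $\|u\|_\infty<\infty$ is not as direct as you suggest: the paper first upgrades $w$ to uniform H\"older regularity, transfers this to $v$, and then invokes analytic-semigroup estimates for the nonautonomous operator $A(t)=-\gamma(v(t))(\Delta-1)$ on the $w$-equation to get $\|u\|_{L^p}$ uniformly, before a final Moser step.
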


\begin{rema}
In \cite{FS2018} the corresponding result to the above is obtained for the logarithmic Keller--Segel system if the parameter $\tau>0$ is sufficiently large. 
Differently from this result, Theorem \ref{theo:TG} hods for any $\tau>0$. 
\end{rema}

\begin{rema}
In Theorem \ref{theo:TG}, the condition {\bf (A1)} can be replaced by 
\begin{description}
 \item[(A1)$^\prime$] 
 $\gamma $ satisfies {\bf (A0)} and 
 $\frac{\Gamma_{min}}{ v^{k}} \leq \gamma (v)\leq \frac{\Gamma_{max}}{v^{k}}$ 
 with some $k \in (0, \frac{n}{n-2})$ and $\Gamma_{min}, \Gamma_{max}>0$.
\end{description}
 See Remark \ref{remA1replaced} for details.
\end{rema}

\begin{rema}
We expect that the threshold number separating boundedness and unboundedness of classical solutions to \eqref{eqn:kinemod} is $\frac{n}{n-2}$. We will give details of this conjecture in Section \ref{sec:discussion}. 
\end{rema}

The cornerstone of the present analysis is the following auxiliary identity, which can be derived after taking the resolvent of the Neumann operator $(1-\Delta_N)^{-1}$ to the first equation of \eqref{eqn:kinemod} :
\begin{equation}\label{oldview}
 w_t + \gamma (v) u = (1-\Delta_N)^{-1} (\gamma (v) u) \quad \mbox{ in } \Omega \times (0,\infty),
\end{equation}
where $w(t) :=(1-\Delta_N)^{-1}u(t)$.  
This identity is the specific feature of \eqref{eqn:kinemod} differently from the logarithmic Keller--Segel system \eqref{eqn:KellerSegel}. 
Indeed, in previous results (\cite{FJ20, FJ21,FJpreprint}) a priori estimates for $w$ and the comparison estimate
$$v(x,t) \leq Cw(x,t) \quad \mbox{for } (x,t)\in \Omega \times (0,\infty)$$
are established by using \eqref{oldview}. 
With these estimates in hand the standard iteration argument is applied for \eqref{eqn:kinemod} to derive sufficient regular estimates guaranteeing global existence and boundedness of solutions.  

In the present paper, on the other hand,  we change our perspective. 
By the definition of $w$, it follows
\begin{equation}\label{newview}
 w_t + \gamma (v) (\Delta w-w) = (1-\Delta_N)^{-1} (\gamma (v) u) \quad \mbox{ in } \Omega \times (0,\infty),
\end{equation}
which can be treat as an evolution equation of $w$.  
We first develop the refined comparison estimate in Lemma \ref{lemm:Estvw}:
$$ d_1 v(x,t) \leq w(x,t) \leq d_2 v(x,t) \quad \mbox{for } (x,t) \in \Omega \times (0,\infty)$$
with some constants $d_1,d_2>0$.  
Thanks to this estimate and the compound of the comparison principles, we can derive an upper estimate of the nonlinearity $(1-\Delta_N)^{-1} (\gamma (v) u) $ of \eqref{newview} in Lemma \ref{lemm:bounduvk} and then the testing argument implies $L^p$-estimates for $v$ and $w$ for any $p \in [1,\infty)$ in Lemma \ref{lemm:boundvLp}. Here we need the restriction $k <\frac{n}{n-2}$ coming from the optimal exponent of the regularity estimate of the elliptic equation in \cite{BS}. 
In view of these $L^p$-estimates, we can apply the iteration argument to the single equation \eqref{newview} to establish $L^\infty$-estimates for $v$ and $w$ (Proposition \ref{prop:boundvLinfty}).
By employing the abstract theory of evolution equations to \eqref{newview}, Theorem \ref{theo:TG} is proved in Section \ref{sectionproof}. 
Finally, Section \ref{sec:discussion} is devoted to the discussion about the threshold number.

\section{Preliminaries}
\label{sec:preliminaries}
We first recall global existence of the system \eqref{eqn:kinemod},  which is shown in \cite[Theorem 1.1]{FSpreprint}.
\begin{prop}
\label{prop:TGexist}
 Let $\tau>0$.
Suppose {\bf (A0)} and {\bf (A2)}.  
Then there exists a unique classical solution of (\ref{eqn:kinemod}), which exists globally in time:
$$(u,v)\in  (C^0([0,\infty); W^{1,p_0}(\Omega) )\cap C^{2,1}(\overline{\Omega}\times(0,\infty)))^2.$$
 Moreover the solution is positive on $ \overline{\Omega}\times(0,\infty)$
and the mass conservation law holds:
	\begin{equation*}
	\int_{\Omega}u(x, t)\,dx=\int_{\Omega}u_0 \,dx
	\quad \text{for\ all}\ t \in (0,\infty).
	\end{equation*}	
\end{prop}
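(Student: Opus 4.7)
My plan is to construct a local-in-time classical solution by a standard fixed-point argument for the triangular quasilinear system, deduce positivity and mass conservation from maximum-principle and divergence-theorem arguments, and then exclude finite-time blow-up by exploiting the positivity of $v$ together with the auxiliary resolvent identity \eqref{oldview}.

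For local existence, I would treat \eqref{eqn:kinemod} as follows: given $v$, expand the first equation as
\begin{equation*}
u_t = \gamma(v)\,\Delta u + 2\gamma'(v)\nabla v\cdot\nabla u + \bigl(\gamma'(v)\Delta v + \gamma''(v)|\nabla v|^2\bigr)u,
\end{equation*}
a linear strictly parabolic equation whose coefficients are controlled via the embedding $W^{1,p_0}(\Omega)\hookrightarrow C^{0}(\overline{\Omega})$ (since $p_0>n$); conversely, given $u$, the second equation is a standard inhomogeneous heat equation with Neumann data. A Banach contraction argument in $C([0,T];W^{1,p_0}(\Omega))^{2}$ for small $T$ yields a unique local classical solution on a maximal interval $[0,T_{\max})$, together with the extensibility alternative that $\|u(t)\|_{W^{1,p_0}}+\|v(t)\|_{W^{1,p_0}}\to\infty$ as $t\to T_{\max}$ whenever $T_{\max}<\infty$.

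For positivity, the weak maximum principle applied to $\tau v_t=\Delta v-v+u$ with $u\geq 0$, Neumann BC and $v_0>0$ gives the pointwise lower bound $v(x,t)\geq e^{-t/\tau}\min_{\overline{\Omega}}v_0>0$. With $v$ known to be smooth and positive, the coefficients of the linear equation above are bounded on any $[0,T]\subset[0,T_{\max})$, and the strong maximum principle applied to $u_0\geq 0$, $u_0\not\equiv 0$ yields $u>0$ on $\overline{\Omega}\times(0,T_{\max})$. Mass conservation follows from integrating the first equation over $\Omega$ and observing $\partial_\nu(\gamma(v)u)=\gamma'(v)u\,\partial_\nu v+\gamma(v)\partial_\nu u=0$ on $\partial\Omega$.

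The hard step is global existence. Since \textbf{(A0)} alone permits $\gamma(v)\to\infty$ as $v\to 0$, I must keep $\gamma(v)$ bounded on each finite time slab to avoid degeneracy of the $u$-equation. The lower bound just derived provides exactly this: $\|\gamma(v)\|_{L^\infty(\Omega\times[0,T])}\leq\gamma\!\bigl(e^{-T/\tau}\min v_0\bigr)<\infty$ for every $T\in(0,T_{\max})$. With this upper bound on $\gamma(v)$, the auxiliary identity \eqref{oldview} — viewed as a scalar evolution for $w=(1-\Delta_N)^{-1}u$ driven by the nonnegative source $(1-\Delta_N)^{-1}(\gamma(v)u)$ — combines with the $L^1$-bound from mass conservation and standard Neumann-semigroup/Duhamel estimates to produce successive $L^p$- and then $W^{1,p_0}$-bounds on $(u,v)$ on $[0,T]$, contradicting the blow-up alternative and forcing $T_{\max}=\infty$. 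The main obstacle is that the lower bound on $v$ degrades exponentially as $T\to\infty$, so the present argument yields only global existence, not uniform boundedness in time — which is precisely why the sharper assumption \textbf{(A1)} and the refined comparison estimate in Lemma \ref{lemm:Estvw} are needed for Theorem \ref{theo:TG}.
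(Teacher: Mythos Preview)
The paper does not prove this proposition; it simply quotes it from \cite[Theorem 1.1]{FSpreprint}. Your plan is consistent with the strategy there and in Section~\ref{sectionproof} of the present paper: local well-posedness by a fixed-point argument, the crude lower bound $v\ge e^{-t/\tau}\min_{\overline\Omega}v_0$ to keep $\gamma(v)$ bounded on each finite slab, and then the resolvent identity \eqref{eqn:parabw} (your \eqref{oldview}) to bound $w$ via $w_t\le (1-\Delta_N)^{-1}(\gamma(v)u)\le \gamma(v_*(T))\,w$.

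Two comments. First, your exponential lower bound is correct but unnecessarily weak: Lemma~\ref{lemm:Minvw} already supplies a \emph{time-independent} floor $v\ge v_*$ via heat-kernel lower bounds and mass conservation, so the degradation you mention is not actually present. This does not affect the global-existence argument, but it is worth noting that the obstruction to uniform boundedness under \textbf{(A0)} alone is not a deteriorating lower bound on $v$ but rather the lack of control on $v$ from above.

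Second, and more substantively, the final step---passing from $L^\infty$ control of $w$ and $v$ on $[0,T]$ to $W^{1,p_0}$ bounds on $(u,v)$---is not a matter of ``standard Neumann-semigroup/Duhamel estimates'' applied directly to the $u$-equation. The coefficients of your expanded $u$-equation involve $\nabla v$ and $\Delta v$, which are not yet controlled. The route actually taken (see the proof of Theorem~\ref{theo:TG} in Section~\ref{sectionproof}, which mirrors \cite{FSpreprint}) is: use the uniformly parabolic evolution equation \eqref{eqn:evolutionw} for $w$ with bounded measurable leading coefficient $\gamma(v)$ and bounded source to obtain H\"older regularity of $w$ (De Giorgi--Nash--Moser, \cite{Lieberman}); transfer this to H\"older regularity of $v$ via the auxiliary equation $z_t=\Delta z-z+w$ for $z=(1-\Delta_N)^{-1}v$ (Lemma~\ref{lemm:Holderv}); and only then, with $\gamma(v)$ H\"older, invoke the abstract evolution-equation theory of Tanabe \cite{Tanabe} for the operator $A(t)\varphi=-\gamma(v(t))(\Delta\varphi-\varphi)$ to extract $\|A(t)w(t)\|_{L^p}=\|\gamma(v)u\|_{L^p}$ bounds. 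Your outline is sound, but this intermediate H\"older step is the genuine technical content and should not be hidden under ``standard''.
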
 

 Let us introduce the auxiliary function: 
\begin{equation}
 \label{eqn:defw}
w(t) :=(1-\Delta_N)^{-1}u(t) \qquad \mbox{for all }t\in[0,\infty),
\end{equation}
where $\Delta_N$ represents the Laplacian with the homogeneous Neumann boundary condition. 
That is,  for each $t\geq 0$ the function $w(t)$ is the unique solution of the following boundary value problem to the elliptic equation:
\begin{equation}\label{w-eq}
\begin{cases}
-\Delta w+w=u \qquad&x \in \Omega,\\
\frac{\partial w}{\partial \nu} = 0 &x \in \partial\Omega.
\end{cases}
\end{equation} 
Here we remark $w = (1-\Delta_N)^{-1}u \in C^{2,1}(\overline{\Omega}\times(0,\infty))$ by the elliptic regularity theory.

\medskip
The uniform-in-time lower bounds for $v$ and $w$ are obtained in the following lemma. 
The bound for $w$ comes from the positivity of the Helmholtz operator and the mass conservation (see \cite[Corollary 2.3]{Anh19}). 
Similarly, the lower estimate for the heat kernel and the mass conservation imply the lower bound for $v$ (\cite[Lemma 3.1]{FS2018},  \cite[Lemma 3.2]{FJ21}). 
Especially as noted in  \cite[Lemma 3.1]{FS2018}, the lower bound $v_\ast$ is independent of $\tau>0$.
\begin{lemm}
\label{lemm:Minvw}
 Let $\tau>0$. Assume {\bf (A0)} and {\bf (A2)}.  
 Suppose that $(u,v)$ be a solution of \eqref{eqn:kinemod} and 
 $w$ be the function defined in \eqref{eqn:defw}. Then there exist positive constants $v_\ast= v_\ast(n,\Omega,u_0,v_0)$ and $w_\ast= w_\ast(n,\Omega,u_0,v_0)$ satisfying
\[
 v \geq v_\ast \ \mbox{ and } \ w \geq w_\ast \quad \mbox{ in } \overline{\Omega} \times [0,\infty),
\]  
where constants $v_\ast$ and $w_\ast$ are independent of $\tau>0$.
\end{lemm}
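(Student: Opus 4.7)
For both inequalities I would combine the mass conservation $\int_\Omega u(\cdot,t)\,dx = M := \|u_0\|_{L^1(\Omega)}$ of Proposition \ref{prop:TGexist} with pointwise positivity of the linear kernels representing $w$ and $v$. The only real subtlety is the $\tau$-independence of the lower bound for $v$.

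\textbf{Bound on $w$.} From \eqref{w-eq} one has the representation $w(x,t) = \int_\Omega G_N(x,y)\,u(y,t)\,dy$, where $G_N$ is the Green's function of $1-\Delta_N$ on $\Omega$ with homogeneous Neumann boundary condition. Standard elliptic theory shows $G_N>0$ on $\overline{\Omega}\times\overline{\Omega}$ (continuous and strictly positive off the diagonal and diverging on it for $n\geq 2$), so $c_0 := \inf_{x,y\in\overline{\Omega}} G_N(x,y) > 0$. Mass conservation then yields $w(x,t)\geq c_0 M =: w_\ast$, which depends only on $n$, $\Omega$ and $\|u_0\|_{L^1(\Omega)}$ and in particular not on $\tau$.

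\textbf{Bound on $v$.} Rewrite the $v$-equation as $v_t + \tau^{-1}(1-\Delta_N)v = \tau^{-1}u$ and apply Duhamel's formula; after the change of variables $\sigma=(t-s)/\tau$ in the convolution integral,
\[
v(x,t) = e^{-t/\tau}\bigl(e^{(t/\tau)\Delta_N}v_0\bigr)(x) + \int_0^{t/\tau} e^{-\sigma}\bigl(e^{\sigma\Delta_N} u(\cdot,\,t-\tau\sigma)\bigr)(x)\,d\sigma,
\]
which is free of explicit $\tau$-prefactors. Since the Neumann heat kernel satisfies $\kappa(\sigma) := \inf_{\overline{\Omega}\times\overline{\Omega}} H_\sigma(x,y) > 0$ for each $\sigma>0$ (Gaussian lower bounds on a bounded smooth domain), mass conservation bounds the integrand pointwise below by $e^{-\sigma}\kappa(\sigma)M$. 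If $t\geq\tau$, the Duhamel integral is then at least $M\int_{1/2}^{1}e^{-\sigma}\kappa(\sigma)\,d\sigma > 0$, uniformly in $x,t,\tau$. If $0\leq t<\tau$, the first term dominates: $(s,x)\mapsto(e^{s\Delta_N}v_0)(x)$ is jointly continuous on $[0,\infty)\times\overline{\Omega}$, strictly positive, and converges as $s\to\infty$ to the positive average $\bar v_0$, so it admits a uniform positive lower bound $m_0$; combined with $e^{-t/\tau}\geq e^{-1}$, this gives $v(x,t)\geq e^{-1}m_0$. The minimum of the two bounds furnishes $v_\ast>0$ independent of $\tau$.

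\textbf{Main obstacle.} The hard part is the $\tau$-independence in the estimate for $v$: written in the variable $s$, the Duhamel convolution carries either a prefactor $1/\tau$ or an effective time interval of length $\tau$, either of which spoils uniformity. The rescaling $\sigma=(t-s)/\tau$ moves $\tau$ entirely into the upper integration limit, and the resulting integral is then controlled by splitting into the regimes $t\geq\tau$ (where mass conservation of $u$ forces a positive contribution of the Duhamel term) and $t<\tau$ (where the initial datum $v_0>0$ still governs $v$ via the first term).
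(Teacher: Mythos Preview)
Your argument is correct and matches the approach the paper indicates: the paper does not spell out a proof but refers to the positivity of the Helmholtz Green's function together with mass conservation for $w$, and to heat-kernel lower bounds together with mass conservation for $v$ (citing \cite{Anh19}, \cite{FS2018}, \cite{FJ21}), noting in particular that the $\tau$-independence of $v_\ast$ is established in \cite[Lemma~3.1]{FS2018}. Your Duhamel computation with the substitution $\sigma=(t-s)/\tau$ and the case split $t\ge\tau$ versus $t<\tau$ is precisely the mechanism behind that cited $\tau$-independence, so you have essentially reproduced the referenced proofs.
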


Let us introduce the auxiliary identity, which is the specific structure of \eqref{eqn:kinemod}.

\begin{lemm}
 Let $\tau>0$. Assume {\bf (A0)} and {\bf (A2)}.  
 \label{lemm:eqnw}
$u$, $v$ and $w$ satisfy the following identity:
\begin{equation}
\label{eqn:parabw}
 w_t + \gamma (v) u = (1-\Delta_N)^{-1} (\gamma (v) u) \quad \mbox{ in } \Omega \times (0,\infty)
\end{equation}
and also
\begin{eqnarray}\label{eqn:evolutionw}
 w_t + \gamma (v) (\Delta w-w) = (1-\Delta_N)^{-1} (\gamma (v) u) \quad \mbox{ in } \Omega \times (0,\infty),
\end{eqnarray}
\end{lemm}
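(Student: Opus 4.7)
Both identities will follow by applying the resolvent $(1-\Delta_N)^{-1}$ to the first equation of \eqref{eqn:kinemod} and then using the defining problem \eqref{w-eq} of $w$ to re-express $u$ in terms of $w$.

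I would start from $u_t=\Delta(\gamma(v)u)$ and apply $(1-\Delta_N)^{-1}$ to both sides. The regularity $(u,v)\in C^{2,1}(\overline{\Omega}\times(0,\infty))$ supplied by Proposition~\ref{prop:TGexist}, together with elliptic regularity for \eqref{w-eq}, ensures that $\partial_t$ commutes with the resolvent, so that the left-hand side becomes $w_t$. For the right-hand side I would write $\Delta f=f-(1-\Delta)f$ with $f:=\gamma(v)u$. The crucial point is that $f$ inherits a homogeneous Neumann boundary condition: since $\gamma\in C^3$ and $\partial_\nu u=\partial_\nu v=0$, one has
\[
\partial_\nu\bigl(\gamma(v)u\bigr)=\gamma^\prime(v)(\partial_\nu v)\,u+\gamma(v)\,\partial_\nu u=0 \quad \text{on } \partial\Omega.
\]
Therefore $(1-\Delta_N)^{-1}(1-\Delta)f=f$, and consequently
\[
w_t=(1-\Delta_N)^{-1}\bigl(\gamma(v)u\bigr)-\gamma(v)u,
\]
which is \eqref{eqn:parabw}.

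To obtain \eqref{eqn:evolutionw}, I would simply insert the defining identity $u=w-\Delta w$ from \eqref{w-eq} into the $\gamma(v)u$-term on the left-hand side of \eqref{eqn:parabw}; a purely algebraic rearrangement then yields the claimed evolution equation for $w$.

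The derivation is essentially formal and I do not expect any substantial obstacle. The only issues that deserve attention, namely the exchange of $\partial_t$ with $(1-\Delta_N)^{-1}$ and the vanishing of the Neumann trace of $\gamma(v)u$, are both resolved directly by the smoothness asserted in Proposition~\ref{prop:TGexist} and by the boundary conditions built into \eqref{eqn:kinemod}.
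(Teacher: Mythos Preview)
Your proposal is correct and follows exactly the same route as the paper: apply $(1-\Delta_N)^{-1}$ to the first equation of \eqref{eqn:kinemod} to obtain \eqref{eqn:parabw}, then substitute $u=w-\Delta w$ from \eqref{w-eq} to obtain \eqref{eqn:evolutionw}. In fact you supply more detail than the paper's two-line proof, correctly verifying the Neumann condition for $\gamma(v)u$ and the commutation of $\partial_t$ with the resolvent; note only that the substitution actually produces $w_t+\gamma(v)(w-\Delta w)=(1-\Delta_N)^{-1}(\gamma(v)u)$, so the sign in \eqref{eqn:evolutionw} as printed is a typo (consistent with how the equation is used later, e.g.\ the operator $A(t)\varphi=-\gamma(v)(\Delta\varphi-\varphi)$ in Section~\ref{sectionproof}).
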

\begin{proof}
By taking $(1-\Delta_N)^{-1}$ to the first equation of (\ref{eqn:kinemod}), 
we obtain \eqref{eqn:parabw}.  
\eqref{w-eq} and \eqref{eqn:parabw} imply \eqref{eqn:evolutionw}.
\end{proof}

The following lemmas play key roles in Section \ref{sec:refined comparison estimate}.

\begin{lemm}\label{calculation_v}
Let $\ell\in (0,1) \cup (1,\infty)$ and $(u,v)$ be a solution of \eqref{eqn:kinemod} in $\Omega \times (0,\infty)$. 
Then the following identity holds:
\begin{eqnarray*}
\left(
\tau \frac{\partial}{\partial t} - \Delta + 1\right)v^{1-\ell} 
 =  (1-\ell) \frac{u}{v^\ell} + \ell v^{1-\ell} + \frac{\ell(1-\ell)}{v^{1+\ell}} |\nabla v|^2
 \quad \mbox{ in } \Omega \times (0,\infty).
\end{eqnarray*}
\end{lemm}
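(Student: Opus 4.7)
The statement is a pure chain-rule calculation, so the plan is to differentiate $v^{1-\ell}$ and use the $v$-equation to eliminate the $\tau v_t - \Delta v$ combination that appears.

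First I would compute the three pieces of $(\tau \partial_t - \Delta + 1)v^{1-\ell}$ separately. The time derivative is immediate: $\tau \partial_t v^{1-\ell} = \tau(1-\ell)v^{-\ell}v_t$. For the Laplacian, write $\nabla v^{1-\ell} = (1-\ell)v^{-\ell}\nabla v$ and apply $\nabla\cdot$, giving
\begin{equation*}
\Delta v^{1-\ell} = (1-\ell)v^{-\ell}\Delta v - \ell(1-\ell)v^{-\ell-1}|\nabla v|^2.
\end{equation*}
Adding the zero-order term $v^{1-\ell}$ and collecting yields
\begin{equation*}
\bigl(\tau\partial_t - \Delta + 1\bigr)v^{1-\ell} = (1-\ell)v^{-\ell}\bigl(\tau v_t - \Delta v\bigr) + \ell(1-\ell)v^{-\ell-1}|\nabla v|^2 + v^{1-\ell}.
\end{equation*}

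Next I would invoke the second equation of \eqref{eqn:kinemod}, which gives $\tau v_t - \Delta v = u - v$. Substituting this into the bracket and distributing $(1-\ell)v^{-\ell}$ produces $(1-\ell)u/v^\ell - (1-\ell)v^{1-\ell}$. Combining the latter with the leftover $v^{1-\ell}$ gives $(1 - (1-\ell))v^{1-\ell} = \ell v^{1-\ell}$, which after rearranging is exactly the claimed identity. The positivity of $v$ on $\overline{\Omega}\times(0,\infty)$ from Proposition \ref{prop:TGexist} ensures that all powers $v^{-\ell}$, $v^{-\ell-1}$ are well-defined and the chain rule is applicable in the classical sense.

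There is essentially no obstacle in this proof; it is a bookkeeping exercise whose only subtlety is keeping track of the sign $(1-\ell)$, which can be either positive or negative depending on whether $\ell \in (0,1)$ or $\ell \in (1,\infty)$, but the identity is an algebraic equality that is insensitive to this sign. The restriction $\ell \neq 1$ is only needed to make the statement non-trivial (for $\ell = 1$ both sides collapse to $v^{0}=1$ after one sees that $\tau \partial_t 1 - \Delta 1 + 1 = 1$, and the $(1-\ell)$ factor annihilates the interesting contributions).
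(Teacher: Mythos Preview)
Your proof is correct and follows essentially the same approach as the paper: both compute $\tau\partial_t v^{1-\ell}$ and $\Delta v^{1-\ell}$ by the chain rule, regroup to isolate the factor $(1-\ell)v^{-\ell}(\tau v_t-\Delta v+v)$ (you do this via $\tau v_t-\Delta v=u-v$ and then combine the $v^{1-\ell}$ terms, which amounts to the same thing), and substitute the second equation of \eqref{eqn:kinemod}. There is nothing to add.
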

\begin{proof}
The straightforward calculation leads us to 
\begin{eqnarray*}
 \tau \frac{\partial v^{1-\ell}}{\partial t} - \Delta v^{1-\ell} + v^{1-\ell}
& = & \tau \frac{(1-\ell)}{v^\ell} v_t -\frac{(1-\ell)}{v^\ell} \Delta v + v^{1-\ell} + \frac{\ell(1-\ell)}{v^{1+\ell}} |\nabla v|^2 \\
& = & \frac{(1-\ell)}{v^\ell} \left( \tau v_t - \Delta v + v \right) + \ell v^{1-\ell} + \frac{\ell(1-\ell)}{v^{1+\ell}} |\nabla v|^2.
\end{eqnarray*}
By inserting the second equation of \eqref{eqn:kinemod} into the above, we have 
\begin{eqnarray*}
 \tau \frac{\partial v^{1-\ell}}{\partial t} - \Delta v^{1-\ell} + v^{1-\ell}
 =  (1-\ell) \frac{u}{v^\ell} + \ell v^{1-\ell} + \frac{\ell(1-\ell)}{v^{1+\ell}} |\nabla v|^2,
\end{eqnarray*}
which is the desired identity.
\end{proof}

\begin{lemm}\label{calculation_w}
Let $\ell\in (0,1) \cup (1,\infty)$ and $(u,v)$ be a solution of \eqref{eqn:kinemod} in $\Omega \times (0,\infty)$. 
Then the auxiliary function $w$ defined in \eqref{eqn:defw} satisfies the following identity:
\begin{eqnarray*}
\left(- \Delta + 1\right) w^{1-\ell} 
 =(1-\ell) \frac{u}{w^\ell} + \ell w^{1-\ell} + \frac{\ell(1-\ell)}{w^{\ell+1}} |\nabla w|^2
 \quad \mbox{ in } \Omega \times (0,\infty).
\end{eqnarray*}
\end{lemm}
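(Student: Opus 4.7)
The plan is to mimic the proof of Lemma \ref{calculation_v}, exploiting the fact that the auxiliary function $w$ satisfies the \emph{elliptic} equation $-\Delta w + w = u$ (from \eqref{w-eq}) rather than a parabolic one. Thus the computation is formally the same, with the time derivative term simply absent. Before starting, I note that $w \in C^{2,1}(\overline{\Omega}\times(0,\infty))$ by the elliptic regularity theory mentioned after \eqref{w-eq}, and $w \geq w_\ast > 0$ by Lemma \ref{lemm:Minvw}, so raising $w$ to the power $1-\ell$ and dividing by powers of $w$ are pointwise justified for any $\ell \in (0,1)\cup(1,\infty)$.

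First I would apply the chain rule to $w^{1-\ell}$. A direct computation gives
\begin{equation*}
\nabla w^{1-\ell} = (1-\ell) w^{-\ell}\nabla w, \qquad
\Delta w^{1-\ell} = (1-\ell) w^{-\ell}\Delta w - \ell(1-\ell)w^{-\ell-1}|\nabla w|^2.
\end{equation*}
Therefore
\begin{equation*}
(-\Delta + 1)w^{1-\ell}
 = -(1-\ell)w^{-\ell}\Delta w + \frac{\ell(1-\ell)}{w^{\ell+1}}|\nabla w|^2 + w^{1-\ell}.
\end{equation*}

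Next I would group the first and last terms so as to reproduce the combination $(-\Delta w + w)$. Writing $w^{1-\ell} = (1-\ell)w^{-\ell}\cdot w + \ell w^{1-\ell}$, the previous identity becomes
\begin{equation*}
(-\Delta + 1)w^{1-\ell}
 = \frac{(1-\ell)}{w^\ell}\bigl(-\Delta w + w\bigr) + \ell w^{1-\ell} + \frac{\ell(1-\ell)}{w^{\ell+1}}|\nabla w|^2.
\end{equation*}
Finally, substituting the elliptic identity $-\Delta w + w = u$ from \eqref{w-eq} into the first term on the right-hand side yields exactly the claimed formula.

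There is no substantive obstacle here; the statement is purely algebraic manipulation of the equation defining $w$, and the only role of the hypothesis $\ell \neq 1$ is to keep the factor $(1-\ell)$ nontrivial so that the identity contains nonzero contributions from $u/w^\ell$ and $|\nabla w|^2/w^{\ell+1}$. (The case $\ell = 1$ is of course trivial: $w^{0}=1$ and both sides equal $1$.) The real work enabled by this lemma, together with Lemma \ref{calculation_v}, will come later, where the structural similarity between the identities for $v^{1-\ell}$ and $w^{1-\ell}$ is leveraged to establish the refined comparison $d_1 v \leq w \leq d_2 v$ advertised in the introduction.
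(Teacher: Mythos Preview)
Your proof is correct and follows essentially the same route as the paper: compute $\Delta w^{1-\ell}$ by the chain rule, regroup to isolate $(1-\ell)w^{-\ell}(-\Delta w + w)$, and substitute $-\Delta w + w = u$ from \eqref{w-eq}. The paper's own proof is slightly terser (it just says ``proceeding the same calculations as in the proof of Lemma~\ref{calculation_v}''), but the content is identical.
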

\begin{proof}
Proceeding the same calculations as in the proof of Lemma \ref{calculation_v}, we have
\begin{eqnarray*}
- \Delta w^{1-\ell} + w^{1-\ell}
& = & \frac{(1-\ell)}{w^\ell} \left( -\Delta w + w \right) +\ell w^{1-\ell}+ \frac{\ell(1-\ell)}{w^{\ell+1}} |\nabla w|^2,
\end{eqnarray*}
and thus due to \eqref{w-eq} it follows
\begin{eqnarray*}
- \Delta w^{1-\ell} + w^{1-\ell}
& = & (1-\ell) \frac{u}{w^\ell} + \ell w^{1-\ell} + \frac{\ell(1-\ell)}{w^{\ell+1}} |\nabla w|^2.
\end{eqnarray*}
We complete the proof.
\end{proof}

\section{Refined comparison estimates}
\label{sec:refined comparison estimate}
In the previous work \cite{FJ21}, the following estimate is shown:
$$
 v \leq C(w+1) \quad \mbox{ in } \Omega \times (0,\infty)
$$
with some $C>0$. 
In this section, we will refine the above estimate and show $v \simeq w$.

\begin{lemm}
 \label{lemm:Estvw}
Assume $\tau>0$ and $\gamma (v)=v^{-k}$ with $k>0$.  
Let $(u,v)$ be a solution of \eqref{eqn:kinemod} in $\Omega \times (0,\infty)$ and 
 $w$ be the function defined in \eqref{eqn:defw}. 
Then there exist positive constants $d_1$ and $d_2$ such that 
\[
 d_1 v \leq w \leq d_2 v \quad \mbox{ in } \Omega \times (0,\infty).
\]
\end{lemm}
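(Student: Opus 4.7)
The plan is to establish both inequalities by parabolic testing arguments applied to the auxiliary differences $v-Mw$ and (essentially) $w-Nv$, using the auxiliary identity of Lemma \ref{lemm:eqnw} in a crucial way to dispose of the a priori unsigned term $w_t$.

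For the lower bound $w\ge d_1 v$ (equivalently $v\le Mw$ with $M=1/d_1$), I would set $\psi:=v-Mw$ for $M>0$ to be chosen large and introduce $\mathcal{L}:=\tau\partial_t-\Delta+1$. The $v$-equation gives $\mathcal{L}v=u$, and \eqref{w-eq} combined with \eqref{eqn:parabw} yields $\mathcal{L}w=\tau w_t+u$, so
\[
\mathcal{L}\psi=(1-M)u+M\gamma(v)u-M(1-\Delta_N)^{-1}(\gamma(v)u).
\]
Testing against $\psi^+$ and integrating, the nonlocal term is handled by self-adjointness: $\int_\Omega \psi^+\,(1-\Delta_N)^{-1}(\gamma(v)u)\,dx=\int_\Omega \gamma(v)u\,(1-\Delta_N)^{-1}\psi^+\,dx\ge 0$ by positivity of the resolvent, so this contribution carries the favourable sign. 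Combined with the bound $\gamma(v)=v^{-k}\le v_*^{-k}$ provided by Lemma \ref{lemm:Minvw} and the fact that $(1-\Delta_N)^{-1}$ is $L^2$-non-expansive, I expect to arrive at a Gronwall-type inequality for $\|\psi^+\|_{L^2(\Omega)}^2$. Since $w_0>0$ on $\overline\Omega$ by the strong maximum principle applied to \eqref{w-eq} (using $u_0\not\equiv 0$), one can choose $M\ge\|v_0/w_0\|_{L^\infty(\Omega)}$ so that $\psi^+(\cdot,0)\equiv 0$, and the Gronwall estimate then forces $\psi^+\equiv 0$, i.e.\ $v\le Mw$.

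For the upper bound $w\le d_2 v$, a direct parallel argument on $w-Nv$ produces unfavourable signs, so I would instead work with the nonlinear surrogates $v^{1-\ell}$ and $w^{1-\ell}$ for a suitable $\ell>1$. For such $\ell$ the gradient contributions $\ell(1-\ell)|\nabla v|^2/v^{1+\ell}$ and $\ell(1-\ell)|\nabla w|^2/w^{1+\ell}$ in Lemmas \ref{calculation_v} and \ref{calculation_w} become non-positive and can be dropped, giving clean one-sided estimates for $\mathcal{L}(v^{1-\ell})$ and $(1-\Delta)(w^{1-\ell})$. Coupling these with the auxiliary identity \eqref{eqn:parabw}, testing $(v^{1-\ell}-Nw^{1-\ell})^-$, and invoking Lemma \ref{lemm:Minvw} once more to control $\gamma(v)$ and $w^{-\ell}\le w_*^{-\ell}$, I expect a Gronwall estimate forcing $v^{1-\ell}\ge N^{-1}w^{1-\ell}$ on $\Omega\times(0,\infty)$, equivalent to the desired $w\le N^{1/(\ell-1)}v$ with $d_2:=N^{1/(\ell-1)}$.

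The main obstacle is the nonlocal term $(1-\Delta_N)^{-1}(\gamma(v)u)$ in $\tau w_t$: because it has no definite sign pointwise, a direct pointwise maximum principle between $v$ and $w$ is unavailable, and pointwise estimates of the resolvent generally fail. The way through is to remain at the level of integrated quantities and exploit two structural features, namely (i) the positivity and self-adjointness of $(1-\Delta_N)^{-1}$, which let the nonlocal term be paired against the test function with a favourable sign, and (ii) the uniform positive lower bound $v\ge v_*$ from Lemma \ref{lemm:Minvw}, which is what keeps $\gamma(v)$ bounded and prevents the Gronwall constant from degenerating. Without this lower bound on $v$, the bounded-coefficient structure collapses and the whole comparison argument breaks down.
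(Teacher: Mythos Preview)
Your proposal has a genuine gap in the upper-bound step $w\le d_2 v$. First, a bookkeeping error: for $\ell>1$ the inequality $v^{1-\ell}\ge N^{-1}w^{1-\ell}$ is equivalent to $v\le N^{1/(\ell-1)}w$, \emph{not} to $w\le N^{1/(\ell-1)}v$; so your argument, even if it went through, would only reproduce the $d_1$ direction. More importantly, the structural difficulty you correctly identify---the nonlocal term $(1-\Delta_N)^{-1}(\gamma(v)u)$ entering through $\tau w_t$---does not disappear by passing to powers $v^{1-\ell}$, $w^{1-\ell}$ and testing. When you compute $\mathcal{L}(w^{1-\ell})$, the time derivative produces $\tau(1-\ell)w^{-\ell}w_t$, and substituting \eqref{eqn:parabw} still leaves a term of the form $-\tau(1-\ell)w^{-\ell}(1-\Delta_N)^{-1}(\gamma(v)u)$ with the \emph{wrong} sign (positive, since $1-\ell<0$). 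After testing and using self-adjointness you are left with $\int_\Omega \gamma(v)u\,(1-\Delta_N)^{-1}(w^{-\ell}\Phi)\,dx$, which is not controlled by $\|\Phi\|_{L^2}^2$ because no a priori bound on $\|\gamma(v)u\|_{L^2}$ is available at this stage. Gronwall therefore does not close. (A smaller issue: in your $d_1$ argument, using only $\gamma(v)\le v_\ast^{-k}$ forces the unwanted restriction $\tau v_\ast^{-k}<1$; one should instead exploit $v>Mw\ge Mw_\ast$ on $\{\psi>0\}$, which makes the coefficient $(1-M)+\tau M^{1-k}w_\ast^{-k}$ negative for $M$ large.)

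The paper's route is different and hinges on a pointwise cancellation rather than $L^2$ testing. From Lemma~\ref{calculation_v} one computes $\mathcal{L}v^{1-k}$ (or $\mathcal{L}\log v$ when $k=1$) and then applies the resolvent $(1-\Delta_N)^{-1}$ to this identity; the point is that this produces exactly the term $\tau(1-\Delta_N)^{-1}(u/v^k)$ on the right-hand side, which \emph{cancels} the identical term coming from $\mathcal{L}w=u-\tau u/v^k+\tau(1-\Delta_N)^{-1}(u/v^k)$ (a consequence of \eqref{w-eq} and \eqref{eqn:parabw}). After this cancellation the comparison principle for the heat operator $\tau\partial_t-\Delta+1$ applies pointwise and yields $w\le Cv+\frac{\tau}{|1-k|}(1-\Delta_N)^{-1}v^{1-k}$ (respectively with $\log v$). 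The remainder $(1-\Delta_N)^{-1}v^{1-k}$ is then bounded pointwise by $w^{1-k}$ via Lemma~\ref{calculation_w} and the already-established bound $v\le d_1^{-1}w$, closing the estimate. This resolvent-cancellation trick is the missing idea in your sketch.
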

\begin{proof}
By \cite[Lemma 7 and Remark 9]{FJ21}, we can find a constant $C>0$ satisfying 
\[
 v \leq C(w + 1) \quad \mbox{ in } \Omega \times (0,\infty),
\]
from which together with Lemma \ref{lemm:Minvw} it follows that 
\begin{equation}
 \label{eqn:Exisd1}
v \leq C(w +1) \leq C\left( 1 + \frac{1}{w_\ast} \right) w \quad \mbox{ in } \Omega \times (0,\infty).
\end{equation} 
Then we can choose the constant $d_1$ as $\left(C+\frac{C}{w_\ast}\right)^{-1}$.

Next, we shall focus on the constant $d_2$.  
We divide the proof into three cases: 
{\bf (i)} $k\in(0,1)$, {\bf (ii)} $ k=1$, {\bf (iii)} $k>1$.

\medskip
{\bf (i)} We assume $k \in (0,1)$.  By Lemma \ref{calculation_v} it follows
\begin{eqnarray*}
\left(
\tau \frac{\partial}{\partial t} - \Delta + 1\right)v^{1-k} 
 =  (1-k) \frac{u}{v^k} + kv^{1-k} + \frac{k(1-k)}{v^{1+k}} |\nabla v|^2,
\end{eqnarray*}
and by taking $\frac{\tau}{1-k}(1-\Delta_N)^{-1}$ to the above equation, we obtain that 
\begin{eqnarray}
\label{eqn:heateqnv1-k}
&& 
\nn
\left(
\tau \frac{\partial}{\partial t} - \Delta + 1\right)
 \left\{ \frac{\tau}{1-k} (1-\Delta_N)^{-1}v^{1-k} \right\} \\
& = & \tau (1-\Delta_N)^{-1} \left( \frac{u}{v^k} \right) 
+ \frac{\tau}{1-k} (1-\Delta_N)^{-1} \left\{ kv^{1-k} + \frac{(1-k)k}{v^{1+k}} |\nabla v|^2 \right\}.
\end{eqnarray}
On the other hand, it follows from \eqref{w-eq} and Lemma \ref{lemm:eqnw} that 
\begin{equation}
\label{eqn:heateqnw}
\left(\tau \frac{\partial}{\partial t} - \Delta +1 \right) w= u - \tau \frac{u}{v^k} + \tau (1-\Delta_N)^{-1} \left( \frac{u}{v^k} \right),
\end{equation}
which together with (\ref{eqn:heateqnv1-k}) yields that 
\begin{eqnarray*}
&& 
\left(\tau \frac{\partial}{\partial t} - \Delta +1 \right)
\left\{ w - \frac{\tau}{1-k} (1-\Delta_N)^{-1} v^{1-k} \right\}  \\
& = & u - \left\{ \tau \frac{u}{v^k} + \frac{\tau k}{1-k} (1-\Delta_N)^{-1} v^{1-k} +  (1-\Delta_N)^{-1} \left( \frac{\tau k}{v^{1+k}}|\nabla v|^2 \right) \right\}.
\end{eqnarray*}
In light of the nonnegativity of $u$ and $v$ and the maximum principle, it follows
\begin{eqnarray*}
\left(\tau \frac{\partial}{\partial t} - \Delta +1 \right)
\left\{ w - \frac{\tau}{1-k} (1-\Delta_N)^{-1} v^{1-k} \right\}  \leq u= \left(\tau \frac{\partial}{\partial t} - \Delta +1 \right)v.
\end{eqnarray*}
Moreover, the function $\left\{ w - \frac{\tau}{1-k} (1-\Delta_N)^{-1} v^{1-k} \right\}$ satisfies the Neumann boundary condition and by the maximum principle, we see
\begin{eqnarray*}
\left\{ w - \frac{\tau}{1-k} (1-\Delta_N)^{-1} v^{1-k} \right\} \bigg|_{t=0}
& = & (1-\Delta_N)^{-1} u_0 - \frac{\tau}{1-k} (1-\Delta_N)^{-1} v_0^{1-k} \\
& \leq & \|u_0\|_{L^\infty} \\
&\leq & \left(1+ \frac{\|u_0\|_{L^\infty}}{v_\ast} \right)v_0. 
\end{eqnarray*}
Therefore we have
\begin{align*}
\begin{cases}
&\left(\tau \frac{\partial}{\partial t} - \Delta +1 \right)
\left\{ w - \frac{\tau}{1-k} (1-\Delta_N)^{-1} v^{1-k} \right\}  
\leq 
\left( \tau \frac{\partial}{\partial t} - \Delta +1 \right)  \left(1+ \frac{\|u_0\|_{L^\infty}}{v_\ast} \right)v,\\[5mm]
&\left\{ w - \frac{\tau}{1-k} (1-\Delta_N)^{-1} v^{1-k} \right\} \bigg|_{t=0}
\leq 
 \left(1+ \frac{\|u_0\|_{L^\infty}}{v_\ast} \right)v\bigg|_{t=0},\\[5mm]
& \frac{\partial }{\partial \nu } \left\{ w - \frac{\tau}{1-k} (1-\Delta_N)^{-1} v^{1-k} \right\}  
 = \frac{\partial }{\partial \nu }  \left(1+ \frac{\|u_0\|_{L^\infty}}{v_\ast} \right)v =0,
 \end{cases}
\end{align*}
and we can employ the comparison principle to obtain  
\[
 w - \frac{\tau}{1-k} (1-\Delta_N)^{-1} v^{1-k}
\leq \left(1+ \frac{\|u_0\|_{L^\infty}}{v_\ast} \right) v,
\]
that is, 
\begin{equation}
\label{eqn:est1w}
 w 
\leq \left(1+ \frac{\|u_0\|_{L^\infty}}{v_\ast} \right) v + \frac{\tau}{1-k} (1-\Delta_N)^{-1} v^{1-k}.
\end{equation}
To complete the proof, we need to estimate the second term of the right-hand side in the above. 
By Lemma \ref{calculation_w}, it follows
\begin{eqnarray*}
(- \Delta +1) w^{1-k}
 = 
(1-k) \frac{u}{w^k} + kw^{1-k} + \frac{k(1-k)}{w^{k+1}} |\nabla w|^2,
\end{eqnarray*}
which together with (\ref{eqn:Exisd1}) implies
\begin{eqnarray*}
(- \Delta +1) w^{1-k} 
\geq   kd_1^{1-k }v^{1-k}.
\end{eqnarray*}
By the comparison theorem, we imply that 
\[
w^{1-k} \geq kd_1^{1-k} (1-\Delta_N)^{-1} v^{1-k}.
\]
We deduce from the above and (\ref{eqn:est1w}) that 
\begin{eqnarray}\label{eqn:est11w}
\nn
w  & \leq&
 \left(1+ \frac{\|u_0\|_{L^\infty}}{v_\ast} \right) v + \frac{\tau}{(1-k)kd_1^{1-k}} w^{1-k}\\
&=& \left(1+ \frac{\|u_0\|_{L^\infty}}{v_\ast} \right) v + \frac{\tau}{(1-k)kd_1^{1-k}} \cdot \frac{1}{w^k} w. 
\end{eqnarray}
If 
$$
 \frac{\tau}{(1-k)kd_1^{1-k}} \cdot \frac{1}{w^k} \leq \frac{1}{2},
 \quad \mbox{that is}, \quad
w^k \geq \frac{2\tau}{(1-k)kd_1^{1-k}},$$
then \eqref{eqn:est11w} implies that 
\[
w \leq 2 \left(1+ \frac{\|u_0\|_{L^\infty}}{v_\ast} \right)v;
\]
Otherwise if 
$$w^k \leq \frac{2\tau}{(1-k)kd_1^{1-k}},$$
it follows from \eqref{eqn:est11w} that
\[
w  \leq  \left( 1+ \frac{\|u_0\|_{L^\infty}}{v_\ast} \right) v 
+ \frac{\tau}{(1-k)kd_1^{1-k}}\left(\frac{2\tau}{k(1-k)d_1^{1-k}}\right)^{(1-k)/k}.
\]
Therefore combining these estimates and using Lemma \ref{lemm:Minvw} we obtain that 
\begin{eqnarray*}
w & \leq & 2 \left( 1+ \frac{\|u_0\|_{L^\infty}}{v_\ast} \right) v + \frac{\tau}{(1-k)kd_1^{1-k}}\left(\frac{2\tau}{k(1-k)d_1^{1-k}}\right)^{(1-k)/k} \\
& \leq & 2 \left( 1+ \frac{\|u_0\|_{L^\infty}}{v_\ast} \right) v + \frac{\tau}{(1-k)kd_1^{1-k}v_\ast}\left(\frac{2\tau}{k(1-k)d_1^{1-k}}\right)^{(1-k)/k} v,
\end{eqnarray*}
which guarantees the existence of the constant $d_2$ in the case where $k \in (0,1)$.

\medskip
{\bf (ii)} We assume $k=1$.  By the direct calculations, it follows
\begin{eqnarray*}
\left(\tau \frac{\partial}{\partial t} - \Delta +1 \right) \log v 
& = & \tau \frac{v_t}{v} - \frac{1}{v} \Delta v
+ \frac{1}{v^2} |\nabla v|^2 + \log v \\
& = & \frac{1}{v} \left( \tau v_t - \Delta v + v \right) - \frac{v}{v} + \frac{1}{v^2}|\nabla v|^2 + \log v \\
& = & \frac{u}{v} -1 + \frac{1}{v^2} |\nabla v|^2 + \log v.
\end{eqnarray*}
Putting $A:=1-\log v_\ast$, we can confirm that
\begin{eqnarray*}
\left(\tau \frac{\partial}{\partial t} - \Delta +1 \right) \left( A + \log v \right)
& = & A +\frac{u}{v}  -1  + \frac{1}{v^2} |\nabla v|^2 + \log v \\
&\geq& \frac{u}{v} + \frac{1}{v^2} |\nabla v|^2
\end{eqnarray*}
since Lemme \ref{lemm:Minvw} implies $A-1+\log v \geq 0$. 
Hence by taking $\tau(1-\Delta_N)^{-1} $ to the above, it follows
$$
\left(\tau \frac{\partial}{\partial t} - \Delta +1 \right)
\left\{\tau  (1-\Delta_N)^{-1} (A+\log v) \right\}
\geq \tau (1-\Delta_N)^{-1} \left( \frac{u}{v} + \frac{1}{v^2} |\nabla v|^2\right).
$$
Combining the above with \eqref{eqn:heateqnw} we obtain
\begin{eqnarray*}
&&\left(\tau \frac{\partial}{\partial t} - \Delta +1 \right)
\left\{ w- \tau(1-\Delta_N)^{-1} (A+\log v) \right\}\\
& \leq & \left\{ u - \tau \frac{u}{v} + \tau (1-\Delta_N)^{-1} \frac{u}{v} \right\} 
- \tau (1-\Delta_N)^{-1} \left( \frac{u}{v} + \frac{1}{v^2} |\nabla v|^2\right) \\
& = & u - \tau \left\{ \frac{u}{v} + (1-\Delta_N)^{-1} \left( \frac{1}{v^2} |\nabla v|^2 \right) \right\},
\end{eqnarray*}
and by the nonegativity of $u$ and $v$ and the comparison principle, it follows
\begin{eqnarray*}
\left(\tau \frac{\partial}{\partial t} - \Delta +1 \right)
\left\{ w- \tau(1-\Delta_N)^{-1} (A+\log v) \right\}
\leq u.
\end{eqnarray*} 
We also employ the comparison principle to see
\begin{eqnarray*}
\left\{ w- \tau(1-\Delta_N)^{-1} (A+\log v) \right\}\bigg|_{t=0}
& \leq & (1-\Delta_N)^{-1} u_0 \\
& \leq & \|u_0\|_{L^\infty} \\
&\leq& \left( 1+ \frac{\|u_0\|_{L^\infty}}{v_\ast} \right) v_0.
\end{eqnarray*}
Therefore we have
\begin{align*}
\begin{cases}
&\left(\tau \frac{\partial}{\partial t} - \Delta +1 \right)
\left\{ w- \tau(1-\Delta_N)^{-1} (A+\log v) \right\}
\leq 
\left( \tau \frac{\partial}{\partial t} - \Delta +1 \right)  \left(1+ \frac{\|u_0\|_{L^\infty}}{v_\ast} \right)v,\\[5mm]
&\left\{ w- \tau(1-\Delta_N)^{-1} (A+\log v) \right\}\bigg|_{t=0}
\leq 
 \left(1+ \frac{\|u_0\|_{L^\infty}}{v_\ast} \right)v\bigg|_{t=0}, \\[5mm]
& \frac{\partial }{\partial \nu } 
\left\{ w- \tau(1-\Delta_N)^{-1} (A+\log v) \right\}
 = \frac{\partial }{\partial \nu }  \left(1+ \frac{\|u_0\|_{L^\infty}}{v_\ast} \right)v =0,
 \end{cases}
\end{align*}
and then by the comparison principle it follows
\begin{equation}
\label{eqn:est2w}
w  \leq \left( 1+ \frac{\|u_0\|_{L^\infty}}{v_\ast} \right) v +  \tau (1-\Delta_N)^{-1} \left( 1 + \log \frac{v}{v_\ast} \right).
\end{equation}
We focus on the second term of the right-hand side in the above. 
By the direct calculations we see that 
\begin{eqnarray}\label{w_log}
\nn
(-\Delta +1)\log w 
& = & - \frac{1}{w} \Delta w + \frac{1}{w^2} |\nabla w|^2 + \log w \\
\nn
& = & \frac{1}{w} \left( - \Delta w + w \right) -1 + \frac{1}{w^2} |\nabla w|^2 + \log w \\
& = & \frac{u}{w} -1 + \log w + \frac{1}{w^2} |\nabla w|^2,
\end{eqnarray}
and by \eqref{eqn:Exisd1} it follows
\begin{eqnarray*}
(-\Delta +1) ( \log w + 2- \log (d_1v_\ast)) & = & \frac{u}{w} + \frac{1}{w^2}|\nabla w|^2 +1+  \log \frac{w}{d_1v_\ast} \\
&\geq & 1+ \log \frac{d_1 v}{d_1v_\ast} \\
&= &1+ \log \frac{v}{v_\ast} .
\end{eqnarray*}
Hence by the comparison principle we have
\[
\log w + 2- \log (d_1v_\ast)  \geq (1-\Delta_N)^{-1} \left(1+ \log \frac{v}{v_\ast}\right),
\]
which together with (\ref{eqn:est2w}) implies that 
\[
w  \leq  \left( 1 + \frac{\|u_0\|_{L^\infty}}{v_\ast}  \right)v 
+\tau (\log w + 2- \log (d_1v_\ast)).
\]
Since 
\[
\frac{s}{2} - \tau \log s \geq \tau -\tau  \log (2\tau) \quad \mbox{ for } s>0,
\]
we have
\[
\frac{w}{2}  \leq   \left( 1 + \frac{\|u_0\|_{L^\infty}}{v_\ast}  \right) v 
+ \tau (1+  \log (2\tau)- \log (d_1v_\ast))
\]
and by Lemma \ref{lemm:Minvw} it follows 
\begin{eqnarray*}
w \leq   2\left( 1 + \frac{\|u_0\|_{L^\infty}}{v_\ast}  \right) v 
+  \frac{2\tau|1+  \log (2\tau)- \log (d_1v_\ast)|}{v_\ast} v.
\end{eqnarray*}
Then we find the constant $d_2$ in the case where $k=1$. 

\medskip
{\bf (iii)} We assume $k>1$. By Lemma \ref{calculation_v} it follows
\begin{eqnarray*}
 \left(\tau \frac{\partial}{\partial t} - \Delta +1 \right) v^{1-k} 
& =&   (1-k) \frac{u}{v^k} + \frac{k}{v^{k-1}} + \frac{(1-k)k}{v^{k+1}} |\nabla v|^2\\
& \leq&   -(k-1) \frac{u}{v^k} + \frac{k}{v^{k-1}},
\end{eqnarray*} 
and then by taking $\frac{\tau}{k-1} (1-\Delta_N)^{-1}$ we have
\begin{eqnarray}\label{largekineq}
\nn
&& \left(\tau \frac{\partial}{\partial t} - \Delta +1 \right) 
 \left( \frac{\tau}{k-1} (1-\Delta_N)^{-1} v^{1-k} \right)\\
& \leq &
-\tau (1-\Delta_N)^{-1} \left( \frac{u}{v^k} \right) 
 + \frac{\tau k}{k-1} (1-\Delta_N)^{-1} \left( \frac{1}{v^{k-1}} \right).
\end{eqnarray}
By \eqref{eqn:heateqnw} and \eqref{largekineq}, it follows
\begin{eqnarray*}
 &&  \left(\tau \frac{\partial}{\partial t} - \Delta +1 \right) 
  \left\{ w + \frac{\tau}{k-1} (1-\Delta_N)^{-1} \left( \frac{1}{v^{k-1}} \right) \right\} \\
&\leq & \left( u -\tau \frac{u}{v^k} +\tau(1-\Delta_N)^{-1} \left( \frac{u}{v^k} \right) \right)
-\tau (1-\Delta_N)^{-1} \left( \frac{u}{v^k} \right) + \frac{\tau k}{k-1} (1-\Delta_N)^{-1} \left(  \frac{1}{v^{k-1}} \right) \\
& = & u -\tau \frac{u}{v^k} + \frac{\tau k}{k-1} (1-\Delta_N)^{-1} \left( \frac{1}{v^{k-1}} \right).
\end{eqnarray*}
Taking 
$$B :=\frac{\tau k}{k-1}(1-\Delta_N)^{-1} \left( \frac{1}{v^{k-1}_\ast} \right) 
= \frac{\tau k}{k-1} \cdot \frac{1}{v_\ast^{k-1}},$$
we see that 
\begin{eqnarray*}
 &&  \left(\tau \frac{\partial}{\partial t} - \Delta +1 \right) 
  \left\{ w + \frac{\tau}{k-1} (1-\Delta_N)^{-1} \left( \frac{1}{v^{k-1}} \right) -B\right\} \\
&\leq & u -\tau \frac{u}{v^k} + \frac{\tau k}{k-1} (1-\Delta_N)^{-1} \left( \frac{1}{v^{k-1}} \right) -B \\
& = &  u -\tau \frac{u}{v^k} - \frac{\tau k}{k-1} (1-\Delta_N)^{-1} \left(\frac{1}{v^{k-1}_\ast} - \frac{1}{v^{k-1}} \right)  \\
& \leq & u,
\end{eqnarray*}
where we employed Lemma \ref{lemm:Minvw} and the comparison principle to see
$$
(1-\Delta_N)^{-1} \left(\frac{1}{v^{k-1}_\ast} - \frac{1}{v^{k-1}} \right)  \geq 0.
$$
Similarly by the comparison principle it follows
\begin{eqnarray*}
&&  \left\{ w + \frac{\tau}{k-1} (1-\Delta_N)^{-1} \frac{1}{v^{k-1}} -B\right\}   \bigg|_{t=0}\\
 & = & (1-\Delta)^{-1}u_0
+ \frac{\tau }{k-1}(1-\Delta_N)^{-1} \left( \frac{1}{v_0^{k-1}} - \frac{k}{v_\ast^{k-1}} 
\right) \\
& \leq & (1-\Delta_N)^{-1}u_0\\
& \leq& \|u_0\|_{L^\infty}\\
& \leq& \left( 1+ \frac{\|u_0\|_{L^\infty}}{v_\ast}\right) v_0.  
\end{eqnarray*}
Therefore we have
\begin{align*}
\begin{cases}
&\left(\tau \frac{\partial}{\partial t} - \Delta +1 \right)
 \left\{ w + \frac{\tau}{k-1} (1-\Delta_N)^{-1} \left( \frac{1}{v^{k-1}} \right) -B\right\}
\leq 
\left( \tau \frac{\partial}{\partial t} - \Delta +1 \right)  \left(1+ \frac{\|u_0\|_{L^\infty}}{v_\ast} \right)v,\\[5mm]
& \left\{ w + \frac{\tau}{k-1} (1-\Delta_N)^{-1} \left( \frac{1}{v^{k-1}} \right) -B\right\}  \bigg|_{t=0}
\leq 
 \left(1+ \frac{\|u_0\|_{L^\infty}}{v_\ast} \right)v\bigg|_{t=0}, \\[5mm]
& \frac{\partial }{\partial \nu } 
 \left\{ w + \frac{\tau}{k-1} (1-\Delta_N)^{-1} \left( \frac{1}{v^{k-1}} \right) -B\right\}
 = \frac{\partial }{\partial \nu }  \left(1+ \frac{\|u_0\|_{L^\infty}}{v_\ast} \right)v =0,
 \end{cases}
\end{align*}
and the comparison theorem implies that 
\[
w - \frac{\tau k}{(k-1)v_\ast^{k-1}} + \frac{\tau}{k-1} (1-\Delta_N)^{-1}  \left( \frac{1}{v^{k-1}} \right)
 \leq \left(1+ \frac{\|u_0\|_{L^\infty}}{v_\ast} \right) v 
\]
and then by Lemma \ref{lemm:Minvw} it follows
\begin{eqnarray*}
w 
  & \leq & 
 \left(1+ \frac{\|u_0\|_{L^\infty}}{v_\ast} \right)  v 
 + \frac{\tau k}{(k-1)v^{k-1}_\ast}  \\
& \leq & \left( 1+ \frac{\|u_0\|_{L^\infty}}{v_\ast} +\frac{\tau k}{(k-1)v^{k}_\ast}  \right)v,
\end{eqnarray*}
which guarantees the constant $d_2$ in the case where $k>1$.
Therefore the proof is complete. 
\end{proof}

\begin{rema}\label{remA-1}
Lemma \ref{lemm:Estvw} also holds true when we assume {\bf (A1)$^\prime$}. 
Here we focus on the case $k\in (0,1)$.  
Instead of \eqref{eqn:heateqnw}, we have 
\begin{equation*}
\left(\tau \frac{\partial}{\partial t} - \Delta +1 \right) w
= u - \tau u \gamma (v) + \tau (1-\Delta_N)^{-1} \left( u \gamma (v) \right).
\end{equation*}
Since the maximum principle and {\bf (A1)$^\prime$} imply
$$
(1-\Delta_N)^{-1} \left( u \gamma (v) \right) 
\leq \Gamma_{max} (1-\Delta_N)^{-1} \left( \frac{u}{v^k} \right),
$$
it follows
$$
\left(\tau \frac{\partial}{\partial t} - \Delta +1 \right) w
= u - \tau u \gamma (v) 
+  \Gamma_{max} \tau (1-\Delta_N)^{-1} \left( \frac{u}{v^k} \right).
$$
Multiplying \eqref{eqn:heateqnv1-k} by $\Gamma_{max}$ and combining this with the above, we have
\begin{eqnarray*}
\left(\tau \frac{\partial}{\partial t} - \Delta +1 \right)
\left\{ w - \Gamma_{max} \frac{\tau}{1-k} (1-\Delta_N)^{-1} v^{1-k} \right\}  \leq u.
\end{eqnarray*}
Proceeding the same procedure, we can prove the existence of $d_2$.
\end{rema}

Thanks to the previous lemma, we can derive upper estimates for $(1-\Delta_N)^{-1} (u\gamma (v))$.
\begin{lemm}
 \label{lemm:bounduvk}
Assume $\tau>0$ and $\gamma (v)=v^{-k}$ with $k>0$.
Let $d_1$ and $d_2$ be the constants in Lemma \ref{lemm:Estvw}.  
The following estimates hold:
\begin{enumerate}
\item[{\bf (i)}] If $k \in (0,1)$, 
\[
 (1-\Delta_N)^{-1} \frac{u}{v^k} \leq \frac{d_2^k}{1-k} w^{1-k}.
\]
\item[{\bf (ii)}]  If $k=1$,
\[
 (1-\Delta_N)^{-1} \frac{u}{v} \leq d_2 \left( 1 + \log \frac{w}{w_\ast} \right).
\]
\item[{\bf (iii)}]  If $k>1$,
\[
(1-\Delta_N)^{-1} \frac{u}{v^k} \leq  \frac{d_2^k}{k-1} \left( \frac{k}{w_\ast^{k-1}} - \frac{1}{w^{k-1}}\right).  
\]
\end{enumerate}
\end{lemm}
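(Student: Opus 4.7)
The plan is to treat each case by applying Lemma \ref{calculation_w} with $\ell=k$ (or, for $k=1$, by passing to $\log w$), dropping the nonnegative gradient-squared term to obtain a one-sided pointwise inequality, rewriting it as an upper bound on $u/w^k$ in terms of $(-\Delta+1)$ applied to an explicit function of $w$, and then using the sandwich $d_1 v \le w \le d_2 v$ from Lemma \ref{lemm:Estvw} to convert $u/w^k$ into $u/v^k$. A final application of $(1-\Delta_N)^{-1}$, together with the maximum/comparison principle and the pointwise lower bound $w\ge w_\ast$ from Lemma \ref{lemm:Minvw}, yields the stated bounds.

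For case \textbf{(i)} $k\in(0,1)$, Lemma \ref{calculation_w} with $\ell=k$ gives
\[
(-\Delta+1)w^{1-k}=(1-k)\tfrac{u}{w^k}+kw^{1-k}+\tfrac{k(1-k)}{w^{k+1}}|\nabla w|^{2}\ge (1-k)\tfrac{u}{w^k}.
\]
Combining with $w\le d_2 v$, so $u/v^{k}\le d_2^{k}\,u/w^{k}$, and inverting $(-\Delta+1)$ (with the Neumann boundary condition automatically preserved) produces the claimed bound by $\frac{d_2^k}{1-k}w^{1-k}$.

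For case \textbf{(iii)} $k>1$, the same identity from Lemma \ref{calculation_w} has the sign of the gradient term reversed, so dropping it yields
\[
(-\Delta+1)w^{1-k}\le -(k-1)\tfrac{u}{w^{k}}+kw^{1-k},
\]
i.e.\ $(k-1)u/w^{k}\le (-\Delta+1)(-w^{1-k})+kw^{1-k}$. Applying $(1-\Delta_N)^{-1}$ and using $w\ge w_\ast$ with $1-k<0$ (so $w^{1-k}\le w_\ast^{1-k}$, and hence $(1-\Delta_N)^{-1}w^{1-k}\le w_\ast^{1-k}$ by the maximum principle) gives $(k-1)(1-\Delta_N)^{-1}(u/w^{k})\le k/w_\ast^{k-1}-1/w^{k-1}$; the estimate $u/v^{k}\le d_2^{k}u/w^{k}$ then finishes the case.

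The delicate case is \textbf{(ii)} $k=1$, which is the obstacle to a uniform treatment because $w^{1-k}$ degenerates. I would work with $\log w$ in place of $w^{1-k}$: as already computed in \eqref{w_log},
\[
(-\Delta+1)\log w=\tfrac{u}{w}-1+\log w+\tfrac{1}{w^{2}}|\nabla w|^{2}.
\]
The trick is to absorb the constant $-1$ by shifting with $\log w_\ast$: since $w\ge w_\ast$, one has $\log(w/w_\ast)\ge 0$, hence
\[
(-\Delta+1)\bigl(\log w-\log w_\ast+1\bigr)=\tfrac{u}{w}+\log\tfrac{w}{w_\ast}+\tfrac{1}{w^{2}}|\nabla w|^{2}\ge \tfrac{u}{w}.
\]
Inverting and using $u/v\le d_2\,u/w$ from Lemma \ref{lemm:Estvw} yields the stated logarithmic bound. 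The verification that each auxiliary function satisfies the Neumann boundary condition (immediate from $\partial_\nu w=0$) and that $(1-\Delta_N)^{-1}$ acts as claimed on constants ($(1-\Delta_N)^{-1}c=c$) are routine.
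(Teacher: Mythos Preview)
Your proposal is correct and follows essentially the same approach as the paper's proof: in each case you use the identity from Lemma~\ref{calculation_w} (or \eqref{w_log} when $k=1$), drop the sign-definite gradient term, insert the comparison $w\le d_2 v$ to pass from $u/w^k$ to $u/v^k$, and invoke the maximum principle for $(1-\Delta_N)^{-1}$. The only cosmetic difference is in case \textbf{(iii)}: the paper first bounds the lower-order term $kw^{1-k}$ pointwise by the constant $k/w_\ast^{k-1}$ and then applies the comparison principle once to the function $\frac{k}{w_\ast^{k-1}}-\frac{1}{w^{k-1}}$, whereas you apply $(1-\Delta_N)^{-1}$ to the full inequality and afterwards estimate $(1-\Delta_N)^{-1}w^{1-k}\le w_\ast^{1-k}$; both orderings lead to the same bound.
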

\begin{proof} 
We first assume $k \in (0,1)$. 
It follows from Lemma \ref{calculation_w} and Lemma \ref{lemm:Estvw} that 
\begin{eqnarray*}
\left(-\Delta +1\right)w^{1-k}
& = & (1-k) \frac{u}{w^k} + kw^{1-k} + \frac{k(1-k)}{w^{k+1}}|\nabla w|^2 \\
& \geq & \frac{(1-k)}{d_2^k} \frac{u}{v^k},  
\end{eqnarray*} 
then by the comparison principle we see that
\[
 w^{1-k} \geq \frac{1-k}{d_2^k} (1-\Delta_N)^{-1} \frac{u}{v^k}.
\]
Then {\bf (i)} holds. 

Next, we consider the case $k=1$.
By \eqref{w_log}, Lemma \ref{lemm:Minvw} and Lemma \ref{lemm:Estvw} it follows
\begin{eqnarray*}
\left(-\Delta +1\right)(\log w +1-\log w_\ast) 
& = & \frac{u}{w}   + \log \frac{w}{w_\ast} + \frac{1}{w^2} |\nabla w|^2  \\
& \geq & \frac{u}{d_2v},
\end{eqnarray*}
thus we have 
\[
 \log w + 1 - \log w_\ast \geq \frac{1}{d_2} (1-\Delta_N)^{-1} \frac{u}{v},
\]
which concludes {\bf (ii)}.

Finally, we assume $k>1$.  From Lemma \ref{calculation_w} we have
\begin{eqnarray*}
\left(-\Delta +1\right)w^{1-k}
& = & (1-k) \frac{u}{w^k} + kw^{1-k} + \frac{k(1-k)}{w^{k+1}} |\nabla w|^2\\
&\leq &-\frac{(k-1)}{d_2^k} \frac{u}{v^k}+ kw^{1-k}.
\end{eqnarray*}
Then we deduce from the above inequality, Lemma \ref{lemm:Minvw} and Lemma \ref{lemm:Estvw} that 
\begin{eqnarray*}
\left(-\Delta +1\right) \left( \frac{k}{w^{k-1}_\ast} - \frac{1}{w^{k-1}}\right)
& \geq & \frac{(k-1)}{d_2^k} \frac{u}{v^k} + \frac{k}{w^{k-1}_\ast} - \frac{k}{w^{k-1}}\\
&\geq&\frac{(k-1)}{d_2^k} \frac{u}{v^k}.
\end{eqnarray*}
Taking $(1-\Delta_N)^{-1} $ to the above, we complete the proof of {\bf (iii)}.
\end{proof}
%
%
\begin{rema}\label{remA-11}
Lemma \ref{lemm:bounduvk} holds true when we assume {\bf (A1)$^\prime$} 
since the above proof is based on Lemma \ref{lemm:Estvw}, which also holds true under {\bf (A1)$^\prime$} as noted in Remark \ref{remA-1}. 
\end{rema}
%
%
%
\section{Boundedness of $v$}
Let $(u,v)$ be a solution of (\ref{eqn:kinemod}) in $\Omega \times (0,\infty)$ and let $w$ be the function defined in (\ref{eqn:defw}).  We focus on the evolution equation \eqref{eqn:evolutionw}:
\begin{eqnarray*}
 w_t + \gamma (v) (\Delta w-w) = (1-\Delta_N)^{-1} (\gamma (v) u) \quad \mbox{ in } \Omega \times (0,\infty).
\end{eqnarray*}
Invoking the estimates for nonlinear terms established in Lemma \ref{lemm:bounduvk}, 
we will show the boundedness of $v$ and $w$ in this section. 
First of all, we shall show the boundedness of $\|v\|_{L^p(\Omega)}$ and $\|w\|_{L^p(\Omega)}$ for any $p\in[1,\infty)$.
\begin{lemm}
 \label{lemm:boundvLp}
Let $\tau>0$.  Assume {\bf (A1)} and {\bf (A2)}. 
For any $p \geq 1$, $L^p(\Omega)$ norms of the functions $v$ and $w$ are uniformly bounded on $[0,\infty)$, i.e.,
\[
 \sup_{t \geq 0} \|v(\cdot,t)\|_{L^p} < \infty
 \quad\mbox{and}\quad
   \sup_{t \geq 0} \|w(\cdot,t)\|_{L^p} < \infty.
\]
\end{lemm}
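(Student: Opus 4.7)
My plan is to test the auxiliary identity \eqref{eqn:parabw} against $w^{p-1}$ and to combine the resulting energy estimate with the refined comparison $d_1 v \le w \le d_2 v$ of Lemma \ref{lemm:Estvw} and the pointwise bounds on $(1-\Delta_N)^{-1}(\gamma(v)u)$ of Lemma \ref{lemm:bounduvk}. Multiplying $w_t + \gamma(v)u = (1-\Delta_N)^{-1}(\gamma(v)u)$ by $w^{p-1}$ and integrating yields, for $p > 1 + k$,
$$\frac{1}{p}\frac{d}{dt}\int_\Omega w^p\,dx + \int_\Omega \gamma(v) u\, w^{p-1}\,dx = \int_\Omega w^{p-1}(1-\Delta_N)^{-1}(\gamma(v)u)\,dx.$$
The refined comparison gives $\gamma(v) = v^{-k} \ge d_1^k w^{-k}$; substituting $u = w - \Delta w$ from \eqref{w-eq} and integrating by parts (the boundary term vanishes because $\partial_\nu w = 0$), the left-hand integral dominates
$$d_1^k \int_\Omega w^{p-k}\,dx + d_1^k(p-1-k)\int_\Omega w^{p-k-2}|\nabla w|^2\,dx,$$
so that both a mass contribution of order $w^{p-k}$ and a gradient dissipation become available.

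On the right-hand side, Lemma \ref{lemm:bounduvk} furnishes the pointwise bound $(1-\Delta_N)^{-1}(\gamma(v)u) \le F(w)$, where $F(w)$ equals $\frac{d_2^k}{1-k}w^{1-k}$ if $k<1$, a logarithmic function $d_2(1+\log(w/w_\ast))$ if $k=1$, and a constant if $k>1$. In each case the integrand $w^{p-1}F(w)$ grows strictly slower than $w^p$, and Young's inequality reduces the estimate to controlling $\int_\Omega w^{p-k}\,dx$. When the direct cancellation against the LHS $w^{p-k}$ term fails (i.e.\ when the constants $d_1, d_2$ are unfavourable), I invoke the Gagliardo-Nirenberg interpolation applied to $\phi := w^{(p-k)/2}$,
$$\|\phi\|_{L^2}^2 \le \epsilon\|\nabla \phi\|_{L^2}^2 + C_\epsilon \|\phi\|_{L^s}^2, \qquad s\in[1,2),$$
and absorb the $\epsilon$-part into the gradient dissipation obtained in the previous step. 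What remains is a lower-order norm $\|w\|_{L^\alpha}$ with $\alpha = s(p-k)/2 < p$.

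The resulting Gronwall-type inequality
$$\frac{d}{dt}\int_\Omega w^p\,dx + c\int_\Omega w^p\,dx \le C(p)\bigl(1 + \|w(t)\|_{L^\alpha}\bigr)^\beta$$
is closed by an induction on $p$: the base step combines the mass conservation of Proposition \ref{prop:TGexist} (yielding $\|w\|_{L^1} = \|u_0\|_{L^1}$ after integrating \eqref{w-eq}) with the Brezis-Strauss estimate, which gives $w \in L^q$ for every $q<\tfrac{n}{n-2}$. Iterating raises $\alpha$ step by step until any prescribed $p$ is reached, producing a uniform-in-time bound on $\|w(\cdot,t)\|_{L^p}$; the comparison $v \le w/d_1$ then transfers the bound to $v$. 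The main obstacle lies in ensuring the compatibility of the Gagliardo-Nirenberg exponents at every induction step, and this is precisely where the assumption $k < \tfrac{n}{n-2}$ is needed: it guarantees that the Brezis-Strauss base integrability overlaps with the range of $\alpha$ permitted in the interpolation, so that the iteration can actually propagate from the base case to arbitrarily large $p$.
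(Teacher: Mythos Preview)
Your overall strategy coincides with the paper's: test \eqref{eqn:parabw} against $w^{p-1}$, use Lemma~\ref{lemm:Estvw} to replace $\gamma(v)u$ by $d_1^k w^{-k}(-\Delta w + w)$ and integrate by parts (this is the paper's Lemma~\ref{lem:preLp}), then bound the right-hand side by Lemma~\ref{lemm:bounduvk} and close via interpolation and Brezis--Strauss. The skeleton is correct.

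There is, however, a concrete error in your handling of the cases $k\ge 1$. You assert that ``Young's inequality reduces the estimate to controlling $\int_\Omega w^{p-k}\,dx$'', and your Gagliardo--Nirenberg step, as written, interpolates only $\|\phi\|_{L^2}^2=\int w^{p-k}$. But for $k>1$ Lemma~\ref{lemm:bounduvk} gives $F(w)\le C$, so the right-hand side is $\sim\int w^{p-1}$, which is \emph{larger} than $\int w^{p-k}$; for $k=1$ it is $\sim\int w^{p-1}\log w$. Neither reduces to $\int w^{p-k}$, and your $L^2$ interpolation does not touch them. What is actually needed is to absorb $\int w^{p-1}$ (resp.\ $\int w^{p-1+\varepsilon}$) into the Sobolev term $\bigl(\int w^{(p-k)\frac{n}{n-2}}\bigr)^{(n-2)/n}$ coming from $\|\nabla w^{(p-k)/2}\|_{L^2}^2$; the inequality $(p-k)\tfrac{n}{n-2}>p-1$ holds for large $p$ precisely when $k<\tfrac{n}{n-2}$, and the residual low-order factor is controlled by Brezis--Strauss. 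This is what the paper does in Lemma~\ref{lem:key}.

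A second imprecision: your Gronwall inequality carries linear damping $c\int w^p$, which does not follow from the dissipation $\int w^{p-k}+\int|\nabla w^{(p-k)/2}|^2$ without further input. The paper avoids this by deriving instead a \emph{sublinear} ODI
\[
\frac{d}{dt}\int_\Omega w^{\ell+k+1}\,dx + c\Bigl(\int_\Omega w^{\ell+k+1}\,dx\Bigr)^{\frac{\ell+1}{\ell+k+1}} \le C
\]
(Lemma~\ref{lem:auxLp} combined with Lemma~\ref{lem:key}) and closing by ODE comparison for each fixed large $\ell$, with no induction on $p$ required. Your iteration scheme can be made to work once the interpolation exponents are corrected, but the paper's one-step closure is cleaner.
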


To prove Lemma \ref{lemm:boundvLp} we prepare three lemmas.

\begin{lemm}\label{lem:preLp}
Let $\tau>0$.  Assume {\bf (A1)} and {\bf (A2)}.
For any $p>k+1$ the following inequality holds:
 \begin{equation*}
 \frac{d}{dt} \int_\Omega w^p\, dx + \frac{4p(p-k-1) d_1^k}{(p-k)^2} 
\int_\Omega \left( |\nabla w^{\frac{p-k}{2}}|^2 + w^{p-k}\right)\, dx
   \leq p\int_\Omega \left\{ (1-\Delta_N)^{-1} \frac{u}{v^k} \right\} w^{p-1} \, dx.
\end{equation*}
\end{lemm}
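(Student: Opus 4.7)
The plan is to test the auxiliary identity \eqref{eqn:parabw} with $pw^{p-1}$, use the refined comparison estimate of Lemma \ref{lemm:Estvw} to convert the $v$-dependent weight $\gamma(v)$ into a $w$-weight, and then invoke the elliptic identity \eqref{w-eq} so that the $u$-factor becomes $w-\Delta w$, producing the desired gradient contribution after integration by parts.

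Multiplying \eqref{eqn:parabw} by $pw^{p-1}$ and integrating over $\Omega$ yields
\begin{equation*}
\frac{d}{dt}\int_\Omega w^p\,dx + p\int_\Omega \gamma(v)\,u\,w^{p-1}\,dx = p\int_\Omega \{(1-\Delta_N)^{-1}(\gamma(v)u)\}\,w^{p-1}\,dx,
\end{equation*}
so the right-hand side already coincides with the one in the statement (recalling $\gamma(v)u=u/v^k$). Since Lemma \ref{lemm:Estvw} gives $\gamma(v)=v^{-k}\geq d_1^k w^{-k}$, and since $u=w-\Delta w$ by \eqref{w-eq}, the dissipative term is estimated from below by
\begin{equation*}
p\int_\Omega \gamma(v)\,u\,w^{p-1}\,dx \;\geq\; pd_1^k\int_\Omega w^{p-k-1}(w-\Delta w)\,dx.
\end{equation*}
An integration by parts using the homogeneous Neumann condition on $w$ turns the right-hand side into
\begin{equation*}
pd_1^k\int_\Omega w^{p-k}\,dx + p(p-k-1)d_1^k\int_\Omega w^{p-k-2}|\nabla w|^2\,dx,
\end{equation*}
and the pointwise identity $|\nabla w^{(p-k)/2}|^2 = \frac{(p-k)^2}{4}w^{p-k-2}|\nabla w|^2$ rewrites the gradient integral as $\frac{4p(p-k-1)d_1^k}{(p-k)^2}\int_\Omega|\nabla w^{(p-k)/2}|^2\,dx$.

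To combine both contributions under a single prefactor, it suffices to invoke the elementary inequality $(p-k)^2\geq 4(p-k-1)$, equivalent to $(p-k-2)^2\geq 0$, which permits replacing $pd_1^k$ by the smaller constant $\frac{4p(p-k-1)d_1^k}{(p-k)^2}$ in front of $\int_\Omega w^{p-k}\,dx$; the restriction $p>k+1$ only enters to ensure the positivity of this prefactor (so that the inequality is nontrivial). I do not expect any substantive obstacle here: the proof is essentially a routine testing argument, and the one genuinely novel ingredient is the comparison $v\simeq w$ supplied by Lemma \ref{lemm:Estvw}, without which the $v$-weight inside $\gamma(v)u$ could not be absorbed into powers of $w$ and thus could not be coupled with the gradient term produced by $-\Delta w$.
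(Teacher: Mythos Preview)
Your proof is correct and follows essentially the same approach as the paper's: test \eqref{eqn:parabw} against $pw^{p-1}$, use the comparison $v^{-k}\geq d_1^k w^{-k}$ from Lemma \ref{lemm:Estvw} together with $u=w-\Delta w$, integrate by parts, and then weaken the coefficient in front of $\int_\Omega w^{p-k}\,dx$ via $(p-k-2)^2\geq 0$. The presentation and all key steps match the paper's proof line by line.
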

\begin{proof}
Multiplying (\ref{eqn:parabw}) by $w^{p-1}$ and integrating over $\Omega$, we have that 
\begin{eqnarray}\label{directcal}
 \int_\Omega w_t w^{p-1} \,dx + \int_\Omega \frac{u}{v^k} w^{p-1} \,dx & = & 
\int_\Omega \left\{(1-\Delta_N)^{-1} \frac{u}{v^k}\right\} w^{p-1}\,dx.
\end{eqnarray}
In view of Lemma \ref{lemm:Estvw}, \eqref{w-eq} and integration by parts, it follows
\begin{eqnarray*}
\nn
 \int_\Omega \frac{u}{v^k} w^{p-1}\, dx 
 & \geq & d_1^k \int_\Omega \frac{u}{w^k} w^{p-1}\, dx \\
 \nn
& = & d_1^k \int_\Omega (-\Delta w + w) w^{p-k-1}\, dx \\
\nn
& = & \frac{4(p-k-1) d_1^k}{(p-k)^2} \int_\Omega |\nabla w^{\frac{p-k}{2}}|^2\, dx 
+ d_1^k \int_\Omega w^{p-k}\, dx \\
& \geq & \frac{4(p-k-1) d_1^k}{(p-k)^2} 
\int_\Omega \left( |\nabla w^{\frac{p-k}{2}}|^2 + w^{p-k}\right)\, dx,
\end{eqnarray*}
where we used
$$
 \frac{4(p-k-1)}{(p-k)^2} \leq 1 
 \quad\Longleftrightarrow \quad 
 (p-k-2)^2\geq 0.
$$
Combining the above with \eqref{directcal}, we complete the proof.
\end{proof}

%
\begin{rema}\label{remA-2}
The similar result to Lemma \ref{lem:preLp} holds true when we assume {\bf (A1)$^\prime$} instead of {\bf (A1)}. 
When we assume {\bf (A1)} it follows 
\begin{eqnarray*}
 \int_\Omega w_t w^{p-1} \,dx + \int_\Omega u \gamma(v) w^{p-1} \,dx & \leq & 
\Gamma_{max} \int_\Omega \left\{(1-\Delta_N)^{-1} \frac{u}{v^k}\right\} w^{p-1}\,dx.
\end{eqnarray*}
instead of \eqref{directcal}.  
Since 
$$
\int_\Omega u \gamma(v) w^{p-1} \,dx  \geq \Gamma_{min} \io \frac{u}{v_k}w^{p-1},
$$
by proceeding the same lines we have
 \begin{equation*}
 \frac{d}{dt} \int_\Omega w^p\, dx + \Gamma_{min} \frac{4p(p-k-1) d_1^k}{(p-k)^2} 
\int_\Omega \left( |\nabla w^{\frac{p-k}{2}}|^2 + w^{p-k}\right)\, dx
   \leq \Gamma_{max} p\int_\Omega \left\{ (1-\Delta_N)^{-1} \frac{u}{v^k} \right\} w^{p-1} \, dx.
\end{equation*}
for $p>k+1$.
\end{rema}

Here we recall the Sobolev inequality:
\begin{equation}
\label{eqn:sobevI}
 \|f\|_{L^{\frac{2n}{n-2}}}^2 \leq K_{Sob} \left( \|\nabla f\|_{L^2}^2 + \|f\|_{L^2}^2 \right)\qquad f \in H^1(\Omega)
\end{equation}
with some constant $K_{Sob}>0$.

\begin{lemm}\label{lem:auxLp}
Let $\ell$ be a positive constant satisfying $\ell >\frac{n-2}{2}k-1$.  
The following inequality holds:
 \begin{eqnarray*}
\frac{d}{dt} \int_\Omega w^{\ell+k+1}\, dx 
+ \frac{2\ell(\ell+k+1) d_1^k}{(\ell+1)^2K_{Sob}} 
|\Omega|^{-\frac{\ell+1}{\ell+k+1}+\frac{n-2}{n}} 
\left( \int_\Omega w^{\ell+k+1}\, dx \right)^{\frac{\ell+1}{\ell+k+1}}  
\leq {\bf I},
\end{eqnarray*}
where
\begin{eqnarray*}
{\bf I}:= (\ell+k+1) \int_\Omega \left\{(1-\Delta_N)^{-1} \frac{u}{v^k}\right\} w^{\ell+k}dx
-\frac{2\ell(\ell+k+1) d_1^k}{(\ell+1)^2K_{Sob}} 
\left( \io w^{(\ell+1)\frac{n}{n-2}}\,dx \right)^\frac{n-2}{n}.
\end{eqnarray*}
\end{lemm}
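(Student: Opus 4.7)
The plan is to derive Lemma \ref{lem:auxLp} as an immediate consequence of Lemma \ref{lem:preLp}, the Sobolev inequality \eqref{eqn:sobevI}, and one Hölder interpolation on the bounded domain $\Omega$. First I would apply Lemma \ref{lem:preLp} with the choice $p := \ell+k+1$; since $\ell$ is assumed positive, the hypothesis $p > k+1$ is automatically satisfied, and the resulting estimate takes the form
$$\frac{d}{dt}\int_\Omega w^{\ell+k+1}\,dx + \frac{4\ell(\ell+k+1)d_1^k}{(\ell+1)^2}\int_\Omega\bigl(|\nabla w^{(\ell+1)/2}|^2 + w^{\ell+1}\bigr)\,dx \leq (\ell+k+1)\int_\Omega\Bigl\{(1-\Delta_N)^{-1}\frac{u}{v^k}\Bigr\}w^{\ell+k}\,dx.$$
The right-hand side already agrees with the first summand of $\mathbf{I}$, so everything reduces to extracting the two dissipation terms on the left of the claim from the single dissipation quantity above.

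The heart of the argument is a splitting of the dissipation coefficient as $\frac{4}{(\ell+1)^2} = \frac{2}{(\ell+1)^2} + \frac{2}{(\ell+1)^2}$, combined with the Sobolev inequality \eqref{eqn:sobevI} applied to $f = w^{(\ell+1)/2}$:
$$\int_\Omega\bigl(|\nabla w^{(\ell+1)/2}|^2 + w^{\ell+1}\bigr)\,dx \;\geq\; \frac{1}{K_{Sob}}\Bigl(\int_\Omega w^{(\ell+1)n/(n-2)}\,dx\Bigr)^{(n-2)/n}.$$
The first half, after this Sobolev replacement, produces precisely the term $\frac{2\ell(\ell+k+1)d_1^k}{(\ell+1)^2 K_{Sob}}\bigl(\int_\Omega w^{(\ell+1)n/(n-2)}\bigr)^{(n-2)/n}$ which, when transferred to the right-hand side, supplies the negative Sobolev contribution present in $\mathbf{I}$.

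For the remaining half I would again perform the same Sobolev replacement and then invoke Hölder's inequality on the finite-measure set $\Omega$. Here the structural hypothesis $\ell > \frac{n-2}{2}k - 1$ enters decisively: a short calculation shows that it is equivalent to $\frac{(\ell+1)n}{n-2} > \ell + k + 1$, so that Hölder yields the interpolation
$$\Bigl(\int_\Omega w^{\ell+k+1}\,dx\Bigr)^{(\ell+1)/(\ell+k+1)} \leq |\Omega|^{\frac{\ell+1}{\ell+k+1} - \frac{n-2}{n}}\Bigl(\int_\Omega w^{(\ell+1)n/(n-2)}\,dx\Bigr)^{(n-2)/n}.$$
Rearranging this inequality and combining with the Sobolev estimate on the second half of the dissipation gives the coercive $L^{\ell+k+1}$ term with exactly the constant $\frac{2\ell(\ell+k+1)d_1^k}{(\ell+1)^2 K_{Sob}}|\Omega|^{-\frac{\ell+1}{\ell+k+1}+\frac{n-2}{n}}$ stated in the lemma.

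I do not anticipate a substantive obstacle in this argument, as it is essentially a clean dissipation-splitting exercise. The single point requiring genuine care is the exponent book-keeping, and in particular verifying the identity
$$\frac{\ell+1}{\ell+k+1} - \frac{n-2}{n} = \frac{2(\ell+1) - (n-2)k}{n(\ell+k+1)},$$
which makes it transparent that this exponent is positive precisely under $\ell > \frac{n-2}{2}k - 1$; this ensures both that the Hölder inclusion runs in the correct direction and that the advertised power of $|\Omega|$ is a meaningful positive constant.
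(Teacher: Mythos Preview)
Your proposal is correct and follows essentially the same route as the paper: apply Lemma~\ref{lem:preLp} with $p=\ell+k+1$, use the Sobolev inequality \eqref{eqn:sobevI} with $f=w^{(\ell+1)/2}$, split the resulting dissipation coefficient $4$ as $2+2$, and use H\"older (valid precisely because $\ell>\frac{n-2}{2}k-1$ forces $\ell+k+1<(\ell+1)\frac{n}{n-2}$) to convert one half into the $L^{\ell+k+1}$ coercive term. The only cosmetic difference is the order of operations---the paper applies Sobolev to the full dissipation term first and then splits, whereas you split first---but the argument and all constants are identical.
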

\begin{proof}
It follows from the Sobolev inequality \eqref{eqn:sobevI} that
\begin{eqnarray}\label{est_diffusion2}
\left( \io w^{(\ell+1)\frac{n}{n-2}}\,dx \right)^\frac{n-2}{n}
\leq K_{Sob} \io  \left( |\nabla w^{\frac{\ell+1}{2}}|^2 + w^{\ell+1}\right)\, dx.
\end{eqnarray}
Combining Lemma \ref{lem:preLp} with $p=\ell+k+1 > k+1$ and \eqref{est_diffusion2}, 
we have
 \begin{eqnarray}
 \label{eqn:intwl+kI}
&&\frac{d}{dt} \int_\Omega w^{\ell+k+1}\, dx + \frac{4\ell(\ell+k+1) d_1^k}{(\ell+1)^2K_{Sob}} 
\left( \io w^{(\ell+1)\frac{n}{n-2}}\,dx \right)^\frac{n-2}{n} \nonumber \\
&& \qquad\qquad \leq (\ell+k+1) \int_\Omega \left\{(1-\Delta_N)^{-1} \frac{u}{v^k}\right\} w^{\ell+k}dx.
\end{eqnarray}
Since $\ell$ satisfies
$$\ell + k+1 < (\ell +1)\frac{n}{n-2},$$
it follows from the H\"older inequality that
\begin{equation}
\label{eqn:HolderIV}
\left( \int_\Omega w^{\ell+k+1} dx \right)^{\frac{\ell+1}{\ell+k+1}} \leq 
 |\Omega|^{\frac{\ell+1}{\ell+k+1}-\frac{n-2}{n}} 
\left(\int_\Omega w^{(\ell+1) \frac{n}{n-2}} dx \right)^{\frac{n-2}{n}}.
\end{equation}
Hence by \eqref{eqn:intwl+kI} and \eqref{eqn:HolderIV} we have
 \begin{eqnarray*}
&&\frac{d}{dt} \int_\Omega w^{\ell+k+1}\, dx 
+ \frac{2\ell(\ell+k+1) d_1^k}{(\ell+1)^2K_{Sob}} 
 |\Omega|^{-\frac{\ell+1}{\ell+k+1}+\frac{n-2}{n}} 
\left( \int_\Omega w^{\ell+k+1}\, dx \right)^{\frac{\ell+1}{\ell+k+1}}  \\
&& 
+ \frac{2\ell(\ell+k+1) d_1^k}{(\ell+1)^2K_{Sob}} 
\left( \io w^{(\ell+1)\frac{n}{n-2}}\,dx \right)^\frac{n-2}{n}
\leq (\ell+k+1) \int_\Omega \left\{(1-\Delta_N)^{-1} \frac{u}{v^k}\right\} w^{\ell+k}\,dx,
\end{eqnarray*}
which is the desired inequality.
\end{proof}

The condition $k \in (0,\frac{n}{n-2})$ comes from the next lemma, which is based on the regularity estimate in \cite{BS}. 
\begin{lemm}\label{lem:key}
Let $\ell$ be a positive constant satisfying $\ell >\frac{n-2}{2}k-1$.  
For any $q \in [1,\frac{n}{n-2})$ and $\varepsilon>0$, 
there exists some $C>0$ such that
\begin{eqnarray*}
\io w^{\ell+q}\,dx \leq \varepsilon \left( \io w^{(\ell+1)\frac{n}{n-2}}\,dx \right)^\frac{n-2}{n} +C.
\end{eqnarray*}
\end{lemm}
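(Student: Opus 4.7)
The strategy is standard interpolation, leveraging one extra piece of information that the mass conservation law provides. Fix $q \in [1, n/(n-2))$ and $\varepsilon>0$, and set $r := (\ell+1)\frac{n}{n-2}$. The plan is to bound $\int_\Omega w^{\ell+q}$ by interpolating between an a priori uniform bound in some $L^s$ with $s < n/(n-2)$ and the top norm $L^r$ that appears on the right-hand side.

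First I would establish a uniform-in-time $L^s$ estimate for $w$ for every $s \in [1, n/(n-2))$. Indeed, Proposition \ref{prop:TGexist} gives $\|u(t)\|_{L^1(\Omega)} = \|u_0\|_{L^1(\Omega)}$, and since $w$ solves \eqref{w-eq}, the Brezis--Strauss type elliptic regularity estimate of \cite{BS} yields a constant $C_s>0$ such that $\|w(t)\|_{L^s(\Omega)} \leq C_s$ for all $t \geq 0$. Next, I would select an auxiliary exponent $s$ in the (non-empty, since $q < n/(n-2)$) interval $\bigl(\max\{1,\frac{n(q-1)}{2}\},\,\frac{n}{n-2}\bigr)$. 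If $\ell+q \leq s$, then Hölder's inequality combined with the $L^s$ bound immediately gives $\int_\Omega w^{\ell+q}\,dx \leq C$ and the claim follows with the constant absorbing everything. Otherwise one has $s < \ell+q < r$ (the upper bound holds since $r - (\ell+q) = \frac{2\ell}{n-2} + \frac{n}{n-2} - q > 0$), and I would invoke the three-exponent interpolation
\[
\|w\|_{L^{\ell+q}}^{\ell+q} \leq \|w\|_{L^s}^{(\ell+q)\theta}\,\|w\|_{L^r}^{(\ell+q)(1-\theta)}, \qquad \frac{1}{\ell+q} = \frac{\theta}{s} + \frac{1-\theta}{r},
\]
which gives $(\ell+q)(1-\theta) = r(\ell+q-s)/(r-s)$.

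The crucial algebraic step is then to verify that the choice $s > n(q-1)/2$ is equivalent to
\[
(\ell+q)(1-\theta) \;<\; \ell+1 \;=\; r\cdot \tfrac{n-2}{n},
\]
which is a direct computation after substituting $r = (\ell+1)n/(n-2)$ (the factors of $\ell+1$ cancel, reducing the inequality to $n(q-1) < 2s$). Combining the $L^s$ bound with the interpolation gives
\[
\int_\Omega w^{\ell+q}\,dx \;\leq\; C\,\Bigl(\int_\Omega w^r\,dx\Bigr)^{(\ell+q)(1-\theta)/r},
\]
and because the exponent on the right is strictly less than $(n-2)/n$, I would finish with Young's inequality using the conjugate exponents $\frac{(n-2)/n}{(\ell+q)(1-\theta)/r}$ and its dual, producing precisely $\varepsilon\bigl(\int_\Omega w^r\,dx\bigr)^{(n-2)/n} + C_\varepsilon$.

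I expect the only substantive obstacle to be the matching of the two windows on $s$: the Brezis--Strauss estimate produces uniform control exactly for $s < n/(n-2)$, while the interpolation requires $s > n(q-1)/2$; these two constraints overlap precisely under the threshold $q < n/(n-2)$. This is the point at which the hypothesis $k < n/(n-2)$ in \textbf{(A1)} is ultimately felt, and it accounts for the optimality of the exponent in the main theorem.
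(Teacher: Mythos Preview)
Your proof is correct and follows essentially the same approach as the paper: both obtain a uniform $L^s$ bound on $w$ for $s<\frac{n}{n-2}$ from mass conservation and the Brezis--Strauss elliptic estimate, interpolate between this and the top norm $L^{(\ell+1)\frac{n}{n-2}}$, and close with Young's inequality. The only cosmetic difference is that the paper organizes the interpolation as two successive H\"older steps (first passing from $w^{\ell+q}$ to $w^{\ell+r}$ with $q<r<\frac{n}{n-2}$, then splitting $w^{\ell+r}=w^{\ell+1}\cdot w^{r-1}$), whereas you use a single three-exponent interpolation; your auxiliary exponent $s\in(\frac{n(q-1)}{2},\frac{n}{n-2})$ plays exactly the role of the paper's $(r-1)\frac{n}{2}$.
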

\begin{proof}
For fixed $q \in [1,\frac{n}{n-2})$ we can choose $r>1$ such that
$$
q< r<\frac{n}{n-2}
$$
and by the H\"{o}lder inequality it follows
\begin{eqnarray}\label{H1}
\int_\Omega w^{\ell+q}\,dx
\leq |\Omega|^{\frac{r-q}{\ell+r}}
 \left( \io w^{\ell+r} \right)^{\frac{\ell+q}{\ell+r}}.
\end{eqnarray}
On the other hand, the H\"{o}lder inequality implies that
\begin{eqnarray}\label{H2}
\nn
\int_\Omega w^{\ell+r}\, dx 
& = & 
\int_\Omega w^{(\ell+1)+(r-1)}\, dx \\
& \leq & 
\left(\int_\Omega  w^{(\ell+1)\frac{n}{n-2}}\,dx \right)^{\frac{n-2}{n}}
\left( \int_\Omega w^{(r-1)\frac{n}{2}}\, dx \right)^{\frac{2}{n}}.
\end{eqnarray} 
In light of the mass conservation law, the regularity estimate (\cite{BS}) guarantees that
$$
\io w^{(r-1)\frac{n}{2}}\, dx \leq C
$$
with some $C>0$ since
$$
(r-1)\frac{n}{2} < \frac{n}{n-2}.
$$
Combining \eqref{H1} and \eqref{H2} we have
\begin{eqnarray*}
\int_\Omega w^{\ell+q}\,dx
\leq
 |\Omega|^{\frac{r-q}{\ell+r}}
C^{\frac{2}{n} \cdot \frac{\ell+q}{\ell+r}} \left(\int_\Omega  w^{(\ell+1)\frac{n}{n-2}}\,dx \right)^{\frac{n-2}{n}\cdot \frac{\ell+q}{\ell+r}},
\end{eqnarray*}
and since $\frac{\ell+q}{\ell+r}<1$ we can invoke the Young inequality to complete the proof. 
\end{proof}

\begin{proof}[Proof of Lemma \ref{lemm:boundvLp}.]
We divide the proof into three cases: {\bf (i)} $k\in(0,1)$, {\bf (ii)} $ k=1$, {\bf (iii)} $k\in (1, \frac{n}{n-2})$.

\medskip
{\bf (i)} We assume $k \in (0,1)$.  
Let $\ell$ be a positive constant satisfying $\ell >\frac{n-2}{2}k-1$.  
In view of Lemma \ref{lemm:bounduvk} it follows 
\begin{eqnarray*}
(\ell+k+1) \int_\Omega \left\{(1-\Delta_N)^{-1} \frac{u}{v^k}\right\} w^{\ell+k}\,dx
\leq
\frac{d_2^k(\ell+k+1)}{(1-k)} \int_\Omega w^{\ell+1}\,dx,
\end{eqnarray*}
and then combining the above with Lemma \ref{lem:auxLp} we have
 \begin{eqnarray} \label{eqn:intwl+kII}
 \frac{d}{dt} \int_\Omega w^{\ell+k+1}\, dx 
+ \frac{2\ell(\ell+k+1) d_1^k}{(\ell+1)^2K_{Sob}} 
 |\Omega|^{-\frac{\ell+1}{\ell+k+1}+\frac{n-2}{n}} 
\left( \int_\Omega w^{\ell+k+1} dx \right)^{\frac{\ell+1}{\ell+k+1}}  
\leq {\bf I}_1
\end{eqnarray}
with 
\begin{eqnarray*}
{\bf I}_1 = \frac{d_2^k(\ell+k+1)}{(1-k)} \int_\Omega w^{\ell+1}\,dx
- \frac{2\ell(\ell+k+1) d_1^k}{(\ell+1)^2K_{Sob}} 
\left( \io w^{(\ell+1)\frac{n}{n-2}}\,dx \right)^\frac{n-2}{n}.
\end{eqnarray*}
Since $1<\frac{n}{n-2}$ we can apply Lemma \ref{lem:key} with sufficiently small $\varepsilon>0$ satisfying 
$$
\frac{d_2^k(\ell+k+1)}{(1-k)} \cdot \varepsilon
 \leq \frac{2\ell(\ell+k+1) d_1^k}{(\ell+1)^2K_{Sob}} 
$$
to have
$$
{\bf I}_1 \leq C_1
$$
with some $C_1>0$. 
Therefore it follows
 \begin{eqnarray*} 
 \frac{d}{dt} \int_\Omega w^{\ell+k+1}\, dx 
+ \frac{2\ell(\ell+k+1) d_1^k}{(\ell+1)^2K_{Sob}} 
|\Omega|^{-\frac{\ell+1}{\ell+k+1}+\frac{n-2}{n}} 
\left( \int_\Omega w^{\ell+k+1} dx \right)^{\frac{\ell+1}{\ell+k+1}}  
\leq C_1.
\end{eqnarray*}
By the ODE comparison principle, 
we deduce from the above inequality that for all $t\geq0$,
\begin{eqnarray*}
&&\int_\Omega w(x,t)^{\ell+k+1} dx \\
&& \leq \max \left\{ \int_\Omega w(x,0)^{\ell+k+1} dx, \ C_1^{\frac{\ell+k+1}{\ell+1}} 
\left(
\frac{2\ell(\ell+k+1) d_1^k}{(\ell+1)^2K_{Sob}} 
 |\Omega|^{-\frac{\ell+1}{\ell+k+1}+\frac{n-2}{n}} 
 \right)^{-\frac{\ell+k+1}{\ell+1}} \right\}.
\end{eqnarray*}
This claims that the standard $L^p$-norm of the function $w$ is uniformly bounded in time for any sufficiently large $p$, which together with the boundedness of the domain $\Omega$, we imply that for any $p \geq 1$ the $L^p$-norm of the function $w$ is uniformly bounded in time. We deduce from this and Lemma \ref{lemm:Estvw} that for any $p \geq 1$ the standard $L^p$-norm of the function $v$ is also uniformly bounded in time.  
Hence our claim is shown in the case where $k\in (0,1)$. 

\medskip
{\bf (ii)} We assume $k =1$. 
Let $\ell$ be a positive constant satisfying $\ell >\frac{n-2}{2}-1$.  
 It follows from Lemma \ref{lemm:bounduvk} {\bf (ii)} that
\begin{eqnarray*}
(\ell+2) \int_\Omega \left\{(1-\Delta_N)^{-1} \frac{u}{v}\right\} w^{\ell+1}\,dx
 \leq (\ell+2)d_2 \int_\Omega  \left( 1 + \log \frac{w}{w_\ast} \right) w^{\ell+1}\,dx.
\end{eqnarray*}
Since there exists a positive constant $C$ such that 
\[
 1 + \log s \leq s^{\frac{1}{n}} + C \quad \mbox{ for } s \geq 0,
\]
we have
\[
 1 + \log \frac{w}{w_\ast} \leq \left( \frac{w}{w_\ast} \right)^{\frac{1}{n}} + C \left(\frac{w}{w_\ast} \right)^{\frac{1}{n}}.
\]
Hence
$$
(\ell+2) \int_\Omega \left\{(1-\Delta_N)^{-1} \frac{u}{v}\right\} w^{\ell+1}\,dx
\leq
(\ell+2)d_2 (1+C)\left( \frac{1}{w_\ast} \right)^{\frac{1}{n}} 
\int_\Omega w^{\ell+1+\frac{1}{n}}\,dx.
$$
Combining the above with Lemma \ref{lem:auxLp} we have
 \begin{eqnarray*}
 \frac{d}{dt} \int_\Omega w^{\ell+2}\, dx 
+ \frac{2\ell(\ell+2) d_1}{(\ell+1)^2K_{Sob}} 
 |\Omega|^{-\frac{\ell+1}{\ell+2}+\frac{n-2}{n}} 
\left( \int_\Omega w^{\ell+2} dx \right)^{\frac{\ell+1}{\ell+2}}  
\leq {\bf I}_2
\end{eqnarray*}
with 
\begin{eqnarray*}
{\bf I}_2 = (\ell+2)d_2 (1+C)\left( \frac{1}{w_\ast} \right)^{\frac{1}{n}} 
\int_\Omega   w^{\ell+1+\frac{1}{n}}\,dx
- \frac{2\ell(\ell+2) d_1}{(\ell+1)^2K_{Sob}} 
\left( \io w^{(\ell+1)\frac{n}{n-2}}\,dx \right)^\frac{n-2}{n}.
\end{eqnarray*}
Since $1+\frac{1}{n}<\frac{n}{n-2}$,  
we can apply Lemma \ref{lem:key} with sufficiently small $\varepsilon>0$ satisfying 
$$
(\ell+2)d_2 (1+C)\left( \frac{1}{w_\ast} \right)^{\frac{1}{n}}  \cdot \varepsilon
 \leq \frac{2\ell(\ell+2) d_1}{(\ell+1)^2K_{Sob}} 
$$
to have
$$
{\bf I}_2 \leq C_2
$$
with some $C_2>0$. 
Therefore we have
 \begin{eqnarray*} 
 \frac{d}{dt} \int_\Omega w^{\ell+2}\, dx 
+ \frac{2\ell(\ell+2) d_1}{(\ell+1)^2K_{Sob}} 
 |\Omega|^{-\frac{\ell+1}{\ell+2}+\frac{n-2}{n}} 
\left( \int_\Omega w^{\ell+2}\, dx \right)^{\frac{\ell+1}{\ell+2}}  
\leq C_2.
\end{eqnarray*}
We deduce from the above inequality that 
\begin{eqnarray*}
\int_\Omega w(x,t)^{\ell+2} dx & \leq & \max \left\{ \int_\Omega w(x,0)^{\ell+2} dx, \ C_2^{\frac{\ell+2}{\ell+1}} 
\left(
\frac{2\ell(\ell+2) d_1}{(\ell+1)^2K_{Sob}} 
 |\Omega|^{-\frac{\ell+1}{\ell+2}+\frac{n-2}{n}} 
\right)^{-\frac{\ell+2}{\ell+1}} \right\}.
\end{eqnarray*}
Proceeding the same discussion as in {\bf (i)}, 
we derive that for any $p \geq 1$ $L^p$-norms of the function $v$ and $w$ are uniformly bounded in time.  
Then our claim is shown in the case where $k=1$. 

\medskip
{\bf (iii)} We assume $k \in (1,\frac{n}{n-2})$.  
Let $\ell$ be a positive constant satisfying $\ell >\frac{n-2}{2}k-1$.  
Lemma \ref{lemm:bounduvk} {\bf (iii)} leads us to
\begin{eqnarray*}
(\ell+k+1) \int_\Omega \left\{(1-\Delta_N)^{-1} \frac{u}{v^k}\right\} w^{\ell+k}\,dx
&\leq& \frac{d_2^k (\ell+k+1) }{k-1}
\int_\Omega  \left( \frac{k}{w_\ast^{k-1}} - \frac{1}{w^{k-1}}\right)w^{\ell+k}\,dx \nonumber \\
&\leq&  \frac{d_2^k(\ell+k+1)k}{(k-1)w_\ast^{k-1}} \int_\Omega  w^{\ell+k}\,dx.
\end{eqnarray*}  
Combining the above with Lemma \ref{lem:auxLp} we have
 \begin{eqnarray*}
 \frac{d}{dt} \int_\Omega w^{\ell+k+1}\, dx 
+ \frac{2\ell(\ell+k+1) d_1^k}{(\ell+1)^2K_{Sob}} 
 |\Omega|^{-\frac{\ell+1}{\ell+k+1}+\frac{n-2}{n}} 
\left( \int_\Omega w^{\ell+k+1} \,dx \right)^{\frac{\ell+1}{\ell+k+1}}  
\leq {\bf I}_3
\end{eqnarray*}
with 
\begin{eqnarray*}
{\bf I}_3 =  \frac{d_2^k(\ell+k+1)k}{(k-1)w_\ast^{k-1}} \int_\Omega  w^{\ell+k}\,dx
- \frac{2\ell(\ell+k+1) d_1^k}{(\ell+1)^2K_{Sob}} 
\left( \io w^{(\ell+1)\frac{n}{n-2}}\,dx \right)^\frac{n-2}{n}.
\end{eqnarray*}
Since $k <\frac{n}{n-2}$, we can apply Lemma \ref{lem:key} with sufficiently small $\varepsilon>0$ to have
$$
{\bf I}_3 \leq C_3
$$
with some $C_3>0$. 
Therefore we have
 \begin{eqnarray*} 
 \frac{d}{dt} \int_\Omega w^{\ell+k+1}\, dx 
+ \frac{2\ell(\ell+k+1) d_1^k}{(\ell+1)^2K_{Sob}} 
 |\Omega|^{-\frac{\ell+1}{\ell+k+1}+\frac{n-2}{n}} 
\left( \int_\Omega w^{\ell+k+1} \,dx \right)^{\frac{\ell+1}{\ell+k+1}}  
\leq C_3,
\end{eqnarray*}
and then for $t\geq0$,
\begin{eqnarray*}
&&\int_\Omega w(x,t)^{\ell+k+1}\, dx \\
&& \leq \max \left\{ \int_\Omega w(x,0)^{\ell+k+1} \,dx, \ C_3^{\frac{\ell+k+1}{\ell+1}} 
\left(
 \frac{2\ell(\ell+k+1) d_1^k}{(\ell+1)^2K_{Sob}} 
 |\Omega|^{-\frac{\ell+1}{\ell+k+1}+\frac{n-2}{n}} 
\right)^{-\frac{\ell+k+1}{\ell+1}} \right\}.
\end{eqnarray*}
 Proceeding the same discussion as in {\bf (i)},  we complete the proof in the case {\bf (iii)}. 
\end{proof}

%
\begin{rema}\label{remA1-3}
Lemma \ref{lemm:boundvLp} holds true when we assume {\bf (A1)$^\prime$} instead of {\bf (A1)}.  
In light of Remark \ref{remA-2}, we need some modifications of choice of coefficients.
\end{rema}

The following proposition asserts boundedness of $v$ and $w$, which follows from the previous lemma.

\begin{prop}
 \label{prop:boundvLinfty}
The functions $v$ and $w$ are bounded in $\Omega \times [0,\infty)$, i.e.,
\[
v^\ast := \sup_{(x,t) \in \Omega \times [0,\infty)} v(x,t)< \infty
\quad \mbox{and}\quad
w^\ast := \sup_{(x,t) \in \Omega \times [0,\infty)} w(x,t) < \infty. 
\]
\end{prop}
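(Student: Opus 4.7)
The plan is a Moser--Alikakos-type iteration on the single evolution identity \eqref{eqn:parabw}, bootstrapping the $L^p$-bounds of Lemma \ref{lemm:boundvLp} up to an $L^\infty$ bound for $w$; the bound on $v$ will then follow immediately from the refined comparison $v \leq w/d_1$ of Lemma \ref{lemm:Estvw}. Testing \eqref{eqn:parabw} against $w^{p-1}$ for $p > k+1$ and using $v^{-k} \geq d_1^k w^{-k}$ (Lemma \ref{lemm:Estvw}) together with \eqref{w-eq}, one recovers the inequality already derived in Lemma \ref{lem:preLp}:
\begin{equation*}
\frac{d}{dt}\int_\Omega w^p\,dx + \frac{4p(p-k-1)d_1^k}{(p-k)^2}\int_\Omega\bigl(|\nabla w^{(p-k)/2}|^2 + w^{p-k}\bigr)dx \leq p\int_\Omega F\,w^{p-1}\,dx,
\end{equation*}
where $F := (1-\Delta_N)^{-1}(u/v^k)$. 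Lemma \ref{lemm:bounduvk} dominates $F$ pointwise by a sub-linear function of $w$ in all three regimes (by $Cw^{1-k}$ for $k<1$, by $C(1+\log w)$ for $k=1$, by a pure constant for $k>1$), so the right-hand side is controlled by $\int w^{p-\eta}$ for some $\eta > 0$, with Young's inequality used to absorb the logarithm in the critical case $k = 1$.

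Next I would apply the Sobolev embedding \eqref{eqn:sobevI} to $w^{(p-k)/2}$ and interpolate against a uniformly bounded lower $L^q$-norm of $w$ (supplied by Lemma \ref{lemm:boundvLp}) so as to close the inequality into a scalar ODE form
\begin{equation*}
\frac{d}{dt}M_p(t) + C_1(p)\,M_p(t)^{1+\delta} \leq C_2(p),\qquad M_p(t) := \int_\Omega w(t)^p\,dx,
\end{equation*}
for some $\delta = \delta(n,k,p) > 0$. The standard ODE comparison principle then gives $\sup_{t \geq 0} M_p(t) \leq \max\bigl\{M_p(0),\,(C_2(p)/C_1(p))^{1/\delta}\bigr\}$, i.e.\ a uniform-in-time $L^p$-bound whose constant can be tracked explicitly in $p$.

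The final and most delicate step is iteration along a geometric sequence $p_j \to \infty$: tracking the polynomial dependence of $C_1(p_j), C_2(p_j), \delta(p_j)$ on $p_j$, one has to show that $\|w(t)\|_{L^{p_j}}^{1/p_j}$ stays bounded uniformly in $j$. Letting $j \to \infty$ then yields the desired $w^\ast < \infty$, after which Lemma \ref{lemm:Estvw} immediately implies $v^\ast \leq w^\ast/d_1 < \infty$. The main obstacle is precisely this Moser bookkeeping: one must verify that $(C_2(p_j)/C_1(p_j))^{1/(p_j\delta)}$ remains uniformly bounded in $j$, and this is ultimately possible because $F$ is \emph{sub-linear} in $w$, ensuring that the Sobolev-driven diffusion term genuinely dominates the nonlinearity at every step of the iteration.
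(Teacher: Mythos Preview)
Your proposal is essentially the same strategy as the paper's: test \eqref{eqn:parabw} against $w^{p-1}$, invoke Lemma~\ref{lemm:bounduvk} to bound the right-hand side by $\int_\Omega w^{p-\eta}$, feed the diffusive term through the Sobolev inequality \eqref{eqn:sobevI}, and pass to $L^\infty$ by Moser iteration (exactly as in Lemma~\ref{lemm:odew} and the proof of Proposition~\ref{prop:boundvLinfty}). The only organisational difference is that the paper derives an ODE with \emph{linear} dissipation, $\frac{d}{dt}\int_\Omega w^p + c\int_\Omega w^p \le C(p)\bigl(\int_\Omega w^q\bigr)^2$ with $q\approx p/2$, and then runs the recursion $A(p_{j+1})\le C(p_j)A(p_j)^2$ along a doubling sequence, whereas you aim for an autonomous superlinear ODE at each fixed $p$ by interpolating against a single low norm; both bookkeeping schemes work, though your exponent in the ODE comparison should read $(C_2/C_1)^{1/(1+\delta)}$ rather than $(C_2/C_1)^{1/\delta}$.
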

In order to show Proposition \ref{prop:boundvLinfty}, we prepare two lemmas.
\begin{lemm}\label{lem:prelimi_iteration}
Let $\hat{p} > 2^\ast := \frac{2n}{n-2}$ and $\hat{k} \in [0, \frac{2}{n}\hat{p})$.  
Set
$$q:= \frac{1}{2}  \hat{p} + \frac{n\hat{k}}{4} \in \left(\frac{n}{n-2},\hat{p}\right).$$
Then for any $\eta \in C^1(\Omega)$ and $\varepsilon_1, \varepsilon_2>0$, it follows
\begin{eqnarray*}
\varepsilon_1 \int_\Omega \eta^{\frac{2\hat{p}}{\hat{p}-\hat{k}}} \,dx  \nonumber 
 \leq 
\varepsilon_2
  \int_\Omega \left( |\nabla \eta|^2 +\eta^2 \right)\, dx 
+ 
\varepsilon_1^{\frac{\hat{p}-\hat{k}}{\hat{p}-\hat{k}-\hat{p}\alpha}}
\left( \frac{K_{Sob}}{\varepsilon_2}\right)^{\frac{\hat{p}\alpha}{\hat{p}-\hat{k}-\hat{p}\alpha}}
\left( \int_\Omega \eta^{\frac{2q}{\hat{p}-\hat{k}}} dx \right)^2
\end{eqnarray*}
where $K_{Sob}$ is the constant in \eqref{eqn:sobevI} and
$$
\alpha :=\frac{(\hat{p}-\hat{k})(\hat{p}-q)}{\hat{p}(\hat{p}-\hat{k}-\frac{2q}{2^\ast})}  \in (0,1).
$$
\end{lemm}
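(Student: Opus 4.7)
The plan is to combine H\"older interpolation between $L^{s}$ and $L^{2^{*}}$, the Sobolev embedding \eqref{eqn:sobevI}, and a scaled Young's inequality. The role of the specific choice $q=\tfrac{1}{2}\hat{p}+\tfrac{n\hat{k}}{4}$ is precisely to tune the final exponent on $\int_{\Omega}\eta^{s}\,dx$ to be exactly $2$, which is what the right-hand side of the lemma demands.

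Setting $r:=\tfrac{2\hat{p}}{\hat{p}-\hat{k}}$ and $s:=\tfrac{2q}{\hat{p}-\hat{k}}$, the hypothesis $\hat{k}<\tfrac{2}{n}\hat{p}$ gives $r<2^{*}$ while $q<\hat{p}$ gives $s<r$, so $s<r<2^{*}$ and the interpolation identity $\tfrac{1}{r}=\tfrac{\theta}{2^{*}}+\tfrac{1-\theta}{s}$ determines a unique $\theta\in(0,1)$; a direct algebraic check identifies this $\theta$ with the $\alpha$ given in the statement. I would then apply the standard H\"older interpolation followed by \eqref{eqn:sobevI} to reach
$$
\varepsilon_{1}\int_{\Omega}\eta^{r}\,dx\;\leq\;\varepsilon_{1}\bigl(K_{Sob}E\bigr)^{r\alpha/2}\,M^{r(1-\alpha)/s},
$$
where $E:=\int_{\Omega}(|\nabla\eta|^{2}+\eta^{2})\,dx$ and $M:=\int_{\Omega}\eta^{s}\,dx$.

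The final ingredient is Young's inequality in the scaled form $XY\leq\tfrac{(\lambda X)^{a}}{a}+\tfrac{(Y/\lambda)^{a'}}{a'}$, applied with $X=(K_{Sob}E)^{r\alpha/2}$, $Y=\varepsilon_{1}M^{r(1-\alpha)/s}$, exponents $a=\tfrac{2}{r\alpha}$ and $a'=\tfrac{a}{a-1}$, and $\lambda>0$ chosen so that the first summand equals precisely $\varepsilon_{2}E$. The crucial algebraic identity is that the prescribed $q$ causes the $\hat{p},\hat{k}$-dependence to cancel in $r\alpha$: substituting $q=\tfrac{\hat{p}}{2}+\tfrac{n\hat{k}}{4}$ yields $r\alpha=\tfrac{2n}{n+2}$, hence $a=\tfrac{n+2}{n}$ and $a'=\tfrac{n+2}{2}$, and using $\tfrac{r\alpha}{2^{*}}+\tfrac{r(1-\alpha)}{s}=1$ one gets $\tfrac{r(1-\alpha)a'}{s}=\tfrac{4}{n+2}\cdot\tfrac{n+2}{2}=2$. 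Thus the factor of $M$ produced by Young's inequality is exactly $M^{2}$. Rewriting $a'$ and $a'/a$ via $2-r\alpha=\tfrac{2(\hat{p}-\hat{k}-\hat{p}\alpha)}{\hat{p}-\hat{k}}$ yields $a'=\tfrac{\hat{p}-\hat{k}}{\hat{p}-\hat{k}-\hat{p}\alpha}$ and $\tfrac{a'}{a}=\tfrac{\hat{p}\alpha}{\hat{p}-\hat{k}-\hat{p}\alpha}$, matching the stated exponents; the combinatorial prefactors $\tfrac{1}{a'}$ and $a^{-a'/a}$ are both at most $1$ and can be absorbed.

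The main obstacle is not analytic but bookkeeping: one must verify that the three stipulated constraints ($\hat{p}>2^{*}$, $\hat{k}<\tfrac{2}{n}\hat{p}$, and the specific form of $q$) cooperate to give $s<r<2^{*}$, $\alpha\in(0,1)$, $a,a'\in(1,\infty)$, and the crucial cancellation $r\alpha=\tfrac{2n}{n+2}$. Once these identities are in hand, the lemma follows from a three-line combination of interpolation, Sobolev, and Young, with no further analytic input required.
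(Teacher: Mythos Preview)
Your proposal is correct and follows essentially the same route as the paper: H\"older interpolation of $\int_\Omega \eta^{r}$ between $L^{2^\ast}$ and $L^{s}$ (with $r=\tfrac{2\hat p}{\hat p-\hat k}$, $s=\tfrac{2q}{\hat p-\hat k}$), then the Sobolev inequality \eqref{eqn:sobevI}, then Young's inequality. The only cosmetic difference is that you observe the clean identity $r\alpha=\tfrac{2n}{n+2}$, which makes the verification of the final exponent $2$ on $\int_\Omega \eta^{s}\,dx$ somewhat shorter than the paper's direct computation of $\frac{(\hat p-\hat k)2^\ast-2\hat p\alpha}{(\hat p-\hat k-\hat p\alpha)2^\ast}=2$.
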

\begin{proof}
We first confirm $\alpha \in (0,1)$.  
By the definition, $\hat{p}-\hat{k}>0$ and  $\hat{p}-q>0$. 
Moreover the direct calculation implies
\begin{eqnarray*}
\hat{p}-\hat{k} - \frac{2q}{2^\ast} &  = & \hat{p}-\hat{k}- \frac{n-2}{2n} \left( \hat{p} + \frac{n\hat{k}}{2} \right) 
\\
& = & \frac{n+2}{2n} \hat{p} -\frac{n+2}{4} \hat{k} \\
& > &0,
\end{eqnarray*} 
which implies $\alpha>0$. 
On the other hand,  it follows from $\hat{k} < \frac{2}{n}\hat{p}$ that
\begin{eqnarray*}
q - \left(\hat{k} + \frac{2q}{2^\ast}  \right)& = & \left(1-\frac{n-2}{n} \right)q - \hat{k} \\
& = & \frac{1}{n} \left( \hat{p} + \frac{n\hat{k}}{2} \right) - \hat{k} \\
& = & \frac{1}{n} \left( \hat{p} - \frac{n\hat{k}}{2} \right)  \\
& > &   0,
\end{eqnarray*}
and then
\begin{eqnarray}\label{calalpha}
\frac{\hat{p}-q}{\hat{p}-\hat{k}-\frac{2q}{2^\ast}} <1.
\end{eqnarray}
Therefore $\alpha <1$. 

Next we will show the desired inequality.  
Since \eqref{calalpha} implies
\begin{eqnarray}\label{parameter}
\nn
\frac{\hat{p}\alpha}{\hat{p}-\hat{k}} 
& = & \frac{\hat{p}}{\hat{p}-\hat{k}} \cdot  \frac{(\hat{p}-\hat{k})(\hat{p}-q)}{\hat{p}(\hat{p}-\hat{k} - \frac{2q}{2^\ast})} \\
\nn
& = &   \frac{\hat{p}-q}{\hat{p}-\hat{k} - \frac{2q}{2^\ast}}\\
&<&1,
\end{eqnarray}
it follows
\begin{eqnarray*}
\frac{2\hat{p}\alpha}{\hat{p}-\hat{k}} < 2 < 2^\ast.
\end{eqnarray*}
Hence we can invoke the H\"older inequality to see that 
\begin{eqnarray*}
\int_\Omega \eta^{\frac{2\hat{p}}{\hat{p}-\hat{k}}} \,dx 
& = & \int_\Omega \eta^{\frac{2\hat{p}\alpha}{\hat{p}-\hat{k}}} \eta^{\frac{2\hat{p}(1-\alpha)}{\hat{p}-\hat{k}}} dx \\
& \leq & \left( \int_\Omega \eta^{2^\ast} dx \right)^{\frac{2\hat{p}\alpha}{(\hat{p}-\hat{k})2^\ast}}  
\left( \int_\Omega \eta^{\frac{2\hat{p}(1-\alpha)2^\ast}{(\hat{p}-\hat{k})2^\ast -2\hat{p}\alpha}} dx \right)^{\frac{(\hat{p}-\hat{k})2^\ast -2\hat{p}\alpha}{(\hat{p}-\hat{k})2^\ast}}.
\end{eqnarray*}
Here the direct calculation guarantees
\begin{eqnarray*}
\frac{2\hat{p}(1-\alpha)2^\ast}{(\hat{p}-\hat{k})2^\ast -2\hat{p}\alpha} 
& =& \frac{2\hat{p}(1-\alpha)2^\ast}{(\hat{p}-\hat{k})2^\ast -2\hat{p} \frac{(\hat{p}-\hat{k})(\hat{p}-q)}{\hat{p}(\hat{p}-\hat{k}- 2q/2^\ast)}} \\
& =& \frac{2\hat{p}2^\ast \left(1-\frac{(\hat{p}-\hat{k})(\hat{p}-q)}{\hat{p}(\hat{p}-\hat{k}-2q/2^\ast )} \right)}{(\hat{p}-\hat{k})\left(2^\ast -\frac{2(\hat{p}-q)}{\hat{p}-\hat{k}-2q/2^\ast} \right)} \\
& =& \frac{2\cdot 2^\ast}{\hat{p}-\hat{k}} \cdot \frac{\hat{p}(\hat{p}-\hat{k}-2q/2^\ast )-(\hat{p}-\hat{k})(\hat{p}-q)}{2^\ast(\hat{p}-\hat{k}-2q/2^\ast )-2(\hat{p}-q)}  \\
& =& \frac{2\cdot 2^\ast}{\hat{p}-\hat{k}} \cdot \frac{q(\hat{p}-\hat{k})-2\hat{p}q/2^\ast}{(2^\ast-2)\hat{p}-2^\ast\hat{k}} \\
& = & \frac{2q}{\hat{p}-\hat{k}},
\end{eqnarray*}
and by applying the Sobolev inequality \eqref{eqn:sobevI} we have
\begin{eqnarray*}
\int_\Omega \eta^{\frac{2\hat{p}}{\hat{p}-\hat{k}}} \,dx 
& \leq & \left( \int_\Omega \eta^{2^\ast} dx \right)^{\frac{2\hat{p}\alpha}{(\hat{p}-\hat{k})2^\ast}}  
\left( \int_\Omega \eta^{\frac{2q}{\hat{p}-\hat{k}}} \,dx \right)^{\frac{(\hat{p}-\hat{k})2^\ast -2\hat{p}\alpha}{(\hat{p}-\hat{k})2^\ast}} \\
& \leq & K_{Sob}^{\frac{\hat{p}\alpha}{\hat{p}-\hat{k}}}
 \left( \int_\Omega \left( |\nabla \eta|^2 +\eta^2 \right) \,dx \right)^{\frac{\hat{p}\alpha}{\hat{p}-\hat{k}}}  
\left( \int_\Omega \eta^{\frac{2q}{\hat{p}-\hat{k}}} \,dx \right)^{\frac{(\hat{p}-\hat{k})2^\ast -2\hat{p}\alpha}{(\hat{p}-\hat{k})2^\ast}}.
\end{eqnarray*}
Due to \eqref{parameter} we can check
$$
\frac{\hat{p}-\hat{k}}{\hat{p}\alpha}>1\quad\mbox{and}\quad
\frac{\hat{p}-\hat{k}}{\hat{p}-\hat{k}-\hat{p}\alpha} >1, 
$$
and by the Young inequality it follows 
\begin{eqnarray*}
&&\varepsilon_1 \int_\Omega \eta^{\frac{2\hat{p}}{\hat{p}-\hat{k}}} \,dx  \nonumber \\
& \leq & 
\varepsilon_2
  \int_\Omega \left( |\nabla \eta|^2 +\eta^2 \right)\, dx \\
&&
+ \frac{\hat{p}-\hat{k}-\hat{p}\alpha}{\hat{p}-\hat{k}} 
\left( \frac{\hat{p}-\hat{k}}{\hat{p}\alpha}  \right)^{-\frac{\hat{p}\alpha}{\hat{p}-\hat{k}-\hat{p}\alpha}}
\varepsilon_1^{\frac{\hat{p}-\hat{k}}{\hat{p}-\hat{k}-\hat{p}\alpha}}
\left( \frac{K_{Sob}}{\varepsilon_2}\right)^{\frac{\hat{p}\alpha}{\hat{p}-\hat{k}-\hat{p}\alpha}}
\left( \int_\Omega \eta^{\frac{2q}{\hat{p}-\hat{k}}} dx \right)^{\frac{(\hat{p}-\hat{k})2^\ast - 2\hat{p}\alpha}{(\hat{p}-\hat{k}-\hat{p}\alpha)2^\ast}}\\
&\leq& 
\varepsilon_2
  \int_\Omega \left( |\nabla \eta|^2 +\eta^2 \right)\, dx 
+ 
\varepsilon_1^{\frac{\hat{p}-\hat{k}}{\hat{p}-\hat{k}-\hat{p}\alpha}}
\left( \frac{K_{Sob}}{\varepsilon_2}\right)^{\frac{\hat{p}\alpha}{\hat{p}-\hat{k}-\hat{p}\alpha}}
\left( \int_\Omega \eta^{\frac{2q}{\hat{p}-\hat{k}}} dx \right)^{\frac{(\hat{p}-\hat{k})2^\ast - 2\hat{p}\alpha}{(\hat{p}-\hat{k}-\hat{p}\alpha)2^\ast}}
\end{eqnarray*}
for any $\varepsilon_1, \varepsilon_2>0$. Finally, we can directly check
$$
\frac{(\hat{p}-\hat{k})2^\ast - 2\hat{p}\alpha}{(\hat{p}-\hat{k}-\hat{p}\alpha)2^\ast}=2
$$
since
\begin{eqnarray*}
\frac{(\hat{p}-\hat{k})2^\ast - 2\hat{p}\alpha}{(\hat{p}-\hat{k}-\hat{p}\alpha)2^\ast} 
& = & 
\frac{ (\hat{p}-\hat{k})2^\ast - 2\hat{p} \frac{(\hat{p}-\hat{k})(\hat{p}-q)}{\hat{p}(\hat{p}-\hat{k}-2q/2^\ast)}}{(\hat{p}-\hat{k})2^\ast - \hat{p}2^\ast \frac{(\hat{p}-\hat{k})(\hat{p}-q)}{\hat{p}(\hat{p}-\hat{k}-2q/2^\ast)}  }  \\
& = & 
\frac{2^\ast - 2 \frac{(\hat{p}-q)}{(\hat{p}-\hat{k}-2q/2^\ast)}}{2^\ast - 2^\ast \frac{(\hat{p}-q)}{(\hat{p}-\hat{k}-2q/2^\ast)}  } \\
& = & 
\frac{2^\ast(\hat{p}-\hat{k}-2q/2^\ast) - 2(\hat{p}-q)}{2^\ast(\hat{p}-\hat{k}-2q/2^\ast) - 2^\ast (\hat{p}-q)} \\
& = & 
\frac{2^\ast(\hat{p}-\hat{k}) - 2\hat{p}}{2^\ast(q-\hat{k}) - 2q} \\
\end{eqnarray*}
and 
\begin{eqnarray*}
\left( 2^\ast (\hat{p}-\hat{k}) - 2\hat{p} \right) - 2\left(2^\ast(q-\hat{k}) - 2q \right)
 & = & 
(\hat{p}-2q+\hat{k})2^\ast - 2(\hat{p}-2q ) \\
& = & \frac{4}{n-2} (\hat{p}-2q) + \hat{k} \frac{2n}{n-2} \\ 
& = & \frac{4}{n-2} \left(- \frac{n\hat{k}}{2} \right) + \hat{k} \frac{2n}{n-2} \\
& = & 0.
\end{eqnarray*}
We complete the proof.
\end{proof}

\begin{lemm}
\label{lemm:odew}
Let $p > \frac{2n}{n-2}+1$ and
$$
M := \sup_{t \geq 0} \left(\int_\Omega w(x,t)^{\frac{nk}{2}} dx \right)^{\frac{2}{n}} +1.
$$
\begin{enumerate}
\item[{\bf (i)}] If $k \in (0,1)$, the following inequality holds:
\begin{eqnarray*}
&&\frac{d}{dt} \int_\Omega w^p dx + \frac{d_1^k}{K_{Sob}M} \int_\Omega w^p \,dx\\ 
 & \leq & 
p^{\frac{p-k}{p-k-(p-k)\alpha}}
\left( \frac{d_2^k}{1-k} \right)^{\frac{p-k}{p-k-(p-k)\alpha}}
\left(\frac{K_{Sob}}{d_1^k}\right)^{\frac{(p-k)\alpha}{p-k-(p-k)\alpha}}
\left( \int_\Omega w^q\, dx \right)^2,
\end{eqnarray*}
where 
\[
q := \frac{1}{2}(p-k)
\quad \mbox{and}\quad
\alpha : = \frac{p-k-q}{p-k-\frac{2q}{2^\ast}}.  
\]
\item[{\bf (ii)}] If $k=1$, the following inequality holds:
\begin{eqnarray*}
&&\frac{d}{dt} \int_\Omega w^p dx + \frac{d_1}{K_{Sob}M} \int_\Omega w^p \,dx\\ 
 & \leq & 
p^{\frac{p-1}{p-1-(p-\frac{n-4}{n-2})\alpha}}
\left( d_2 C \right)^{\frac{p-1}{p-1-(p-\frac{n-4}{n-2})\alpha}}
\left(\frac{K_{Sob}}{d_1}\right)^{\frac{(p-\frac{n-4}{n-2})\alpha}{p-1-(p-\frac{n-4}{n-2})\alpha}}
\left( \int_\Omega w^q\, dx \right)^2,
\end{eqnarray*}
where $C>0$ is some constant depending on $n$ and $w_\ast$, and  
\[
q := \frac{p}{2}-1
\quad \mbox{and}\quad
\alpha := \frac{(p-1)(p-\frac{n-4}{n-2}-q)}{(p-\frac{n-4}{n-2})(p-1-\frac{2q}{2^\ast})}.  
\]
\item[{\bf (iii)}] If $k \in (1,\frac{n}{n-2})$, the following inequality holds:
\begin{eqnarray*}
&&\frac{d}{dt} \int_\Omega w^p dx + \frac{d_1^k}{K_{Sob}M} \int_\Omega w^p \,dx\\ 
 & \leq & 
p^{\frac{p-k}{p-k-(p-1)\alpha}}
\left(\frac{d_2^k k}{(k-1)w_\ast^{k-1}} \right)^{\frac{p-k}{p-k-(p-1)\alpha}}
\left(\frac{K_{Sob}}{d_1^k}\right)^{\frac{(p-1)\alpha}{p-k-(p-1)\alpha}}
\left( \int_\Omega w^q\, dx \right)^2,
\end{eqnarray*}
where 
\[
q := \frac{1}{2}(p-1) + \frac{n}{4}(k-1)
\quad \mbox{and}\quad
\alpha := \frac{(p-k)(p-1-q)}{(p-1)(p-k-\frac{2q}{2^\ast})}.  
\]
\end{enumerate}
\end{lemm}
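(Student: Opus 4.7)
The plan is to convert the energy inequality of Lemma \ref{lem:preLp} into the announced Moser-type differential inequality by (a) extracting the linear coercive term on the left via a Sobolev $+$ H\"older chain, and (b) absorbing the source term against a residual piece of the dissipation using Lemma \ref{lem:prelimi_iteration}. Setting $\eta := w^{(p-k)/2}$, from Lemma \ref{lem:preLp} combined with Lemma \ref{lemm:bounduvk} I obtain
\[
\frac{d}{dt}\int_\Omega w^p\,dx + D\int_\Omega(|\nabla\eta|^2+\eta^2)\,dx \leq p\int_\Omega S\,w^{p-1}\,dx,
\]
where $D=\frac{4p(p-k-1)d_1^k}{(p-k)^2}$ and $S$ equals $\frac{d_2^k}{1-k}w^{1-k}$, $d_2(1+\log(w/w_\ast))$, or $\frac{d_2^k k}{(k-1)w_\ast^{k-1}}$ in cases (i), (ii), (iii) respectively.

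To produce the linear coercive term, the Sobolev inequality \eqref{eqn:sobevI} applied to $\eta$ and H\"older's inequality with exponents $(\frac{n}{n-2},\frac{n}{2})$, together with the defining bound of $M$, give
\[
\int_\Omega w^p\,dx \leq \left(\int_\Omega w^{(p-k)n/(n-2)}\,dx\right)^{(n-2)/n}\left(\int_\Omega w^{nk/2}\,dx\right)^{2/n} \leq M\,K_{Sob}\int_\Omega(|\nabla\eta|^2+\eta^2)\,dx.
\]
Writing $s:=p-k$, an elementary computation gives $D/d_1^k = 4(s+k)(s-1)/s^2 \geq 2$ under the hypotheses $p > \frac{2n}{n-2}+1$ and $k\in(0,\frac{n}{n-2})$; splitting $D$ into two equal halves of size $d_1^k\int_\Omega(|\nabla\eta|^2+\eta^2)\,dx$ thus produces the coercive term $\frac{d_1^k}{K_{Sob} M}\int_\Omega w^p\,dx$ on the left and leaves a buffer of the same size for absorbing the source.

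The source $p\int_\Omega S\,w^{p-1}\,dx$ is then absorbed into the buffer through Lemma \ref{lem:prelimi_iteration} applied with tailored parameters $(\hat p,\hat k)$. In case (i), $(\hat p,\hat k)=(p-k,0)$ is direct and yields $\tilde q=\hat p/2=(p-k)/2=q$ and $\alpha=n/(n+2)$; in case (iii), $(\hat p,\hat k)=(p-1,k-1)$ yields $\tilde q=(p-1)/2+n(k-1)/4=q$. For case (ii) the logarithm is first converted via the elementary bound $1+\log s\leq Cs^{2/(n-2)}$ for $s\geq 1$ (applicable since $w\geq w_\ast$), turning the source into a multiple of $\int_\Omega w^{p-(n-4)/(n-2)}\,dx$, after which Lemma \ref{lem:prelimi_iteration} is applied with $(\hat p,\hat k)=(p-(n-4)/(n-2),2/(n-2))$. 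In each case, choosing $\varepsilon_1$ equal to the prefactor of the source and $\varepsilon_2=d_1^k$ makes the $\varepsilon_2$-term cancel against the buffer, and tracking the Young constants recovers the announced coefficient.

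The main obstacle is the precise exponent matching in case (ii): the logarithm has to be replaced by a power with the specific exponent $2/(n-2)$, and the output $\tilde q$ from Lemma \ref{lem:prelimi_iteration} must be reconciled with the announced $q=p/2-1$ through a final H\"older repacking of the resulting integral. Apart from this bookkeeping, the argument reduces to a careful, if lengthy, tracking of the Young constants generated by the interpolation lemma.
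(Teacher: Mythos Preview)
Your proposal is correct and follows essentially the same route as the paper: start from Lemma~\ref{lem:preLp}, bound the source via Lemma~\ref{lemm:bounduvk}, verify $\frac{4p(p-k-1)}{(p-k)^2}>2$ under the hypotheses, split the dissipation in half, convert one half into the coercive term $\frac{d_1^k}{K_{Sob}M}\int_\Omega w^p$ via H\"older and Sobolev, and absorb the source against the other half through Lemma~\ref{lem:prelimi_iteration} with exactly the parameter choices you list (the paper writes $\hat p=p-1+\tfrac{2}{n-2}$ in case~(ii), which is your $p-\tfrac{n-4}{n-2}$).

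The only point worth flagging is the one you already spotted: in case~(ii) the $q$ produced by Lemma~\ref{lem:prelimi_iteration} is $\tfrac{p}{2}+\tfrac{2}{n-2}$, not the announced $\tfrac{p}{2}-1$; the paper's proof in fact stops at the former value without any repacking, so this appears to be a typo in the statement rather than a step you are missing. Since the lemma is only used to feed a Moser iteration (Proposition~\ref{prop:boundvLinfty}) where any $q$ of the form $\tfrac{p}{2}+O(1)$ works equally well, the discrepancy is harmless.
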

\begin{proof}
Let $p>\frac{2n}{n-2}+1$.  We first recall Lemma \ref{lem:preLp}:
\begin{equation*}
 \frac{d}{dt} \int_\Omega w^p dx + \frac{4p(p-k-1)d_1^k}{(p-k)^2} \int_\Omega \left(|\nabla w^{\frac{p-k}{2}}|^2 + w^{p-k} \right) dx
  \leq p\int_\Omega \left\{ (1-\Delta_N)^{-1} \frac{u}{v^k} \right\} w^{p-1}\,dx
\end{equation*}
and thus
\begin{equation}
\label{eqn:odineqwI}
 \frac{d}{dt} \int_\Omega w^p dx + 2d_1^k \int_\Omega \left(|\nabla w^{\frac{p-k}{2}}|^2 + w^{p-k} \right) dx
  \leq p\int_\Omega \left\{ (1-\Delta_N)^{-1} \frac{u}{v^k} \right\} w^{p-1}\,dx,
\end{equation}
since 
\[
\frac{4p(p-k-1)}{(p-k)^2} > 2 \Longleftrightarrow (p-1)^2-1> k^2
\quad \mbox{ for } p > \frac{2n}{n-2} + 1 \ \mbox{ and } \ k \in \left(0,\frac{n}{n-2}\right).
\]
Moreover it follows from the H\"older inequality and the Sobolev inequality \eqref{eqn:sobevI} that 
\begin{eqnarray}
\label{eqn:sobolevIV}
\int_\Omega w^p \,dx & \leq & \left( \int_\Omega w^{\frac{p-k}{2} \frac{2n}{n-2}} \,dx \right)^{\frac{n-2}{n}} 
\left( \int_\Omega w^{\frac{nk}{2}} dx \right)^{\frac{2}{n}} \nonumber \\
& \leq & K_{Sob} \left\{ \int_\Omega \left( |\nabla w^{\frac{p-k}{2}}|^2 + w^{p-k} \right) \,dx \right\} M.
\end{eqnarray}
Since Lemma \ref{lemm:boundvLp} guarantees boundedness of $M<\infty$, 
by \eqref{eqn:odineqwI} and \eqref{eqn:sobolevIV} we have
\begin{eqnarray}\label{eqn:odineqwbase}
\nn 
 \frac{d}{dt} \int_\Omega w^p \,dx 
 &+& \frac{d_1^k}{K_{Sob}M} \int_\Omega w^p \,dx \\
 &+& d_1^k \int_\Omega \left(|\nabla w^{\frac{p-k}{2}}|^2 + w^{p-k} \right) \,dx
  \leq p\int_\Omega \left\{ (1-\Delta_N)^{-1} \frac{u}{v^k} \right\} w^{p-1}\,dx.
\end{eqnarray}

\medskip
We will first show {\bf (iii)}. We assume that $k \in (1,\frac{n}{n-2})$.
Since Lemma \ref{lemm:bounduvk} implies
\begin{eqnarray*}
p\int_\Omega \left\{ (1-\Delta_N)^{-1} \frac{u}{v^k} \right\} w^{p-1}\,dx 
 \leq   \frac{p d_2^k k}{(k-1)w_\ast^{k-1}} \int_\Omega  w^{p-1}\,dx,
 \end{eqnarray*}
 we have
\begin{eqnarray}
\label{eqn:odineqwIII}
\nn 
 \frac{d}{dt} \int_\Omega w^p \,dx 
 &+&\frac{d_1^k}{K_{Sob} M}  \int_\Omega w^p \,dx \\
 &+& d_1^k \int_\Omega \left(|\nabla w^{\frac{p-k}{2}}|^2 + w^{p-k} \right) \,dx
  \leq  \frac{p d_2^k k}{(k-1)w_\ast^{k-1}} \int_\Omega  w^{p-1}\,dx.
\end{eqnarray}
Here we can apply Lemme \ref{lem:prelimi_iteration} with
\begin{eqnarray*}
\hat{p}&  =&p-1, \qquad \hat{k} =k-1,\quad
q =\frac{1}{2}  \hat{p} + \frac{n\hat{k}}{4} = \frac{1}{2}(p-1) + \frac{n}{4}(k-1),\\[5mm]\\ 
\eta &=& w^{\frac{\hat{p}-\hat{k}}{2}}=w^{\frac{p-k}{2}},\quad
\alpha =\frac{(\hat{p}-\hat{k})(\hat{p}-q)}{\hat{p}(\hat{p}-\hat{k}-\frac{2q}{2^\ast})} 
= \frac{(p-k)(p-1-q)}{(p-1)(p-k-\frac{2q}{2^\ast})},\\[5mm]
\varepsilon_1 &=&  \frac{p d_2^k k}{(k-1)w_\ast^{k-1}},\quad
\varepsilon_2=d_1^k,
\end{eqnarray*}
since we can check
$\hat{p} >  \frac{2n}{n-2}$ and $\hat{k} \in (0, \frac{2}{n}\hat{p})$
by $p> \frac{2n}{n-2} +1$ and $k \in (1, \frac{n}{n-2})$. 
Then it follows
\begin{eqnarray}
\label{eqn:odineqwIV}
 \frac{p d_2^k}{(k-1)w_\ast^{k-1}} \int_\Omega  w^{p-1} dx 
& \leq&  d_1^k
\left\{ \int_\Omega \left( |\nabla w^{\frac{p-k}{2}}|^2 + w^{p-k} \right) dx \right\} \nonumber \\
&&
+
\left(\frac{pd_2^k k}{(k-1)w_\ast^{k-1}} \right)^{\frac{\hat{p}-\hat{k}}{\hat{p}-\hat{k}-\hat{p}\alpha}}
\left(\frac{K_{Sob}}{d_1^k}\right)^{\frac{\hat{p}\alpha}{\hat{p}-\hat{k}-\hat{p}\alpha}}
\left( \int_\Omega w^q\, dx \right)^2
\end{eqnarray}
where we used
$$
\eta^{\frac{2\hat{p}}{\hat{p}-\hat{k}}} =w^{p-1},\quad
\eta^{\frac{2q}{\hat{p}-\hat{k}}} =w^{q}.
$$

It follows from \eqref{eqn:odineqwIII} and \eqref{eqn:odineqwIV} that
\begin{eqnarray*}
\frac{d}{dt} \int_\Omega w^p \,dx &+& \frac{d_1^k}{K_{Sob}M} \int_\Omega w^p \,dx\\ 
 & \leq & 
\left(\frac{pd_2^k k}{(k-1)w_\ast^{k-1}} \right)^{\frac{\hat{p}-\hat{k}}{\hat{p}-\hat{k}-\hat{p}\alpha}}
\left(\frac{K_{Sob}}{d_1^k}\right)^{\frac{\hat{p}\alpha}{\hat{p}-\hat{k}-\hat{p}\alpha}}
\left( \int_\Omega w^q\, dx \right)^2.
\end{eqnarray*}
Then our claim is shown in the case where $k \in (1,\frac{n}{n-2})$.
 
 \medskip
{\bf (ii)}. We assume that $k=1$. 
Lemma \ref{lemm:bounduvk} (ii) implies that 
$$
p\int_\Omega \left\{ (1-\Delta_N)^{-1} \frac{u}{v^k} \right\} w^{p-1}\,dx
\leq
p d_2 \int_\Omega  \left( 1 + \log \frac{w}{w_\ast} \right) w^{p-1}\,dx
$$
and also there exists a positive constant $C$ such that
\[
\int_\Omega \left( 1 + \log \frac{w}{w_\ast} \right) w^{p-1} dx
 \leq C \int_\Omega w^{p-1+\frac{2}{n-2}} \,dx.
\]
Combining the above with \eqref{eqn:odineqwbase} we have
\begin{eqnarray}
\label{eqn:odineqwV}
\nn 
 \frac{d}{dt} \int_\Omega w^p \,dx 
 &+& \frac{d_1}{K_{Sob}M} \int_\Omega w^p \,dx \\
 &+& d_1 \int_\Omega \left(|\nabla w^{\frac{p-1}{2}}|^2 + w^{p-1} \right) \,dx
 \leq  p d_2 C\int_\Omega w^{p-1+\frac{2}{n-2}} \,dx.
 \end{eqnarray}
Here we can apply Lemme \ref{lem:prelimi_iteration} with
\begin{eqnarray*}
\hat{p}&  =&p-1+\frac{2}{n-2},  \qquad \hat{k} =\frac{2}{n-2},\quad
q =\frac{1}{2}  \hat{p} + \frac{n\hat{k}}{4}
 = \frac{p}{2} + \frac{2}{n-2},\\[5mm]\\ 
\eta &=& w^{\frac{\hat{p}-\hat{k}}{2}}=w^{\frac{p-1}{2}},\quad
\alpha =\frac{(\hat{p}-\hat{k})(\hat{p}-q)}{\hat{p}(\hat{p}-\hat{k}-\frac{2q}{2^\ast})} 
= \frac{(p-1)(p-\frac{n-4}{n-2}-q)}{(p-\frac{n-4}{n-2})(p-1-\frac{2q}{2^\ast})},\\[5mm]
\varepsilon_1 &=& p d_2 C,\quad
\varepsilon_2=d_1,
\end{eqnarray*}
then the argument similar to that to establish our claim in the case where $k \in (1,n/(n-2))$ leads us to
\begin{eqnarray*}
\frac{d}{dt} \int_\Omega w^p \,dx &+& \frac{d_1}{K_{Sob}M} \int_\Omega w^p \,dx\\ 
 & \leq & 
\left( p d_2 C \right)^{\frac{\hat{p}-\hat{k}}{\hat{p}-\hat{k}-\hat{p}\alpha}}
\left(\frac{K_{Sob}}{d_1}\right)^{\frac{\hat{p}\alpha}{\hat{p}-\hat{k}-\hat{p}\alpha}}
\left( \int_\Omega w^q\, dx \right)^2.
\end{eqnarray*}
Our claim is shown in the case where $k=1$.

 \medskip
{\bf (i)}. We assume that $k \in (0,1)$. 
Combining \eqref{eqn:odineqwbase} with Lemma \ref{lemm:bounduvk} imply that 
\begin{eqnarray*}
 \frac{d}{dt} \int_\Omega w^p \,dx 
 &+& \frac{d_1^k}{K_{Sob}M} \int_\Omega w^p \,dx \\
 &+& d_1^k \int_\Omega \left(|\nabla w^{\frac{p-k}{2}}|^2 + w^{p-k} \right) \,dx
  \leq   \frac{p d_2^k}{(1-k)} \int_\Omega  w^{p-k} dx.
 \end{eqnarray*}
Here we can apply Lemme \ref{lem:prelimi_iteration} with
\begin{eqnarray*}
\hat{p}&  =&p-k, \qquad \hat{k} =0,\quad
q =\frac{1}{2}  \hat{p} + \frac{n\hat{k}}{4}
 =  \frac{p}{2} - \frac{k}{2},\\[5mm]\\ 
\eta &=& w^{\frac{\hat{p}-\hat{k}}{2}}=w^{\frac{p-k}{2}},\quad
\alpha =\frac{(\hat{p}-\hat{k})(\hat{p}-q)}{\hat{p}(\hat{p}-\hat{k}-\frac{2q}{2^\ast})} 
= \frac{p-k-q}{p-k-\frac{2q}{2^\ast}},\\[5mm]
\varepsilon_1 &=& \frac{p d_2^k}{(1-k)},\quad
\varepsilon_2=d_1^k,
\end{eqnarray*}
to have
\begin{eqnarray*}
&&\frac{d}{dt} \int_\Omega w^p dx + \frac{d_1^k}{K_{Sob}M} \int_\Omega w^p \,dx\\ 
 & \leq & 
\left( p \frac{d_2^k}{(1-k)} \right)^{\frac{\hat{p}-\hat{k}}{\hat{p}-\hat{k}-\hat{p}\alpha}}
\left(\frac{K_{Sob}}{d_1^k}\right)^{\frac{\hat{p}\alpha}{\hat{p}-\hat{k}-\hat{p}\alpha}}
\left( \int_\Omega w^q\, dx \right)^2.
\end{eqnarray*}
Then our claim is shown in the case where $k \in (0,1)$.
Thus the proof is complete.  
\end{proof}
 
 We are in the position to give the proof of Proposition \ref{prop:boundvLinfty}. 
\begin{proof}[Proof of Proposition \ref{prop:boundvLinfty}]
The boundedness of the function $w$ is obtained by an argument similar to that to establish \cite[Lemma 3.5]{FJ21} and Lemma \ref{lemm:odew}.
We first consider the case where $k \in (1,\frac{n}{n-2})$, we shall show boundedness of the function $w$. 
From (iii) of Lemma \ref{lemm:odew} and Lemma \ref{lemm:boundvLp} it follows that 
\begin{eqnarray*}
\label{eqn:boundp1}
\lefteqn{\sup_{t\geq 0} \left(\int_\Omega w(x,t)^p \,dx\right)} \nonumber \\
 & \leq & \int_\Omega w_0^p \,dx \\
 \nn
& +& \frac{K_{Sob}M}{d_1^k}
 p^{\frac{p-k}{p-k-(p-1)\alpha}}
\left(\frac{d_2^k k}{(k-1)w_\ast^{k-1}} \right)^{\frac{p-k}{p-k-(p-1)\alpha}}
\left(\frac{K_{Sob}}{d_1^k}\right)^{\frac{(p-1)\alpha}{p-k-(p-1)\alpha}}
\left\{ \sup_{t\geq 0} \left(\int_\Omega w(x,t)^{q} dx\right) \right\}^2 \nonumber \\
 & \leq & \frac{\|w_0\|_{L^\infty(\Omega)}^{1-\frac{n}{2}(k-1)}}{|\Omega|} \left(|\Omega|\|w_0\|_{L^\infty(\Omega)}^q \right)^2
\nonumber \\ 
&+& \frac{K_{Sob}M}{d_1^k}
 p^{\frac{p-k}{p-k-(p-1)\alpha}}
\left(\frac{d_2^k k}{(k-1)w_\ast^{k-1}} \right)^{\frac{p-k}{p-k-(p-1)\alpha}}
\left(\frac{K_{Sob}}{d_1^k}\right)^{\frac{(p-1)\alpha}{p-k-(p-1)\alpha}}
\left\{ \sup_{t\geq 0} \left(\int_\Omega w(x,t)^{q} dx\right) \right\}^2,
\end{eqnarray*}
where $p>\frac{2n}{n-2}+1$,
\[
q = \frac{1}{2}(p-1) + \frac{n}{4}(k-1)
\quad \mbox{and}\quad
\alpha = \frac{(p-k)(p-1-q)}{(p-1)(p-k-\frac{2q}{2^\ast})}.  
\]

Let us put 
\[
 D := \max \left\{ \frac{\|w_0\|_{L^\infty(\Omega)}^{1-\frac{n}{2}(k-1)}}{|\Omega|}, 
\frac{K_{Sob}M}{d_1^k},
\frac{d_2^k k}{(k-1)w_\ast^{k-1}},
\frac{K_{Sob}}{d_1^k}
  \right\}
\]
and
\[
 A(p) := \max \left\{ \sup_{t\geq 0} \left(\int_\Omega w(x,t)^p dx\right), |\Omega|\|w_0\|_{L^\infty(\Omega)}^p \right\}.
\]
Those imply that for any $p> \frac{2n}{n-2}+1$,
\begin{eqnarray}
\label{eqn:boundp2}
 A(p) & \leq & 2(1+D)^{1+\frac{(p-1)\alpha +(p-k)}{p-k-(p-1)\alpha}}
p^{\frac{p-k}{p-k-(p-1)\alpha}} A(q)^2.
\end{eqnarray}
Since we see that 
\[
 \lim_{p \rightarrow \infty} \alpha = \frac{1-\frac{1}{2}}{1-\frac{n-2}{2n}} = \frac{n}{n+2}, 
\]
\[
 \lim_{p \rightarrow \infty} \frac{(p-1)\alpha +(p-k)}{p-k-(p-1)\alpha} = \frac{\frac{n}{n+2} +1}{1-\frac{n}{n+2}} = n+1
\]
and
\[
 \lim_{p \rightarrow \infty} \frac{p-k}{p-k-(p-1)\alpha} = \frac{1}{1-\frac{n}{n+2}} = \frac{n+2}{2},
\]
there exists a positive constant $J_0$ such that for $j \geq J_0$
\begin{eqnarray}
\label{eqn:boundp3}
\nn
&& A\left(2^{j+1}  + \frac{n(k-1)}{2} - 1  \right) \\
 & \leq & 2(1+D)^{2(n+1)}
\left(2^{j+1}  +\frac{n(k-1)}{2}-1  \right)^{n} 
A\left(2^{j}  +\frac{n(k-1)}{2}- 1  \right)^2 \nonumber \\
& \leq &  2^{n+1} (1+D)^{2(n+1)}
2^{n(j+1)} A\left(2^{j} +\frac{n(k-1)}{2}-1  \right)^2,
\end{eqnarray}
where in the last line we used 
\begin{eqnarray*}
\left(2^{j+1}  +\frac{n(k-1)}{2}-1  \right)^{n}
&\leq& \left(2^{j+1}  +\frac{2}{n-2}  \right)^{n}\\
&\leq& \left(2 \cdot 2^{j+1}   \right)^{n} = 2^n \cdot 2^{n(j+1)}.
\end{eqnarray*}
We deduce from (\ref{eqn:boundp3}) that for any $J\in \mathbf{N}$,
\begin{eqnarray}
\label{eqn:boundp4}
 A\left(2^{J+J_0}  +\frac{n(k-1)}{2}-1 \right)
  & \leq & \Pi_{j=1}^J \left\{2^{n+1} (1+D)^{2(n+1)}
2^{n(j+J_0)} \right\}^{2^{J-j}} A\left(2^{J_0} +\frac{n(k-1)}{2}-1  \right)^{2^J}, \nonumber \\
&&
\end{eqnarray}
which together with 
\begin{eqnarray*}
 \Pi_{j=1}^J \left\{2^{n+1}  (1+D)^{2(n+1)}
2^{n(j+J_0)} \right\}^{\frac{2^{J-j}}{2^{J+J_0}}} 
& \leq & \left\{ 2^{(nJ_0+n+1)}  (1+D)^{2(n+1)}2^{2n}\right\}^{2^{-J_0}},
\end{eqnarray*}
we imply that 
\begin{eqnarray*}
&&\limsup_{J \rightarrow \infty} A\left(2^{J+J_0}+ \frac{n(k-1)}{2}- 1  \right)^{2^{-J-J_0}} \nonumber  \\
& \leq & \left\{2^{(nJ_0+1)}(1+D)^{2(n+1)}2^{2n}\right\}^{2^{-J_0}} 
A\left(2^{J_0} +\frac{n(k-1)}{2}-1  \right)^{2^{-J_0}}<\infty,
\end{eqnarray*}
and then
\begin{eqnarray}
\label{eqn:boundp5}
\nn
\sup_{t \geq 0}\|w(\cdot,t)\|_{L^\infty} 
&\leq& \limsup_{J \rightarrow \infty} A\left(2^{J+J_0}-\frac{n(k-1)}{2}+1\right)^{-(2^{J+J_0} + \frac{n(k-1)}{2}-1) }\\
&<&\infty.
\end{eqnarray}
Then we obtain the boundedness of the function $w$ in the case where $k \in (1,\frac{n}{n-2})$.
An argument similar to the one to establish (\ref{eqn:boundp5}) leads to the boundedness of the function $w$ in the case where $k=1$ and $k \in (0,1)$. Thus, we get boundedness of the function $w$ for $k \in (0,\frac{n}{n-2})$, from which together with Lemma  \ref{lemm:Estvw} we imply boundedness of the function $v$ for $k \in (0,\frac{n}{n-2})$. Therefore the proof is complete. 
\end{proof}
 
 %
\begin{rema}\label{remA1replaced}
Proposition \ref{prop:boundvLinfty} holds true when we assume {\bf (A1)$^\prime$} 
since the above proof is based on Lemma \ref{lemm:boundvLp} and Lemma \ref{lemm:odew}. 
As noted in Remark \ref{remA1-3}, Lemma \ref{lemm:boundvLp} holds true under {\bf (A1)$^\prime$}. 
As to Lemma \ref{lemm:odew}, Lemma \ref{lemm:bounduvk} and Lemma \ref{lem:preLp} play important roles in the proof. 
These lemmas also holds true when we assume {\bf (A1)$^\prime$} as noted in Remark \ref{remA-11} and Remark \ref{remA-2}. After modifying coefficients we can proceed the similar lines of the proof of Lemma \ref{lemm:odew}.
\end{rema}
%
%
 
 \section{Proof of Theorem  \ref{theo:TG}}
 \label{sectionproof}

Theorem \ref{theo:TG} will be shown by Proposition \ref{prop:boundvLinfty} and an argument similar to the proof of \cite[Theorem 1.1]{FSpreprint}. 
Hereafter we assume {\bf (A1)} (or {\bf (A1)$^\prime$}). 
Let $(u,v)$ be a classical solution of (\ref{eqn:kinemod}) in $\Omega \times (0,\infty)$. 
We first show that $v$ is uniformly H\"older continuous in $\overline{\Omega} \times [0,\infty)$.

\begin{lemm}
\label{lemm:Holderv} 
Then there exists some $\alpha \in (0,1)$ such that
$$v \in C^{\alpha, \frac{\alpha}{2}} ({\overline{\Omega}\times [0,\infty)}),$$
that is, there exist positive constants $\Lambda$ and $\alpha \in (0,1)$ such that
\begin{equation}
\label{eqn:unifholder}
 |v(x,t) - v(\tilde{x},\tilde{t})| \leq \Lambda (|x-\tilde{x}|^\alpha + |t-\tilde{t}|^{\alpha/2})
\end{equation} 
for any $(x,t)$ and $(\tilde{x},\tilde{t}) \in \overline{\Omega} \times [0,\infty)$.
\end{lemm}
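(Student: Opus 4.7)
The plan is to derive H\"older continuity of $v$ from the uniform boundedness of $v$ and $w$ obtained in Proposition~\ref{prop:boundvLinfty}, combined with parabolic regularity theory applied first to the auxiliary equation~\eqref{eqn:evolutionw} for $w$, and then to the second equation of~\eqref{eqn:kinemod}. The guiding idea is that, once $v$ is pinched between two positive constants, the coefficient $\gamma(v)=v^{-k}$ becomes uniformly elliptic and continuous, so that the $w$-equation can be used to upgrade the integrability of $u$; the heat-type equation for $v$ with source $u\in L^p$ then yields its H\"older regularity by classical theory.

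More concretely, combining Proposition~\ref{prop:boundvLinfty} with Lemma~\ref{lemm:Minvw} gives constants $\underline{\gamma},\overline{\gamma}>0$ such that $\underline{\gamma}\le\gamma(v)\le\overline{\gamma}$ on $\overline{\Omega}\times[0,\infty)$, and Lemma~\ref{lemm:bounduvk} together with the $L^\infty$-bound on $w$ yields that $(1-\Delta_N)^{-1}(\gamma(v)u)$ is uniformly bounded on $\Omega\times(0,\infty)$. I would then view~\eqref{eqn:evolutionw} as the linear non-divergence parabolic equation
\[
 w_t-\gamma(v)\Delta w+\gamma(v)w = (1-\Delta_N)^{-1}(\gamma(v)u)
\]
for $w$ under Neumann boundary conditions, with uniformly elliptic continuous coefficient and bounded right-hand side, and apply $L^p$-maximal regularity on unit parabolic cylinders $\Omega\times(t_0,t_0+1)$. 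A covering argument in time, exploiting the uniform $L^\infty$-bound on $w$, then yields $w_t,\nabla^2 w\in L^p(\Omega\times(t_0,t_0+1))$ with bounds independent of $t_0\ge 0$, for every $p\in(1,\infty)$. The elliptic identity $u=-\Delta w+w$ therefore furnishes uniform bounds for $u$ in $L^p(\Omega\times(t_0,t_0+1))$ for any $p<\infty$.

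Having placed $u$ in a sufficiently integrable space, I would invoke the standard parabolic $L^p$-H\"older estimate applied to $\tau v_t-\Delta v+v=u$ under Neumann boundary data, with $p>(n+2)/2$, together with the initial regularity $v_0\in W^{1,p_0}(\Omega)$, $p_0>n$, to obtain~\eqref{eqn:unifholder} for some $\alpha\in(0,1)$.

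The main obstacle will be the maximal-regularity step for the $w$-equation: although $\gamma(v)$ is continuous since $v$ is a classical solution, no quantitative modulus of continuity is a priori available. One must therefore rely on a version of parabolic $L^p$-theory valid for merely continuous (equivalently, VMO) coefficients, and then patch local estimates into uniform-in-time bounds by using the uniform $L^\infty$-bounds on $w$ and on $(1-\Delta_N)^{-1}(\gamma(v)u)$ from Lemma~\ref{lemm:bounduvk}.
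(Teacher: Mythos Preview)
Your proposal has a genuine circularity in the maximal-regularity step, and you yourself flag it. To run $L^p$-maximal regularity for $w_t-\gamma(v)\Delta w=\cdots$ on cylinders $\Omega\times(t_0,t_0+1)$ with constants \emph{independent of $t_0$}, you need a modulus of continuity (or a VMO modulus) for $\gamma(v)$ that is uniform in $t_0$. Mere continuity of the classical solution $v$ gives you such a modulus on each bounded interval, but with no control as $t_0\to\infty$; the resulting $W^{2,p}$ bounds on $w$, hence the $L^p$ bounds on $u=-\Delta w+w$, and finally the H\"older constant for $v$, may all degrade in time. Since the target of the lemma is precisely a \emph{uniform-in-time} H\"older bound~\eqref{eqn:unifholder}, this is not a cosmetic issue: your argument would only yield $v\in C^{\alpha,\alpha/2}(\overline\Omega\times[0,T])$ with $\alpha$ and $\Lambda$ possibly depending on $T$.

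The paper circumvents this in two moves. First, instead of $L^p$-maximal regularity it applies a Krylov--Safonov type H\"older estimate (\cite[Corollary 7.51]{Lieberman}) to~\eqref{eqn:evolutionw}: this requires only that $\gamma(v)$ be bounded between two positive constants and that the right-hand side be bounded---both of which follow from Proposition~\ref{prop:boundvLinfty} and the maximum principle---and it yields $w\in C^{\alpha,\alpha/2}(\overline\Omega\times[0,\infty))$ with uniform constants. Second, rather than trying to extract $u\in L^p$ from H\"older regularity of $w$ (which would fail), the paper introduces $z:=(1-\Delta_N)^{-1}v$, observes that $z$ solves the constant-coefficient heat equation $\tau z_t=\Delta z-z+w$ with H\"older source $w$, applies Schauder theory to get $z\in C^{2+\alpha,1+\alpha/2}$, and recovers $v=-\Delta z+z\in C^{\alpha,\alpha/2}$. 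Both steps use only the uniform $L^\infty$ bounds already established, so the constants are automatically uniform in time. If you want to salvage your route, the cleanest fix is to replace the maximal-regularity appeal by the Krylov--Safonov step for $w$, and then either follow the paper's $z$-trick or, having obtained a uniform H\"older modulus for $\gamma(v)$, legitimately run your $L^p$ argument thereafter.
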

\begin{proof}
As noted in Proposition \ref{prop:TGexist}, $(u,v,w) \in (C^{2,1}(\overline{\Omega}\times(0,\infty)))^3$ satisfies \eqref{eqn:kinemod} and \eqref{eqn:evolutionw} classically on $(0,\infty)$.  
Lemma \ref{lemm:Minvw} and Proposition \ref{prop:boundvLinfty} guarantee that 
\[
 \gamma (v^\ast) \leq \gamma (v) \leq \gamma (v_\ast) \quad \mbox{ in } \overline{\Omega} \times [0,\infty),
\]
and also it follows from the maximal principle and Proposition \ref{prop:boundvLinfty} that 
\begin{eqnarray*}
0\leq  (I-\Delta_N)^{-1}[\gamma(v)u] 
 &\leq& (I-\Delta_N)^{-1}[\gamma(v_\ast)u]\\ 
&=& \gamma(v_\ast) (I-\Delta_N)^{-1}[u]\\
&=& \gamma(v_\ast) w^\ast.
\end{eqnarray*}
Then applying the H\"older regularity estimate (\cite[Corollary 7.51]{Lieberman}) to \eqref{eqn:evolutionw}, we can find some $\alpha\in (0,1)$ such that 
$$w \in C^{\alpha, \frac{\alpha}{2}} ({\overline{\Omega}\times [0,\infty)}).$$
That is to say, the estimate guarantees constants $\tilde{\Lambda}$ and $\alpha \in (0,1)$ satisfying
\[
 |w(x,t) - w(\tilde{x},\tilde{t})| \leq \tilde{\Lambda} (|x-\tilde{x}|^\alpha + |t-\tilde{t}|^{\alpha/2})
\] 
for any $(x,t)$ and $(\tilde{x},\tilde{t}) \in \overline{\Omega} \times [0,\infty)$.

Let $z(x,t):= (1-\Delta_N)^{-1}v(x,t). $
By the definition of $w$ and $z$, it follows
\begin{equation}\label{key_eq2}
z_t = \Delta z -z +w,
\end{equation}
where $w \in C^{\alpha, \frac{\alpha}{2}} ({\overline{\Omega}\times [0,\infty)})$ as proved above. 
By applying the parabolic regularity estimate (\cite[Chapter IV, Theorem 5.3]{LSU})  to \eqref{key_eq2}, we have 
$$z \in C^{2+\alpha, 1+\frac{\alpha}{2}} ({\overline{\Omega}\times [0,\infty)}),$$
where we used the fact $z(0)$ satisfies the compatibility condition. 
Then, by the elliptic regularity estimate it follows
$$v \in C^{\alpha, \frac{\alpha}{2}} ({\overline{\Omega}\times [0,\infty)}).$$
In fact, for $N=0,1,2,\cdots$, let $I_N := (N,N+2)$ and let $v_{1N}$ and $v_{2N}$ be functions satisfying 
\begin{eqnarray*}
&&\frac{\partial}{\partial t} v_{1N} = \Delta v_{1N} - v_{1N} + u \quad \mbox{ in } \Omega \times I_N, \\
&&\frac{\partial }{\partial t} v_{2N} = \Delta v_{2N} - v_{2N} \quad \mbox{ in } \Omega \times I_N, \\
&& \frac{\partial v_{1N}}{\partial \nu} = \frac{\partial v_{2N}}{\partial \nu} = 0  \quad \mbox{ on } \partial\Omega \times I_N, \\
&& v_{1N}(\cdot,N) = 0, \ v_{2N}(\cdot,N)=v(\cdot,N) \quad \mbox{ in } \Omega. 
\end{eqnarray*}
Since the function $z_{1N} := (1-\Delta_N)^{-1} v_{1N}$ satisfies
\begin{eqnarray*}
&&\frac{\partial}{\partial t} z_{1N} = \Delta z_{1N} - z_{1N} + w \quad \mbox{ in } \Omega \times I_N, \\
&& \frac{\partial z_{1N}}{\partial \nu} = 0  \quad \mbox{ on } \partial\Omega \times I_N, \\
&& z_{1N}(\cdot,N) = 0 \quad \mbox{ in } \Omega, 
\end{eqnarray*}
the parabolic regularity estimate (\cite[Chapter IV, Theorem 5.3]{LSU})  to the function $z_{1N}$, we have that 
\[
 |v_{1N}(x,t) - v_{1N}(\tilde{x},\tilde{t})| \leq \Lambda_1 (|x-\tilde{x}|^\alpha + |t-\tilde{t}|^{\alpha/2})
\] 
for any $(x,t)$ and $(\tilde{x},\tilde{t}) \in \overline{\Omega} \times I_N$, where the constant $\Lambda_1$ is independent of $N$.
Moreover, the standard parabolic regularity estimate guarantees that 
\[
 |v_{2N}(x,t) - v_{2N}(\tilde{x},\tilde{t})| \leq \Lambda_2 (|x-\tilde{x}|^\alpha + |t-\tilde{t}|^{\alpha/2})
\]
for any $(x,t)$ and $(\tilde{x},\tilde{t}) \in \overline{\Omega} \times (N+\frac{1}{2},N+2)$, where the constant $\Lambda_2$ depends only on $v^\ast$. Then, we can find the desired estimate, since $v=v_{1N}+v_{2N}$ and $v \in C^{\alpha, \frac{\alpha}{2}} ({\overline{\Omega}\times [0,1]})$. 
We conclude the proof. 
\end{proof}
 
\begin{proof}[Proof of Theorem \ref{theo:TG}]
Define the operator
$$
A(t) \varphi := -\gamma (v(t)) (\Delta \varphi -\varphi)
$$
for $\varphi \in \{ \psi \in  W^{2, p}(\Omega) \,|\, \partial_\nu \psi = 0\ \mbox{on }\partial \Omega\}$ with $p\in (1,\infty)$. 
Since $\gamma (v) $ is H\"older continuous (Lemma \ref{lemm:Holderv}), 
the operator $-A(t)$ generates analytic semigroup in $L^p(\Omega)$ for any $p\in (1,\infty)$. 
For any $N=0,1,2,\cdots$, we can check that $w \in C([N,N+2]; L^p(\Omega)) \cap C^1((N,N+2); L^p(\Omega))$ is a solution of the evolution equation 
\[
\frac{d}{dt} w + A(t)w = F \quad t \in (N,N+2),
\]
where $F = (1-\Delta_N)^{-1} (\gamma (v)u)$. 
Since the elliptic regularity theorem implies $F \in C([0,\infty); L^p(\Omega))$ for any $p \in (1,\infty)$, 
in view of the abstract theory (\cite[Theorem 5.2.2]{Tanabe}), 
the solution $w$ can be represented by the integral equation: 
$$
w(t) = U(t,N) w(N) + \int_N^t U(t,s)F(s)\,ds
\qquad t \in [N,N+2],
$$
where $U(t,s)$ is the fundamental solution. 
Due to the fact that $w(N)$ and $F(s)$ satisfy the Neumann boundary condition, we can apply the estimate of fundamental solutions (\cite[Theorem 5.2.1]{Tanabe}) to have 
\begin{eqnarray*}
&&\| A(t)w(t)\|_{L^p(\Omega)} \\
&\leq & \| A(t) U(t,N) w(N) \|_{L^p(\Omega)}
 + \int_N^t \| A(t) U(t,s)(A(s))^{-1}A(s)F(s)\|_{L^p(\Omega)} \,ds\\
 &\leq & \frac{C(p,v_\ast,v^\ast)}{t-N} \|w(N) \|_{L^p(\Omega)}
 + C(p,v_\ast,v^\ast) \int_N^t  \| A(s)F(s)\|_{L^p(\Omega)} \,ds
\end{eqnarray*}
with some $C(p,v_\ast,v^\ast)>0$. Here, we notice that $C(p,v_\ast,v^\ast)$ is independent of $N$, since $\gamma(v)$ is uniformly H\"older continuous on $\overline{\Omega} \times [0,\infty)$. 
By the definition of $w$, it follows
\begin{eqnarray*}
\| A(t)w(t)\|_{L^p(\Omega)} 
&= & \| \gamma (v(\cdot,t)) (\Delta w(\cdot,t) -w(\cdot,t))\|_{L^p(\Omega)} \\
&=&
\| \gamma (v(\cdot,t))u(\cdot,t)\|_{L^p(\Omega)}.
\end{eqnarray*}
We also obtain for $s \in [N,N+2]$,
\begin{eqnarray*}
A(s)F(s) &=& 
-\gamma (v(\cdot,s)) (\Delta - 1)
\left[(1-\Delta_N)^{-1}(\gamma(v(\cdot,s))u(\cdot,s))\right]\\ 
&=&  (\gamma (v(\cdot,s))^2 u(\cdot,s).
\end{eqnarray*}
For any $p\in (1,\infty)$ it follows for $t\in (N,N+2]$,
$$
 \| u(t) \|_{L^p(\Omega)}  
 \leq 
 \frac{C^\prime(p,v_\ast,v^\ast)}{t-N} w^\ast |\Omega|^{1/p}
 + C^\prime(p,v_\ast,v^\ast) \int_N^t  \| u(s) \|_{L^p(\Omega)} \,ds
$$
with some $C^\prime(p,v_\ast,v^\ast)>0$.
Then, for any $\varepsilon \in (0,\frac{1}{2})$ and $N=1,2,3,\cdots$ we see that for $t \in (N+\varepsilon,N+1+\varepsilon)$,
\begin{eqnarray*}
 \| u(t) \|_{L^p(\Omega)}  
& \leq & 
 \frac{C^\prime(p,v_\ast,v^\ast)}{\varepsilon} w^\ast |\Omega|^{1/p}
 + C^\prime(p,v_\ast,v^\ast) \varepsilon \sup_{N < t < N+\varepsilon} \| u(t) \|_{L^p(\Omega)} \\
&& \qquad + \int_{N+\varepsilon}^t  \| u(s) \|_{L^p(\Omega)} \,ds
\end{eqnarray*}
 and then by Gronwall's lemma it follows
\begin{eqnarray*}
\sup_{N+\varepsilon < t < N+1+\varepsilon} \| u(t) \|_{L^p(\Omega)}  
 \leq 
 \frac{C^{\prime\prime}(p,v_\ast,v^\ast)}{\varepsilon}
 + C^{\prime\prime}(p,v_\ast,v^\ast) \varepsilon \sup_{N-1+\varepsilon < t < N+\varepsilon} \| u(t) \|_{L^p(\Omega)}
\end{eqnarray*}
with some $C^{\prime\prime}(p,v_\ast,v^\ast)>0$. 
Taking $\varepsilon>0$ such that $C^{\prime\prime}(p,v_\ast,v^\ast) \varepsilon \leq 1/2$, we can find a positive constant $C^{\prime\prime\prime}(p,v_\ast,v^\ast)$ such that
\[
 \sup_{t \geq 0} \|u(t)\|_{L^p(\Omega)} \leq C^{\prime\prime\prime}(p,v_\ast,v^\ast).
\]
We can pick up some $p>\frac{n}{2}$ in the above and by Moser's iteration argument (see \cite{Anh19}) it follows
$$
\sup_{t \geq 0} \| u(t)\|_{L^\infty(\Omega)} + \sup_{t \geq 0} \| v(t)\|_{L^\infty(\Omega)} < \infty.
$$
The proof is complete.  
\end{proof}

\section{Discussion}
\label{sec:discussion}
In this section we will discuss about the critical condition which distinguishes boundedness and unboundedness of solutions. 

We first recall the work \cite{YK2017} to compare the condition with Theorem \ref{theo:TG}. 
In \cite[Theorem 2.9]{YK2017} global existence and boundedness of solutions to \eqref{eqn:kinemod} with 
\begin{description}
 \item[(A3)] $\displaystyle \gamma (v) = \frac{c_0}{v^k}$
\end{description}
are established for $\tau=1$, any $k>0$ and sufficiently small $c_0>0$.  
Here we point out that this smallness of $c_0$ depends on the relaxation time $\tau>0$ and the size of initial data.  
Indeed, let $(u,v)$ be a solution of the system:
 \begin{align*}
\begin{cases}
	\displaystyle  u_t= \Delta \left(\frac{ u}{v^k} \right)
		&\mathrm{in}\ \Omega\times(0,\infty), \\[1mm]
	\tau v_t=\Delta v - v + u &\mathrm{in}\ \Omega\times(0,\infty).   
\end{cases}
 \end{align*}
 Putting $(u_\lambda(t), v_{\lambda}(t)):= 
 (\lambda u( \frac{c_0}{\lambda^{k}} t), \lambda v( \frac{c_0}{\lambda^{k}} t) )$ 
 with $\lambda>0$ and $c_0>0$,  
 we can check
 
$$
\left(u_{\lambda}\right)_t (t)
= \frac{c_0}{\lambda^{k-1}}  u_t \left( \frac{c_0}{\lambda^{k}} t\right)
=  \frac{c_0}{\lambda^{k-1}} \Delta \left(\frac{  u}{v^k} \right)\left( \frac{c_0}{\lambda^{k}} t\right)
=   \Delta \left(\frac{ c_0 u_{\lambda} (t)}{\left( v_{\lambda}(t) \right)^k} \right),
$$
and
$$
	\displaystyle	\frac{\tau \lambda^{k}}{c_0} \left(v_{\lambda}\right)_t (t)  =
		\frac{\tau \lambda^{k}}{c_0}  \cdot  \frac{c_0}{\lambda^{k-1}}  v_t \left( \frac{c_0}{\lambda^{k}} t\right) 
		= \lambda \cdot \tau  v_t  \left( \frac{c_0}{\lambda^{k}} t\right)
		= \Delta v_\lambda (t) -  v_\lambda (t) +  u_\lambda (t).
$$
Hence $(u_\lambda, v_{\lambda})$ satisfies
 \begin{align*}
\begin{cases}
	\displaystyle \left(u_{\lambda}\right)_t  = \Delta \left(\frac{ c_0 u_{\lambda}  }{\left(v_{\lambda}  \right)^k} \right)
		&\mathrm{in}\ \Omega\times(0,\infty), \\[1mm]
	\displaystyle	\frac{\tau \lambda^{k}}{c_0} \left(v_{\lambda}\right)_t   =\Delta v_{\lambda} - v_{\lambda} + u_{\lambda}&\mathrm{in}\ \Omega\times(0,\infty)
\end{cases}
 \end{align*}
 and
  $$
\|u_\lambda (0)\|_{L^1(\Omega)} = \lambda \|u_0\|_{L^1 (\Omega)}.
 $$
From the above scaling argument, smallness of $c_0 $ is equivalent to largeness of the relaxation time $\frac{\tau\lambda^k}{c_0}>0$ and also largeness of $\lambda^k$. 
 Especially for $k>1$, smallness of $c_0$ is equivalent to largeness of the initial data.  Actually, in the proof of \cite[Theorem 2.9]{YK2017} the restriction depends on the size of initial data (see \cite[p.109]{YK2017}; the restriction on $c_0$ depends on the lower bound of $v$, which depends on the initial mass).
On the other hand, we emphasize that Theorem \ref{theo:TG} holds true for any $\tau>0$ and any size of initial data if $k \in (0, \frac{n}{n-2})$. 
Therefore Theorem \ref{theo:TG}, \cite{FSpreprint} and \cite[Theorem 2.9]{YK2017} can be summarized as follows: let $\tau=1$ and $\gamma (v) = v^{-k}$. 
\begin{itemize}
\item For any $k>0$ and any regular initial data,  solutions of \eqref{eqn:kinemod} exist globally in time.
\item If $k \in (0, \frac{n}{n-2})$ then for any size of initial data the above global solution is uniformly bounded.
\item If $k \geq \frac{n}{n-2}$ then for large initial data the above global solution is uniformly bounded.  
\end{itemize}

Moreover we can take the similar observation given in \cite{FS2018}, in which 
the critical condition for global existence of the logarithmic Keller--Segel system is discussed.  When we consider the stationary state of the first equation
$$
0 = \Delta (u v^{-k})\qquad  \mathrm{in}\ \Omega\times(0,\infty),
$$
by the 0-Neumann boundary condition it follows
$$
u = C v^{k}
$$
with some $C>0$. Taking the mass conservation law into account, we have
$$
u = \|u_0\|_{L^1(\Omega)} \frac{v^k}{\io v^k\,dx}
$$
and then the system is reduced to the nonlinear heat equation
\begin{align*}
\begin{cases}
v_t = \Delta v - v + \|u_0\|_{L^1(\Omega)} \frac{v^k}{\io v^k\,dx} 
		&\mathrm{in}\ \Omega\times(0,\infty), \\[1mm]
\frac{\partial v }{\partial\nu} =0
&\mathrm{on}\ \partial \Omega\times(0,\infty).
\end{cases}
 \end{align*}
By the similar way in \cite[Theorem 44.5]{QS}, 
radially symmetric blowup solutions of the above equation can be constructed when $k>\frac{n}{n-2}$. 

Based on the above observations we give the following conjecture. 
\begin{conj}
Let $\tau=1$ and $\gamma (v) = v^{-k}$ with $k \geq \frac{n}{n-2}$.  
For some small initial data, the corresponding global solution of \eqref{eqn:kinemod} blows at infinite time, that is,
$$
\limsup_{t\to \infty}  \left( \|u(t)\|_{L^\infty(\Omega)} + \|v(t)\|_{L^{\infty}(\Omega)} \right)=\infty.
$$
\end{conj}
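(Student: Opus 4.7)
The plan is to realize rigorously the quasi-stationary reduction sketched in the discussion. For $k\geq n/(n-2)$, I would construct small initial data whose solution shadows a blowing-up solution of the nonlocal heat equation
\[
V_t = \Delta V - V + M\,\frac{V^k}{\int_\Omega V^k\,dx},\qquad \partial_\nu V = 0,
\]
with $M := \|u_0\|_{L^1(\Omega)}$, and then transfer the infinite-time blowup back to the full parabolic-parabolic system \eqref{eqn:kinemod} by means of the auxiliary equation \eqref{eqn:evolutionw}.

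\emph{Step 1: blowup for the reduced problem.} I would first prove that, in the radially symmetric setting on a ball, there exist small initial data $V_0>0$ for which the above nonlocal heat equation produces an unbounded solution when $k\geq n/(n-2)$. The nonlocal reaction is mass-preserving in the sense that $\int V_t = \int(\Delta V-V) + M$, so blowup must be in $L^\infty$ only, and the natural approach is the intersection-comparison / energy argument used in \cite[Theorem 44.5]{QS} for supercritical Neumann problems, after adapting it to the nonlocal normalization. Alternatively, one can study the stationary set of this reduced equation (solutions of $-\Delta V+V = MV^k/\int V^k$) and show, using the scaling symmetry highlighted at the end of Section~\ref{sec:discussion}, that radial ground states concentrate and become linearly unstable for $k\geq n/(n-2)$.

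\emph{Step 2: shadowing by the full system.} I would then pick $(u_0,v_0)$ in the basin predicted by Step~1, with $u_0$ close to $Mv_0^k/\int v_0^k$ so that the forcing term in \eqref{eqn:parabw} is already near its "quasi-stationary" value. The refined comparison $d_1 v\leq w\leq d_2 v$ (Lemma \ref{lemm:Estvw}) and the identity $w_t + \gamma(v)u = (1-\Delta_N)^{-1}(\gamma(v)u)$ imply that the spatial oscillation of $\gamma(v)u = u/v^k$ is damped by the smoothing operator $(1-\Delta_N)^{-1}$, hence $u/v^k$ tends to equalize spatially on the fast scale, forcing $u \approx C(t)v^k$ with $C(t)$ determined by mass conservation. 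Plugging this into the equation for $v$ yields the reduced equation of Step~1 up to a corrector controlled by $\|u/v^k - C(t)\|$, and a Gronwall-type comparison should allow one to propagate closeness on a long enough time interval to catch the infinite-time blowup.

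\emph{Main obstacles.} The serious difficulty is making the quasi-stationary reduction uniform in time, since the shadowing must persist as $t\to\infty$. Two specific issues stand out: one must maintain a strictly positive lower bound on $v$ (otherwise $\gamma(v)=v^{-k}$ is singular and the equalization argument collapses), and one must control the unstable direction along the whole trajectory, not only near the starting profile. A natural way forward is to work along the one-dimensional unstable manifold of a radial stationary state and show by a spectral/center-manifold analysis that the growing mode separates from zero in finite time, after which compactness of bounded orbits together with the classification of stationary solutions forces the trajectory to escape every $L^\infty$-ball, i.e. $\limsup_{t\to\infty}(\|u(t)\|_{L^\infty}+\|v(t)\|_{L^\infty})=\infty$. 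A complementary route would be to mimic the infinite-time blowup construction in \cite{FSpreprint} for $\gamma(v)=e^{-v}$, replacing the exponential by the power-law and using Lemma~\ref{lemm:Estvw} as the key a priori tool.
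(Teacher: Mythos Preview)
The statement you are attempting to prove is a \emph{conjecture} in the paper, not a theorem. The paper does not provide a proof; Section~\ref{sec:discussion} only offers heuristic evidence (the scaling argument and the reduction to the nonlocal heat equation), and then explicitly states the conjecture as an open problem. There is therefore no proof in the paper to compare your proposal against.

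As for the proposal itself: it is an honest outline of a possible strategy, and you correctly identify the two main obstacles. But neither step is actually carried out, and both are genuinely hard open problems. In Step~1, the reference to \cite[Theorem~44.5]{QS} concerns the \emph{local} equation $v_t=\Delta v-v+v^k$; the nonlocal normalization $V^k/\int V^k$ is scale-invariant and changes the variational structure completely, so the cited argument does not transfer in any direct way. In Step~2, the claim that $u/v^k$ ``tends to equalize spatially on the fast scale'' via \eqref{eqn:parabw} is not justified: the identity \eqref{eqn:parabw} gives a pointwise ODE for $w$ with forcing $(1-\Delta_N)^{-1}(\gamma(v)u)$, but there is no mechanism in it that drives $\gamma(v)u$ toward a constant, and the comparison $d_1v\le w\le d_2v$ does not control oscillations of $u/v^k$. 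The shadowing you need would have to be uniform on $[0,\infty)$ along an unbounded trajectory, which is precisely where such arguments typically break down. In short, your proposal is a reasonable research program, not a proof; the conjecture remains open, consistent with its status in the paper.
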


%
%
%
\noindent\textbf{Acknowledgments} \\
K. Fujie is supported by Japan Society for the Promotion of Science (Grant-in-Aid for Early-Career Scientists; No.\ 19K14576). 
T. Senba is supported by Japan Society for the Promotion of Science (Grant-in-Aid for Scientific Research(C); No.\ 18K03386)

\end{document}